\pdfoutput=1 
\documentclass[11pt]{article}
\usepackage[left=1in,top=1in,right=1in,bottom=1in,letterpaper]{geometry}

\usepackage[utf8]{inputenc} % allow utf-8 input
\usepackage[T1]{fontenc}    % use 8-bit T1 fonts
\usepackage[colorlinks,citecolor=blue,urlcolor=blue,pagebackref]{hyperref}       % hyperlinks
\hypersetup{
  colorlinks,
  linkcolor={blue!50!black},
  citecolor={blue!50!black},
  urlcolor={blue!80!black}
}
\usepackage{url}            % simple URL typesetting
\usepackage{booktabs}       % professional-quality tables
\usepackage{amsfonts}       % blackboard math symbols
\usepackage{nicefrac}       % compact symbols for 1/2, etc.
\usepackage{microtype}      % microtypography
\usepackage{xcolor}         % colors
\usepackage{color}
\usepackage{bm,multirow,xspace}
\usepackage{enumerate,mathtools}
\usepackage{graphicx}
\usepackage{epstopdf,amsmath}
\usepackage{bbm}
%\usepackage{subfigure}
%\graphicspath{{figures/}}
%\usepackage{graphicx}   % need for figures
%\usepackage{multirow}
%\usepackage{authblk}

\usepackage{amssymb}
\usepackage{mathrsfs,amsthm}
\usepackage{makecell}

\usepackage{subfigure}

\allowdisplaybreaks

\usepackage{amsmath}    % need for subequations
\usepackage{algorithm}
\usepackage{algpseudocode}
\usepackage{colortbl}
\usepackage[inline]{enumitem}

\usepackage{amsmath,color}

\usepackage{algpseudocode}
\usepackage{graphicx}
\usepackage{epstopdf}
\usepackage{pifont}% http://ctan.org/pkg/pifont

\usepackage{multirow}

\newcommand{\xmark}{\ding{55}}%
\allowdisplaybreaks
\newtheorem{definition}{Definition}

\DeclareMathOperator*{\argmax}{argmax}
\DeclareMathOperator*{\argmin}{argmin}
\DeclareMathOperator{\sign}{sign}
\newcommand{\abs}[1]{\left\lvert #1 \right\rvert}
\DeclareMathOperator*{\iid}{\texttt{iid}}
\newcommand{\norm}[1]{\left\lVert #1 \right\rVert}
\newcommand{\ceil}[1]{\lceil #1 \rceil}

%\newcommand{\note}[1]{{\footnotesize \color{red}#1}}
                         % such that
   % inner product

\newcommand{\expec}[2]{\mathbb{E}_{#2}\left[ #1 \right] }
\newcommand\numberthis{\addtocounter{equation}{1}\tag{\theequation}}  %number one line in align*

\def\fn[#1]#2{{f_{#1}\left(x_{#2}\right)}}

\newtheorem{lemma}{Lemma}[section]
\newtheorem{theorem}{Theorem}[section]

\newtheorem{proposition}{Proposition}[section]
\newtheorem{assumption}{Assumption}[section]
\newtheorem{remark}{Remark}

\newcommand{\inprod}[1]{\left\langle {#1}\right\rangle}

\newcommand{\kb}[1]{{\color{magenta}\bf[KB: #1]}}

\def\L{{Lipschitz }}

\def\exp{{\rm exp}}

\def\cA{{\cal A}}

\def\cB{{\cal B}}

\def\cD{{\cal D}}

\def\cW{{\cal W}}

\def\cI{{\cal I}}
\def\cF{{\cal F}}

\def\tt{{\tilde \theta}}
\def\ty{{\tilde y}}
\def\tz{{\tilde z}}

\newenvironment{talign*}
 {\csname align*\endcsname}
 {\endalign}

\title{Constrained Stochastic Nonconvex Optimization with State-dependent Markov Data}

\author{Abhishek Roy\thanks{Halıcıoğlu Data Science Institute, University of California, San Diego. \texttt{abroy@ucdavis.edu}.} 
\and Krishnakumar Balasubramanian\thanks{Department of Statistics, University of California, Davis. \texttt{kbala@ucdavis.edu}. } 
\and Saeed Ghadimi\thanks{Department of Management Sciences, University of Waterloo. \texttt{sghadimi@uwaterloo.ca}.}
}

\begin{document}

\maketitle

\begin{abstract}
We study stochastic optimization algorithms for constrained nonconvex stochastic optimization problems with Markovian data. In particular, we focus on the case when the transition kernel of the Markov chain is state-dependent. Such stochastic optimization problems arise in various machine learning problems including strategic classification and reinforcement learning. For this problem, we study both projection-based and projection-free algorithms. In both cases, we establish that the number of calls to the stochastic first-order oracle to obtain an appropriately defined $\epsilon$-stationary point is of the order $\mathcal{O}(1/\epsilon^{2.5})$. In the projection-free setting we additionally establish that the number of calls to the linear minimization oracle is of order $\mathcal{O}(1/\epsilon^{5.5})$. We also empirically demonstrate the performance of our algorithm on the problem of strategic classification with neural networks.
%\ar{do we include the homogeneous MC results here?}\kb{no need as the title is saying state-dependent}
\end{abstract}

\section{Introduction}
We consider the following stochastic optimization problem 
\begin{align}\label{eq:mainproblem}
    \argmin_{\theta\in\Theta}f(\theta)=\argmin_{\theta\in\Theta}\expec{F(\theta;x)}{},
\end{align}
where (\textrm{i}) the expectation is taken over the stationary distribution, $\pi_\theta$, of the random vector $x$, (\textrm{ii}) $F$ (and hence $f$) is a potentially non-convex function in $\theta$, and (\textrm{iii}) $\Theta$ is a compact and convex constraint set. Stochastic approximation algorithms for solving problem~\eqref{eq:mainproblem}, given an independent and identically distributed ($\iid$) data stream $\{x_k\}_k$ drawn from a distribution $\pi$, are well-studied. Such $\iid$ assumptions are commonly made in various machine learning and statistical problems including empirical risk minimization~\cite{shalev2014understanding}, sparse recovery~\cite{bach2012structured} and compressed sensing~\cite{foucart2013invitation,lan2020first}. We refer to~\cite{moulines2011non, agarwal2012information, rakhlin2012making, ghadimi2013stochastic,shamir2013stochastic, lan2016conditional, arjevani2019lower} for a partial list of non-asymptotic upper and lower bounds on the oracle complexity of widely-used stochastic approximation algorithms like the Stochastic Gradient Descent (SGD) and the Stochastic Conditional Gradient Algorithm, in the $\iid$ setting.

Our focus in this work is on the case when the data sequence $\{x_k\}_k$ is not necessarily $\iid$. Such non-$\iid$ settings arise frequently in several machine learning applications including but not limited to strategic classification~\cite{hardt2016strategic,cai2015optimum,mendler2020stochastic,li2022state} and reinforcement learning~\cite{bartlett1992learning, goldberg2013adaptive, zhang2021statistical,karimi2019non, qu2020finite}. While sample average approximation (or empirical risk minimization) with non-$\iid$ data is relatively well-understood (see, for example,~\cite{agarwal2012generalization,kuznetsov2017generalization, dagan2019learning, roy2021empirical} and references therein), a deeper understanding of the non-asymptotic oracle complexity of stochastic approximation algorithms for non-$\iid$ data is only now starting to emerge. 

Towards that, we establish non-asymptotic oracle complexity results for the stochastic conditional gradient algorithm for non-convex constrained stochastic optimization with Markovian data with potentially state-\emph{dependent} transition kernels. To establish our results, from a methodological point-of-view, we leverage the moving-average stochastic gradient estimation technique recently used in~\cite{zhang2020one,ghadimi2020single,xiao2022projection} in the context of constrained optimization with $\iid$ data. This technique avoids having to a use a mini-batch of samples in each iteration, which turns out to be crucial in the non-$\iid$ setup we consider. From a theoretical point-of-view, we assume the so-called drift conditions, a classical assumption in Markov Chain literature \cite{andrieu2005stability}. This ensures the existence of a solution to the Poisson equation associated with the underlying Markov chain~\cite{douc2018markov} which enables one to decompose the noise present in the stochastic gradient into three components: a martingale difference sequence, a time-decaying sequence, and a telescopic sum type sequence. The key idea of our paper is to use this decomposition to construct an auxiliary sequence of iterates with a time-decaying noise variance and show that these sequence of iterates are \textit{close} to the iterates of the original sequence produced by our algorithm. This novel technique in then used in combination with a merit-function based analysis to establish the oracle complexity results.
\subsection{Motivating Example}\label{sec:motivate}
Problems of the form in \eqref{eq:mainproblem} arise in various important applications, e.g., strategic classification, and reinforcement learning as mentioned above. Below we illustrate the motivation of this work through the example of strategic classification with adapted best response \cite{li2022state}. In strategic classification, there is a \textit{learner} whose task is to classify a given dataset which is collected from a set of \textit{agents}. Given the knowledge of the classifier, the agents can distort some of their personal features, in order to get classified in a predetermined target class. This scenario arises in various applications, e.g., spam email filtering, and credit score classification. Optimizing the classifier to classify such strategically modified data where the agents modify the data iteratively can be formulated as problem \eqref{eq:mainproblem}. 

Formally, let the classifier be $h(u,\theta)$ where $u\in\mathbb{R}^d$ is the feature and $\theta$ is the parameter to be optimized. $h(u;\cdot):\Theta\to\mathbb{R}$ is potentially nonconvex. Let the loss function be logistic loss which for a sample $x\coloneqq (u,y)$, where $y\in\{-1,1\}$ denotes the corresponding class, is given by,
\begin{align*}
	L(\theta;u,y)=\log\left(1+\exp\left(-h(u,\theta)\right)\right)+(1-y)h(u,\theta)/2.\numberthis\label{eq:logistic}
\end{align*} 	
We use $u_S$, and $u_{-S}$ to denote the parts of feature $u$ which are respectively strategically modifiable, and non-modifiable by the agents. Then the modified feature (the best response) $u_S'$ reported by  the agent is the solution to the following  optimization problem:
\begin{align*}
	u_S'=\argmax_{u_S} \left(h(u;\theta)-c(u_S,u_S')\right), \numberthis\label{eq:featureperturb}
\end{align*}
where $c(u_S,u_S')$ is the cost of modifying $u_S$ to $u_S'$. Let the agents iteratively learn $u_S'$ similar to \cite{li2022state}. Note that unlike \cite{li2022state}, where the authors deploy a logistic regression classifier and the closed form solution of the best response is readily known to the agents,  it may not be the case in general. In that case the agents have to possibly learn the best response $u_S'$ using some iterative optimization algorithm. For example, if the agents use Gradient Ascent then, at every iteration $k$, a set $\cI_k$ of  $n_1\leq M$ randomly chosen agents out of $M$ agents modify their features as:
\begin{align*}
	u_{S,i}^k=\begin{cases}
		u_{S,i}^{k-1}+\alpha\left(\nabla h(u_{S,i}^{k-1};\theta_{k})-\nabla c(u_{S,i}^{k-1},u_{S,i}^0)\right) & i\in \cI_k\\
		u_{S,i}^{k-1}& i\notin \cI_k,
	\end{cases}\numberthis\label{eq:feturemod}
\end{align*} 
where $\alpha>0$ is the stepsize. With a little abuse of notation, we use $\nabla h(u_{S,i}^{k-1};\theta)$  in \eqref{eq:feturemod} to denote  the fact that the gradient is with respect to $u_{S,i}^{k-1}$ while $u_{-S,i}$ remains unchanged. This introduces the state-dependent Markov chain dynamics in the training data. The objective function, analogous to $f(\theta)$ in \eqref{eq:mainproblem}, is $$\underset{\theta\in\Theta}{\min}\expec{L(\theta;x)}{\pi_\theta},$$ where $\pi_\theta$ is the stationary joint distribution of $x$, and $\Theta$ is a convex and compact set, e.g., sparsity inducing constraint $\norm{\theta}_1\leq R$ from some $R>0$. The loss evaluated at a single data point $x$, $L(\theta;x)$, is analogous to $F(\theta;x)$ in \eqref{eq:mainproblem}. \cite{drusvyatskiy2020stochastic}, and \cite{li2022state} study this problem theoretically and empirically respectively in an unconstrained strongly convex setting. Our results takes a step forward towards analyzing this problem in constrained nonconvex setting. We empirically show the performance of the stochastic conditional gradient algorithm on a strategic classification problem in Section~\ref{sec:exp}.

\subsection{Preliminaries and Main Contributions}
Before we present our main contributions, we introduce our convergence criterion. In constrained optimization literature, most commonly used convergence criteria are: \begin{enumerate*}[label=(\roman*)]
    \item \emph{Gradient Mapping} (GM), and \item \emph{Frank-Wolfe Gap} (FW-gap).
\end{enumerate*} 
The \emph{Gradient Mapping} at a point $\bar{\theta} \in \Theta$ is defined, for some $\beta>0$, as 
\begin{equation}\label{eq:definition-gradient-mapping}
    \mathcal{G}_{\Theta}(\bar{\theta}, \nabla f(\bar{\theta}), \beta) \coloneqq
    \beta \left(\bar{\theta} - \Pi_{\Theta}\left(\bar{\theta}- \frac{1}{\beta}\nabla f(\bar{\theta})\right)\right),
\end{equation}
where $\Pi_{\Theta}(x)$ denotes the orthogonal projection of the vector $x$ onto the set $\Theta$, i.e.,
\begin{align*}
    \Pi_{\Theta}\left(\bar{\theta}- \frac{1}{\beta}\nabla f(\bar{\theta})\right)=\underset{y\in\Theta}{\argmin}\left\lbrace \langle \nabla f(\bar\theta),y-\bar{\theta}\rangle+\frac{\beta}{2}\norm{y-\theta}_2^2\right\rbrace.
\end{align*}  
We will use $\Pi_\Theta (\bar{\theta},\nabla f(\bar{\theta}),\beta)$ to denote $\Pi_{\Theta}\left(\bar{\theta}- \nabla f(\bar{\theta})/\beta\right)$ when there is no confusion. 
Note that when $\Theta \equiv \mathbb{R}^d$ we have $\mathcal{G}_{\Theta}(\bar{\theta}, \nabla f(\bar{\theta}), \beta)=\nabla f(\bar{\theta})$. In other words, for constrained optimization gradient mapping plays an analogous role of the gradient for unconstrained optimization. The gradient mapping is a frequently used measure in the literature as a convergence criterion for nonconvex constrained optimization \cite{nesterov2018lectures}. We should emphasize here that although the gradient mapping cannot be computed in the stochastic setting, one can still use it as a convergence measure.

\cite{balasubramanian2022zeroth} shows that the above notion of convergence criterion is closely related to the so-called \emph{Frank-Wolfe Gap}. The FW-gap is defined as
\begin{equation}\label{eq:definition-fw-gap}
    g_{\Theta}(\bar{\theta}, \nabla f(\bar{\theta})) := \underset{y\in\Theta}{\max}~\langle \nabla f(\bar{\theta}) ,  \bar{\theta}-y \rangle.
\end{equation}
The following proposition from \cite{balasubramanian2022zeroth} establishes the relation between the GM and the FW-gap.
\begin{proposition}[\cite{balasubramanian2022zeroth}]
Let $g_{\Theta}(\cdot)$ be the Frank-Wolfe gap defined in \eqref{eq:definition-fw-gap} and $\mathcal{G}_{\Theta}(\cdot)$ be the gradient mapping defined in \eqref{eq:definition-gradient-mapping}. Then, we have
\begin{equation*}
    \| \mathcal{G}_{\Theta}(\bar{\theta}, \nabla f(\bar{\theta}), \beta)\|^2 \leq g_{\Theta}(\bar{\theta}, \nabla f(\bar{\theta})),\qquad \forall \bar{\theta}\in\Theta.
\end{equation*}
Moreover, under standard regularity assumption in smooth optimization (specifically, under Assumptions \ref{aspt:constraint} and  \ref{as:contdiff}), we have
\begin{equation*}
    g_{\Theta}(\bar{\theta}, \nabla f(\bar{\theta})) \leq \frac{L}{\beta} \norm{\mathcal{G}_{\Theta}(\bar{\theta}, \nabla f(\bar{\theta}), \beta)}_2. \numberthis\label{eq:fwgradmapbound}
\end{equation*}
\end{proposition}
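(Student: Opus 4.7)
The plan is to establish the two inequalities separately; both reduce to the first-order variational characterization of the Euclidean projection $\Pi_\Theta$. Throughout, I denote $y^\star := \Pi_\Theta(\bar\theta - \nabla f(\bar\theta)/\beta)$, so that by definition $\mathcal{G}_\Theta(\bar\theta, \nabla f(\bar\theta), \beta) = \beta(\bar\theta - y^\star)$.

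For the first inequality, I would start from the first-order optimality condition of the projection, namely
\[
\langle \nabla f(\bar\theta) + \beta(y^\star - \bar\theta),\, y - y^\star\rangle \geq 0 \qquad \forall\, y \in \Theta,
\]
and specialize to the feasible test point $y = \bar\theta$ to obtain $\beta\|\bar\theta - y^\star\|^2 \leq \langle \nabla f(\bar\theta),\, \bar\theta - y^\star\rangle$. Since $y^\star \in \Theta$ is itself feasible in the extremum defining $g_\Theta$, the right-hand side is in turn dominated by $g_\Theta(\bar\theta, \nabla f(\bar\theta))$ (read in its non-negative form). Multiplying through by $\beta$ converts $\beta^2\|\bar\theta - y^\star\|^2$ into $\|\mathcal{G}_\Theta\|^2$ and delivers the first inequality.

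For the second inequality, let $\hat y \in \Theta$ realize the extremum in the definition of $g_\Theta$. My plan is to introduce $\nabla f(y^\star)$ as a pivot and decompose
\[
g_\Theta(\bar\theta, \nabla f(\bar\theta)) = \langle \nabla f(\bar\theta) - \nabla f(y^\star),\, \bar\theta - \hat y\rangle + \langle \nabla f(y^\star),\, \bar\theta - \hat y\rangle.
\]
The first summand is controlled by Cauchy--Schwarz and $L$-smoothness of $f$ (Assumption~\ref{as:contdiff}), giving a bound proportional to $L\|\bar\theta - y^\star\|\cdot\|\bar\theta - \hat y\| = (L/\beta)\|\mathcal{G}_\Theta\|\cdot\|\bar\theta - \hat y\|$. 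The second summand is handled by rewriting the projection optimality as the statement that $y^\star$ minimizes $\langle \nabla f(\bar\theta) - \mathcal{G}_\Theta,\, \cdot\rangle$ over $\Theta$, which lets me re-express $\langle \nabla f(y^\star), \bar\theta - \hat y\rangle$ as a quantity involving $\mathcal{G}_\Theta$ plus a residual that is once more absorbed through $L$-smoothness and the compactness of $\Theta$ (Assumption~\ref{aspt:constraint}).

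The hard part will be the second inequality: arranging the above decomposition so that only the ratio $L/\beta$ survives as the prefactor of $\|\mathcal{G}_\Theta\|$, with no independent diameter-of-$\Theta$ factor leaking through. The first inequality is by comparison an almost mechanical consequence of testing the projection's optimality condition at the feasible point $\bar\theta$ itself.
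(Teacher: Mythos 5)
You should first note that the paper itself does not prove this proposition: it is quoted from \cite{balasubramanian2022zeroth}, so your attempt can only be judged on its own merits. Your argument for the first inequality is the standard one (test the optimality condition of the projection at the feasible point $y=\bar\theta$), but the bookkeeping does not come out as you claim: testing at $\bar\theta$ gives $\beta\norm{\bar\theta-y^\star}_2^2\leq\langle\nabla f(\bar\theta),\bar\theta-y^\star\rangle\leq g_\Theta(\bar\theta,\nabla f(\bar\theta))$ (nonnegative reading), and multiplying by $\beta$ yields $\norm{\mathcal{G}_\Theta}_2^2\leq\beta\, g_\Theta$, not $\norm{\mathcal{G}_\Theta}_2^2\leq g_\Theta$. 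This matches the displayed inequality only when $\beta\leq 1$ (e.g., the paper's choice $\beta=1$); a one-dimensional check ($\Theta=[0,1]$, $f(\theta)=2\theta$, $\bar\theta=1$, $\beta=2$) shows the factor $\beta$ cannot simply be dropped, so at minimum you should state this restriction rather than assert the factor "converts" cleanly.

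The genuine gap is in your plan for the second inequality. First, you invoke ``$L$-smoothness of $f$'' and attribute it to Assumption~\ref{as:contdiff}, but that assumption only gives continuous differentiability; in this proposition $L$ is the Lipschitz constant of $f$ itself, i.e.\ a uniform bound on $\norm{\nabla f(\theta)}_2$ over the compact set $\Theta$ (Assumptions~\ref{aspt:constraint}+\ref{as:contdiff}), while a gradient-Lipschitz constant is denoted $L_G$ in this paper and requires Assumption~\ref{as:noise}, which is not available here. Second, your decomposition around the pivot $\nabla f(y^\star)$ leaves the term $\langle\nabla f(y^\star),\bar\theta-\hat y\rangle$, which has no a priori relation to $\norm{\mathcal{G}_\Theta}_2$; the promise to ``re-express'' it via projection optimality plus an absorbable residual is precisely the step that is missing. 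The natural argument keeps $\nabla f(\bar\theta)$ and splits $\bar\theta-\hat y=(\bar\theta-y^\star)+(y^\star-\hat y)$: the first piece is bounded by $\norm{\nabla f(\bar\theta)}_2\norm{\bar\theta-y^\star}_2\leq (L/\beta)\norm{\mathcal{G}_\Theta}_2$, and the second, via the projection optimality condition tested at $y=\hat y$, by $\beta\langle\bar\theta-y^\star,y^\star-\hat y\rangle\leq D_\Theta\norm{\mathcal{G}_\Theta}_2$, giving $g_\Theta\leq(L/\beta+D_\Theta)\norm{\mathcal{G}_\Theta}_2$. The diameter term you worry about ``leaking through'' is in fact unavoidable in general: with $\Theta=[0,1]$, $f(\theta)=\theta$, $\bar\theta=1$, $\beta=2$ one gets $g_\Theta=1$ while $(L/\beta)\norm{\mathcal{G}_\Theta}_2=1/2$, so the bound with prefactor exactly $L/\beta$ cannot be proved for all $\beta$ by any rearrangement of your decomposition; it holds only under a restriction such as $\beta\leq L$ (or with the $D_\Theta$ term retained, which is harmless for the way the paper uses the result with $\beta=1$).
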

In this work we use a suboptimality measure, closely related to both GM and the FW-gap. At point $\bar{\theta}\in\Theta$, we define the suboptimality measure $V(\bar{\theta},z):\mathbb{R}^d\times \mathbb{R}^d\to\mathbb{R}$ as \cite{ghadimi2020single}
\begin{align}
    V(\bar{\theta},z)\coloneqq\norm{\Pi_\Theta\left(\bar{\theta}-z/\beta\right)-\bar{\theta}}_2^2+\norm{z-\nabla f(\bar{\theta})}_2^2,\label{eq:Vdef}
\end{align}
where $z$, formally defined in Algorithm~\ref{alg:asa}, is the moving-average estimate of $\nabla f(\bar{\theta})$.
We show the relation among $V(\theta,z)$, and GM $\mathcal{G}_{\Theta}(\theta, z, \beta)$ in the following proposition.
\begin{proposition}\label{prop:VGMFWrel}
Let $\{z_k\}$ be the sequence generated in Algorithm~\ref{alg:asa}. Then, for $k=1,2,\cdots,N$,
\begin{align*}
   \| \mathcal{G}_{\Theta}(\theta_k, z_k, \beta)\|_2^2\leq \max(2,2\beta^2)V(\theta_k,z_k).
   %    _k \lesssim V(\theta_k,z_k) \qquad k=1,2,\cdots,N.
\end{align*}
\end{proposition}
\noindent \begin{proof}[Proof of Proposition~\ref{prop:VGMFWrel}]\label{pf:VGMFWrel}
Using properties of projection onto a convex set, we have 
\begin{align*}
    \| \mathcal{G}_{\Theta}(\theta, z, \beta)\|_2^2&\leq 2\beta^2 \norm{\theta - \Pi_{\Theta}\left(\theta- z/\beta\right)}_2^2+2\beta^2 \norm{\Pi_{\Theta}\left(\theta- z/\beta\right)-\Pi_{\Theta}\left(\theta- \nabla f(\theta)/\beta\right)}_2^2 \\ 
    &\leq \max(2,2\beta^2) V(\theta,z).
\end{align*}
\iffalse
Recall that one can write $\Pi_{\Theta}\left(\theta_k- \frac{1}{\beta}z_k\right)$ as the solution of the following optimization,
\begin{align*}
    \underset{y\in\Theta}{\argmin}\left\lbrace \langle z_k,y-\theta_k\rangle+\frac{\beta}{2}\norm{y-\theta_k}_2^2\right\rbrace.
\end{align*} 
Then, as mentioned in \cite{ghadimi2020single}, from the optimality condition of the above optimization one has 
\begin{align*}
    \langle z_k,\Pi_{\Theta}\left(\theta_k- \frac{1}{\beta}z_k\right)-\theta_k\rangle\leq -\beta\norm{\Pi_{\Theta}\left(\theta_k- \frac{1}{\beta}z_k\right)-\theta_k}_2^2\leq 0.
\end{align*}
Then, 
    \begin{align*}
         g(\theta_k,\nabla f(\theta_k))=& \underset{y\in\Theta}{\min}~\langle \nabla f({\theta_k}) , y -{\theta_k} \rangle\\
         =&\underset{y\in\Theta}{\min}~\left(\langle \nabla f({\theta_k})-z_k , y -{\theta_k} \rangle+\langle z_k , y -{\theta_k} \rangle\right)\\
         \leq &\left\langle \nabla f({\theta_k})-z_k , \Pi_{\Theta}\left(\theta_k- \frac{1}{\beta}z_k\right) -{\theta_k} \right\rangle+\left\langle  z_k , \Pi_{\Theta}\left(\theta_k- \frac{1}{\beta}z_k\right) -{\theta_k} \right\rangle\\
         \leq &\frac{1}{2}\norm{\nabla f({\theta_k})-z_k}_2^2 + \frac{1}{2}\norm{\Pi_{\Theta}\left(\theta_k- \frac{1}{\beta}z_k\right) -{\theta_k}}_2^2=\frac{1}{2}V(\theta_k,z_k). 
    \end{align*}
    \fi
\end{proof}
\iffalse
Gradient-based iterative algorithms often involve solving sub-problems as follows:
\begin{equation} \label{eq:main_subproblem}
\underset{y \in \Theta}{\argmin} \left\{ \langle g,y-\theta \rangle + \frac{\beta}{2} \| y -\theta\|^2 \right\},
\end{equation}
for some $\beta>0$, $g\in\mathbb{R}^d$ and $\theta \in \Theta$. 
Let the optimal solution to \eqref{eq:main_subproblem} be $\Pi_\Theta (\theta,g,\beta)$. Noting the optimality condition of \eqref{eq:main_subproblem}, one can easily show that
\begin{align*}
    -\nabla f(\bar{\theta}) \in \mathcal{N}_{\Theta}(\bar{\theta})+{\cal B}\Big(0,\|g-\nabla f(\bar{\theta})\|D_{\Theta}+\beta\|\theta-\Pi_\Theta (\theta,g,\beta)\|(D_{\Theta}+\|\nabla f(\bar{\theta})\|/\beta)\Big), \numberthis\label{eq:optimalcond}
\end{align*}
where $\mathcal{N}_{\Theta}(\bar{\theta})$ is the normal cone to $\Theta$ at $\bar{\theta}$ and ${\cal B}(0,r)$ denotes a ball centered at the origin with radius $r$.
Thus, if $\bar{\theta}$ makes the radius of ball in \eqref{eq:optimalcond} small, then $\bar{\theta}$ is an approximate first-order stationary point for the non-convex constrained minimization problem.
\fi
%in the form of \eqref{eq:opt-multi}, a first-order stationary point $\bar{\theta}$ of the problem \eqref{eq:opt-multi} should satisfy $$-\nabla f(\bar{\theta}) \in \mathcal{N}_{\Theta}(\bar{\theta}), $$  
%where $\mathcal{N}_{\Theta}(\bar{\theta})$ is the normal cone to $\Theta$ at $\bar{\theta}$. \todo{Add more motivation here} 
The main objective of this work is to find an $\epsilon$-stationary solution to~\eqref{eq:mainproblem}, where an $\epsilon$-stationary solution is defined as follows: 
\begin{definition}\label{def:gradmapping}
A point $\bar{\theta}$ is said to be an $\epsilon$-stationary solution to \eqref{eq:mainproblem}, if $\expec{V(\bar{\theta},z)}{}\leq \epsilon$, where the expectation is taken over all the randomness involved in the problem.
\end{definition}
For stochastic Frank-Wolfe-type algorithms, the oracle complexity is measured in terms of number of calls to the Stochastic First-order Oracle (SFO) and the Linear Minimization Oracle (LMO) used to the solve the sub-problems of the algorithm which involves minimizing a linear function over the convex constraint set. Formally, we have the following definition.
\begin{definition}
For a given point $\theta\in\Theta$, SFO returns the stochastic gradient $\nabla F(\theta,x)$. Given a vector $z$, LMO returns a vector $v:=\argmin_{y\in\Theta}\langle z,y\rangle$.
\end{definition}
Hence, in this work, the oracle complexity is measured in terms of the number of calls to SFO and LMO required by the proposed algorithm to obtain an $\epsilon$-stationary solution as in Definition~\ref{def:gradmapping}. With the above preliminaries, we now list our \textbf{main contributions}:
\begin{itemize}[noitemsep]
\item In Theorem~\ref{th:mainthm}, we show that the number of calls to the SFO and LMO required by the stochastic conditional gradient-type method in Algorithm~\ref{alg:asa}, with \textit{state-dependent} Markovian data is of order $\mathcal{O}(\epsilon^{-2.5})$ and $\mathcal{O}(\epsilon^{-5.5})$ respectively in terms of the FW-Gap and the Gradient Mapping criterion. To the best of our knowledge, these are the first oracle complexity results for projection-free one-sample stochastic optimization algorithm for constrained nonconvex optimization in the Markovian setting. Our result also implies an SFO complexity of $\mathcal{O}(\epsilon^{-2.5})$ for projection-based algorithms in terms of the Gradient Mapping criterion. 
\item In Theorem~\ref{th:mainthmhomog}, for the sake of completion, we also show that the number of calls to the SFO and LMO required for the case of \emph{state-independent} Markovian data is of the order $\mathcal{\tilde{O}}(\epsilon^{-2})$ and $\mathcal{\tilde{O}}(\epsilon^{-3})$ respectively. In particular, this turns out to be of the same order as that of $\iid$ data ignoring the logarithmic factors.
\end{itemize}
A summary of the our contributions is provided in Table~\ref{tab:mainresults}. We also empirically evaluate our algorithm on a strategic classification problem with 2-layer neural network classifier and show that the proposed method obtains encouraging results. \textcolor{black}{We provide an experiment on single-index model regression with sparsity-inducing nuclear-norm ball constraint in Section~\ref{sec:tracenorm}.} %\ar{"encouraging" maybe since we are not comparing with any other method?}
\vspace{-0.1in}
\subsection{Related Work}
\begin{table}[t]
\resizebox{\textwidth}{!}{%
\begin{tabular}{|l|l|ll|llll|}
\hline
                                                     &                        & \multicolumn{2}{l|}{}                                                                                  & \multicolumn{4}{c|}{non-$\iid$}                                                                                                                                                                                       \\ \cline{5-8} 
\multirow{-2}{*}{}                                   & \multirow{-2}{*}{} & \multicolumn{2}{c|}{\multirow{-2}{*}{$\iid$}}                                             & \multicolumn{2}{l|}{State-independent MC}                                                            & \multicolumn{2}{l|}{State-dependent MC}                                                                                            \\\hline
Algorithm                                            & Criterion              & \multicolumn{1}{l|}{SFO}                                     & LMO                                     & \multicolumn{1}{l|}{SFO}                      & \multicolumn{1}{l|}{LMO}                      & \multicolumn{1}{l|}{SFO}                                                               & LMO                                       \\ \hline
1-SFW \cite{zhang2020one}           & FW-gap                 & \multicolumn{1}{l|}{$\mathcal{O}\left(\epsilon^{-3}\right)$} & $\mathcal{O}\left(\epsilon^{-3}\right)$ & \multicolumn{1}{l|}{\xmark}                  & \multicolumn{1}{l|}{\xmark}                  & \multicolumn{1}{l|}{\xmark}                                                           & \xmark                                   \\ \hline
(ASA+ICG) \cite{xiao2022projection} & GM                   & \multicolumn{1}{l|}{$\mathcal{O}\left(\epsilon^{-2}\right)$} & $\mathcal{O}\left(\epsilon^{-3}\right)$ & \multicolumn{1}{l|}{\xmark}                  & \multicolumn{1}{l|}{\xmark}                  & \multicolumn{1}{l|}{\xmark}                                                           & \xmark                                   \\ \hline
\rowcolor[HTML]{F0FFFF} 
(ASA+ICG) [This paper]                                 & GM           &  
\multicolumn{1}{l|}{\xmark}              &            \multicolumn{1}{l|}{\xmark}             & 
\multicolumn{1}{l|}{$\mathcal{\tilde O}\left(\epsilon^{-2}\right)$}              &            \multicolumn{1}{l|}{$\mathcal{\tilde O}\left(\epsilon^{-3}\right)$}             &  \multicolumn{1}{l|}{\cellcolor[HTML]{F0F8FF}$\mathcal{O}\left(\epsilon^{-2.5}\right)$} & $\mathcal{O}\left(\epsilon^{-5.5}\right)$  \\ \hline
\end{tabular}%
}
\caption{Oracle complexity of projection-free one-sample stochastic conditional gradient algorithms for constrained non-convex optimization, to find an $\epsilon$-stationary point.}
\label{tab:mainresults}
%\vspace{-0.1in}
\end{table}
\noindent\textbf{Stochastic Optimization with Dependent Data.} Understanding stochastic approximation algorithms like SGD with dependent data in the asymptotic setting has been well-explored in the optimization literature. We refer to~\cite{Kushner2003book,borkar2009stochastic,  benveniste2012adaptive} for a text-book introduction to such classical results. A few recent results include~\cite{andrieu2005stability, tadic2017asymptotic}. In the unconstrained non-asymptotic setting, \cite{duchi2012ergodic} studies convex optimization with ergodic data sequence. \cite{dorfman2022adapting} uses multi-level gradient estimator and analyzes AdaGrad for nonconvex optimization with Markovian Data. Block coordinate descent with homogeneous Markov chain has been analyzed in \cite{sun2020markov} for nonconvex unconstrained optimization. \cite{drusvyatskiy2020stochastic} studies stochastic optimization with state-dependent Markov data for strongly convex functions in the context of strategic classification.  

Sample-average approximation algorithms for constrained convex optimization with $\phi$-mixing data was considered in \cite{wang2021sample}. \cite{sun2018markov}, and \cite{alacaoglu2022convergence} analyze projected SGD for constrained nonconvex optimization with time-homogeneous Markov chain. We emphasize that the above works, except for \cite{drusvyatskiy2020stochastic} do not  consider state-dependent Markov data. Furthermore, unlike \cite{drusvyatskiy2020stochastic}, we study constrained nonconvex optimization problems. 

There also exists work in the reinforcement learning literature on understanding stochastic optimization with Markovian data; see, for example,~\cite{bhandari2018finite, srikant2019finite, xu2019two, kaledin2020finite,doan2020convergence,xiong2021non,durmus2021stability}. However, such works are invariably focused on specific objective functions arising in the reinforcement learning setup, while our focus is on obtaining results for a general class of functions.\\

\noindent\textbf{Conditional Gradient-Type Method.} There has been significant recent advancements in understanding conditional gradient algorithm in the machine learning literature; see, for example,~\cite{migdalas1994regularization,jaggi2013revisiting,lacoste2015global, lacoste2015global, harchaoui2015conditional, garber2021improved, beck2017linearly}, for a non-exhaustive list.  ~\cite{hazan2012projection, hazan2016variance} provided expected oracle complexity results for stochastic conditional gradient algorithm in the stochastic convex setup. Better rates were provided by a sliding procedure in~\cite{lan2016conditional}. In the non-convex setting, ~\cite{reddi2016stochastic, yurtsever2019conditional, hazan2016variance} considered variance reduced stochastic conditional gradient algorithms, and provided expected oracle complexities. \cite{qu2018non} analyzed the sliding algorithm in the non-convex setting and provided results for the gradient mapping criterion. All of the above works use increasing orders of mini-batch based gradient-estimate.

To avoid mini-batches, a moving-average gradient estimator based on only one-sample in each iteration for a stochastic conditional gradient-type algorithm was proposed in \cite{mokhtari2020stochastic} and~\cite{zhang2020one} for the convex and non-convex setting. However, several restrictive assumptions have been made in ~\cite{mokhtari2020stochastic} and~\cite{zhang2020one}. Specifically,~\cite{zhang2020one} requires that the stochastic gradient $G_1(x,\xi_1)$ has uniformly bounded function value, gradient-norm, and Hessian spectral-norm, and the distribution of the random vector $\xi_1$ has an absolutely continuous density $p$ such that the norm of the gradient of $\log p$ and spectral norm of the Hessian of $\log p$ has finite fourth and second-moments respectively. A recent work~\cite{xiao2022projection} provided similar convergence results under significantly weaker conditions. 
%\vspace{-0.1in}
\section{Main Results}\label{sec:assumption}
\subsection{Methodology}
We use the moving-average based single-time scale algorithm as proposed by~\cite{ghadimi2020single} for constrained optimization.  The overall procedure is provided in Algorithm~\ref{alg:asa}, and \ref{alg:icg}. 
\begin{algorithm}[h]
\caption{Inexact Averaged Stochastic Approximation (I-ASA)}\label{alg:asa}
\textbf{Input:} $z_0,\theta_0\in \mathbb{R}^d$, $\eta_k=(N+k)^{-a}$, $1/2<a<1$, $\beta$. 
\begin{algorithmic}[h]
\State \textbf{for} $k=1,2,\cdots,N$ \textbf{do}
\State $y_k=\begin{cases}
  \underset{y\in\Theta}{\min}\left\lbrace \langle z_k,y-\theta_k\rangle+\frac{\beta}{2}\norm{y-\theta_k}_2^2\right\rbrace & \text{(Projection)} \\
  \text{ICG}(z_k,\theta_k,\beta,t_k,\omega) & \text{(No Projection)}
\end{cases}$ 
    \State $\theta_{k+1}=\theta_k+\eta_{k+1}(y_k-\theta_k)$
    %&\tz_{k+1}=(1-a\eta_{k+1})\tz_k+a\eta_{k+1}\left(\nabla f(\theta_k)+\tilde{\epsilon}_{k+1}\right)\\
    \State $z_{k+1}=(1-\eta_{k+1})z_{k}+\eta_{k+1}\nabla F(\theta_k,x_{k+1})$
    \State \textbf{end for}
\end{algorithmic}
\textbf{Output:} $\theta_R$ where $P(R=i)=\frac{\eta_i}{\sum_{j=1}^N\eta_j}$ for $i=1,2,\cdots,N$.
\end{algorithm}
\begin{algorithm}[h]
\caption{Inexact Conditional Gradient (ICG)}\label{alg:icg}
\textbf{Input:} $z, \theta,\beta,t,\omega$.
\begin{algorithmic}[h]
\State \textbf{Set} $w_0=\theta$
\State \textbf{for} $i=1,2,\cdots,{t-1}
$ \textbf{do}
\State \text{Find} $v_i$ \text{such that}\\
\vspace{-0.15in}
\begin{align*}
    \left\langle v_i,z+\beta(w_i-\theta)\right\rangle\leq \argmin_{v\in\Theta} \left\langle v,z+\beta(w_i-\theta)\right\rangle+\beta\omega\mathcal{D}_\Theta^2/(i+2)
\end{align*}
    \State $w_{i+1}=(1-\mu_i)w_i+\mu_iv_i$ \text{where} $\mu_i=\frac{2}{i+2}$
    %&\tz_{k+1}=(1-a\eta_{k+1})\tz_k+a\eta_{k+1}\left(\nabla f(\theta_k)+\tilde{\epsilon}_{k+1}\right)\\
    \State \textbf{end for}
\end{algorithmic}
\textbf{Output:} $w_t$
\end{algorithm}
 It is worth emphasizing that the above approach is similar to \texttt{ASA} algorithm introduced in \cite{ghadimi2020single} except that we do not assume the knowledge of the exact minimizer, which is the projection of $\theta_k-z_k/\beta$ on to $\Theta$, of the following subproblem:
\begin{align*}
    \underset{y\in\Theta}{\min}\left\lbrace \langle z_k,y-\theta_k\rangle+\frac{\beta}{2}\norm{y-\theta_k}_2^2\right\rbrace.\numberthis\label{eq:mainalgsubprob}
\end{align*}
Instead, at iteration $k$, Algorithm~\ref{alg:icg} finds an approximate solution to \eqref{eq:mainalgsubprob} based on the conditional gradient algorithm. The main idea is to replace costly projection operator by the Inexact Conditional Gradient (ICG) method which solves \eqref{eq:mainalgsubprob} approximately with access to LMO which is often much cheaper and simpler to compute. Such an approach was also used recently in \cite{xiao2022projection} in the context of $\iid$ data. 
\subsection{Assumptions}
We now introduce the assumption that we make on the optimization problem~\eqref{eq:mainproblem}. We refer to~\cite{douc2018markov} for a textbook introduction to additional details regarding several assumptions below. Let $\cF_k$ be the filtration generated by $\{\theta_0,\cdots,\theta_k,z_0,\cdots,z_k,x_1,\cdots,x_k\}$. For any mapping $g:\mathbb{R}^d\to\mathbb{R}^d$ define the norm with respect to a function $\mathcal{V}:\mathbb{R}^d\to [1,\infty)$ as
\begin{align*}
    \norm{g}_\mathcal{V}=\underset{x\in\mathbb{R}^d}{\sup}\frac{\norm{g(x)}_2}{\mathcal{V}(x)},
\end{align*}
and let $L_\mathcal{\mathcal{V}}=\{g:\mathbb{R}^d\to\mathbb{R}^d,\sup_{x\in\mathbb{R}^d}\norm{g}_\mathcal{V}<\infty\}$.

\begin{assumption}[Constraint set]\label{aspt:constraint}
The set $\Theta\subset \mathbb{R}^d$ is convex and closed with $\underset{x,y \in \Theta}{\max}~\|x-y\|_2 \leq D_\Theta$, for some $D_\Theta>0$.
\end{assumption}
\begin{assumption}\label{as:contdiff}
Let $f$ be a continuously differentiable function. 
\end{assumption}
\begin{assumption}\label{as:finitenoisevar}
Let $\xi_{k+1}(\theta_k,x_{k+1})\coloneqq\nabla F(\theta_k,x_{k+1})-\nabla f(\theta_k)$. Then,
\begin{align*}
    \expec{\norm{\xi_{k+1}(\theta_k,x_{k+1})}_2^2|\cF_{k}}{}\leq \sigma_1^2, \qquad \expec{\norm{\nabla F(\theta_k,x_{k+1})}_2^2|\cF_{k}}{}\leq \sigma_2^2.
\end{align*}
Let $\sigma^2\coloneqq \max(\sigma_1^2,\sigma_2^2)$.
\end{assumption}
\iffalse
\begin{assumption}\label{as:lipgrad}
Let $f$ be a continuously differentiable function. 
$f(\cdot)$ has Lipschitz continuous gradient, we have
\begin{align*}
    \norm{\nabla f(\theta_1)-\nabla f(\theta_2)}_2\leq L_G\norm{\theta_1-\theta_2}_2.
\end{align*}
\end{assumption}
\fi
\begin{assumption}\label{as:noise}
Let $\{x_k\}_k$ be a Markov chain controlled by $\theta$, i.e., there exists a transition probability kernel $P_\theta(\cdot,\cdot)$ such that
\begin{align*}
    \mathbb{P}(x_{k+1}\in B|\theta_0,x_0,\cdots,\theta_k,x_k)=P_{\theta_k}(x_k,B),
\end{align*}
almost surely for any Borel-measurable set $B\subseteq\mathbb{R}^d$ for $k\geq 0$. For any $\theta\in\Theta$, $P_\theta$ is irreducible and aperiodic. Additionally, there exists a function $\mathcal{V}:\mathbb{R}^d\to [1,\infty)$ and a constant $\alpha_0\geq 2$ such that for any compact set $\Theta'\subset\Theta$, we have the following.
\begin{enumerate}[label=(\alph*)]
    \item \label{eq:asa31} There exist a set $C\subset \mathbb{R}^d$, an integer $l$, constants $0<\lambda<1$, $b$, $\kappa$, $\delta>0$, and a probability measure $\nu$ such that,
    \begin{align}
        \sup_{\theta\in\Theta'}P_\theta^l\mathcal{V}^{\alpha_0}(x)&\leq \lambda \mathcal{V}^{\alpha_0}(x)+bI(x\in C)\quad \forall x\in\mathbb{R}^d,\\
        \sup_{\theta\in\Theta'}P_\theta \mathcal{V}^{\alpha_0}(x)&\leq\kappa \mathcal{V}^{\alpha_0}(x)\quad \forall x\in\mathbb{R}^d,\\
        \inf_{\theta\in\Theta'}P_\theta^l(x,A)&\geq \delta\nu(A) \forall x\in C, \forall A\in \mathcal{B}_{\mathbb{R}^d}.
    \end{align}
    where $\mathcal{B}_{\mathbb{R}^d}$ is the Borel $\sigma$-algebra over $\mathbb{R}^d$, and for a function $m$, $P_\theta m(x) \coloneqq \int P_\theta (x,y) m(y)\, dy $.
    \item There exists a constant $c>0$, such that, for all $x\in\mathbb{R}^d$,
    \begin{align}
        \sup_{\theta\in\Theta'}\norm{\nabla F(\theta,x)}_\mathcal{V}&\leq c,\\
        \norm{\nabla F(\theta,x)-\nabla F(\theta',x)}_\mathcal{V}&\leq c\norm{\theta-\theta'}_2\qquad \forall (\theta,\theta')\in\Theta'.
    \end{align}
    \item There exists a constant $c>0$, such that, for all $(\theta,\theta')\in\Theta'\times\Theta'$,
    \begin{align}
        \norm{P_\theta g-P_{\theta'} g}_\mathcal{V}&\leq c\norm{g}_\mathcal{V}\norm{\theta-\theta'}_2\quad \forall g\in L_\mathcal{V}\\
         \norm{P_\theta g-P_{\theta'} g}_{\mathcal{V}^{\alpha_0}}&\leq c\norm{g}_{\mathcal{V}^{\alpha_0}}\norm{\theta-\theta'}_2\quad \forall g\in L_{\mathcal{V}^{\alpha_0}}. \label{eq:transitionsmooth}
    \end{align}
\end{enumerate}
\end{assumption}
Some comments regarding the assumptions are in order. Assumption~\ref{aspt:constraint}, and Assumption~\ref{as:contdiff} are common in the literature on smooth constrained optimization \cite{ghadimi2020single,xiao2022projection,alacaoglu2022convergence,zhang2020one}. Assumption~\ref{aspt:constraint}, and Assumption~\ref{as:contdiff} together imply the Lipschitz continuity of $f(\cdot)$, i.e., there is a constant $L>0$ such that for any $\theta_1,\theta_2\in\Theta$, we have $    \abs{f(\theta_1)-f(\theta_2)}\leq L\norm{\theta_1-\theta_2}_2$.
Assumption~\ref{as:finitenoisevar} is common in stochastic optimization literature. Assumption~\ref{as:noise}(a) is a frequently used assumption in Markov chain literature. It implies that for every $\theta\in\Theta$, there exists a stationary distribution $\pi_{\theta}(x)$, and the chain is $\mathcal{V}^{\alpha_0}$-uniformly ergodic \cite{andrieu2005stability}. Assumption~\ref{as:noise}(c) provides smoothness guarantee on the function $f(\cdot)$. More formally, we have the following proposition.
\begin{proposition}[\L continuous gradient \cite{andrieu2005stability}]
Let Assumption~\ref{as:noise} be true. Then $f(\cdot)$ has Lipschitz continuous gradient, i.e., there is a constant $L_G>0$ such that for any $\theta_1,\theta_2\in\Theta$:
\begin{align*}
    \norm{\nabla f(\theta_1)-\nabla f(\theta_2)}_2\leq L_G\norm{\theta_1-\theta_2}_2.\numberthis\label{eq:lipgrad}
\end{align*}
\end{proposition}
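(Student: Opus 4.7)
The plan is to exploit the identity $f(\theta)=\int F(\theta,x)\,\pi_\theta(dx)$ and differentiate through both the integrand and the $\theta$-dependent stationary measure. Concretely, under Assumption~\ref{as:noise}, one can show that $\nabla f(\theta)$ admits a Poisson-equation representation of the form
\begin{align*}
   \nabla f(\theta) \;=\; \pi_\theta(\nabla_\theta F(\theta,\cdot)) \;+\; T(\theta),
\end{align*}
where $T(\theta)$ captures the sensitivity of the stationary measure. The standard way to obtain $T$ is to introduce the Poisson solution $\hat F_\theta$ satisfying $\hat F_\theta - P_\theta \hat F_\theta = F(\theta,\cdot)-\pi_\theta(F(\theta,\cdot))$, differentiate the invariance identity $\pi_\theta P_\theta = \pi_\theta$, and express $T(\theta)$ as an integral of $\hat F_\theta$ against $\nabla_\theta P_\theta$. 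Existence of $\hat F_\theta$ with controlled $\mathcal{V}$-norm follows from the $\mathcal{V}^{\alpha_0}$-uniform geometric ergodicity implied by Assumption~\ref{as:noise}\ref{eq:asa31}; the requirement $\alpha_0 \geq 2$ is what lets products like $\nabla F \cdot \hat F_\theta$ still be integrable against $\pi_\theta$.

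Next, I would split
\begin{align*}
   \nabla f(\theta_1)-\nabla f(\theta_2) \;=\; \underbrace{\pi_{\theta_2}\!\left(\nabla F(\theta_1,\cdot)-\nabla F(\theta_2,\cdot)\right)}_{A_1} \;+\; \underbrace{(\pi_{\theta_1}-\pi_{\theta_2})(\nabla F(\theta_1,\cdot))}_{A_2} \;+\; T(\theta_1)-T(\theta_2).
\end{align*}
Term $A_1$ is immediately bounded by $c\,\pi_{\theta_2}(\mathcal{V})\,\norm{\theta_1-\theta_2}_2$ via the Lipschitz part of Assumption~\ref{as:noise}(b), and the drift inequality $P_\theta \mathcal{V}^{\alpha_0}\leq \kappa\mathcal{V}^{\alpha_0}$ together with the minorization produces a uniform bound $\sup_{\theta\in\Theta'}\pi_\theta(\mathcal{V})<\infty$. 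For $A_2$, I would use the classical telescoping identity
\begin{align*}
   (\pi_{\theta_1}-\pi_{\theta_2})(g)\;=\;\pi_{\theta_1}\bigl((P_{\theta_1}-P_{\theta_2})\hat g_{\theta_2}\bigr),
\end{align*}
with $\hat g_{\theta_2}$ the Poisson solution for $P_{\theta_2}$ and $g=\nabla F(\theta_1,\cdot)$; the inner norm is then controlled by $c\,\norm{\hat g_{\theta_2}}_\mathcal{V}\norm{\theta_1-\theta_2}_2$ from Assumption~\ref{as:noise}(c), and $\norm{\hat g_{\theta_2}}_\mathcal{V}$ is uniformly bounded in $\theta_2$ by the $\mathcal{V}$-geometric ergodicity. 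For the last piece $T(\theta_1)-T(\theta_2)$, I would add/subtract exactly as in $A_1$ and $A_2$ but at the level of $\hat F_\theta$ and $\nabla_\theta P_\theta$, which is where the stronger $\mathcal{V}^{\alpha_0}$-version \eqref{eq:transitionsmooth} of kernel smoothness kicks in—one needs it because the integrand now pairs $\hat F_\theta \in L_\mathcal{V}$ with another $\mathcal{V}$-bounded quantity.

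Combining all three pieces gives $\norm{\nabla f(\theta_1)-\nabla f(\theta_2)}_2 \leq L_G\norm{\theta_1-\theta_2}_2$ with $L_G$ depending only on $c$, $\lambda$, $\kappa$, $b$, $\delta$, $I$ from Assumption~\ref{as:noise}. The main obstacle is the quantitative perturbation bound $\norm{\pi_{\theta_1}-\pi_{\theta_2}}_\mathcal{V}\leq C\norm{\theta_1-\theta_2}_2$ and the analogous Lipschitz control on $\hat F_\theta$ as a function of $\theta$; both are non-trivial and rely delicately on combining the drift/minorization in Assumption~\ref{as:noise}(a) with the kernel-smoothness in Assumption~\ref{as:noise}(c), and this is precisely the content worked out in \cite{andrieu2005stability}, which we invoke rather than re-derive.
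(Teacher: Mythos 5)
The paper itself never proves this proposition: it is imported wholesale from \cite{andrieu2005stability}, and your write-up ultimately does the same, deferring the quantitative perturbation bounds on $\pi_\theta$ and on the Poisson solution to that reference — so at bottom your route coincides with the paper's. Two remarks, though. First, your decomposition $\nabla f(\theta)=\pi_\theta(\nabla_\theta F(\theta,\cdot))+T(\theta)$, with $T$ obtained by differentiating the invariance identity $\pi_\theta P_\theta=\pi_\theta$, is not available under Assumption~\ref{as:noise}: that assumption only gives \emph{Lipschitz continuity} of $\theta\mapsto P_\theta g$ in the $\mathcal{V}$- and $\mathcal{V}^{\alpha_0}$-norms, cf.\ \eqref{eq:transitionsmooth}, not differentiability, so $\nabla_\theta P_\theta$ is not defined and the $T(\theta_1)-T(\theta_2)$ step cannot be carried out as written. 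Second, the paper's framework sidesteps this issue: the Poisson equation \eqref{eq:poissoneq} is solvable only if $\pi_\theta\bigl(\nabla F(\theta,\cdot)-\nabla f(\theta)\bigr)=0$, i.e.\ the quantity the paper calls $\nabla f(\theta)$ is the mean field $\pi_\theta(\nabla F(\theta,\cdot))$, with no measure-sensitivity term $T(\theta)$ appearing at all. With that reading, your terms $A_1$ and $A_2$ are exactly what must be bounded, and your treatment of them — the uniform bound $\sup_{\theta}\pi_\theta(\mathcal{V})<\infty$ from drift plus minorization, the telescoping identity $(\pi_{\theta_1}-\pi_{\theta_2})(g)=\pi_{\theta_1}\bigl((P_{\theta_1}-P_{\theta_2})\hat g_{\theta_2}\bigr)$ with a $\mathcal{V}$-norm bound on the Poisson solution, and Assumption~\ref{as:noise}(b)--(c) — is the standard argument and is precisely the content of the cited result. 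So: drop $T(\theta)$ (or add a differentiability hypothesis on the kernel that the paper does not make), and what remains is correct and matches the proof the paper points to in \cite{andrieu2005stability}.
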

Finally, the most important implication of Assumption~\ref{as:noise} is that it ensures the existence and regularity of a solution $u(\theta,x)$ to the Poisson equation,
\begin{align}\label{eq:poissoneq}
u(\theta,x)-P_{\theta}u(\theta,x)=\nabla F(\theta,x)-\nabla f(\theta), 
\end{align}
associated to the transition kernel $P_\theta$, where $P_\theta u(\theta,x)\coloneqq \int_{\mathbb{R}^d}u(\theta,x')P_\theta (x,x')dx'$. Solutions of Poisson equation have been crucial in analyzing additive functionals of Markov chain (see \cite{andrieu2005stability} for details). In this work, the Poisson equation solution facilitates a decomposition of the noise as presented in Lemma~\ref{lm:noisedecompbound} which is a key component of our analysis.

\subsection{State-dependent Markov Chain}
We now present our main result on the oracle complexity to establish a bound on $\expec{V(\theta_k,z_k)}{}$.  We emphasize that our results are not limited to \texttt{ICG} method but are valid for any method which can solve \eqref{eq:mainalgsubprob} within an error of the order of $\{\eta_k\}$.
\textcolor{black}{\begin{theorem}\label{th:mainthm}
Let Assumption~\ref{aspt:constraint}-\ref{as:noise} be true. Then, for Algorithm~\ref{alg:asa},
\begin{enumerate}[label=(\alph*)]
    \item when a projection operator is available,
    choosing 
    \begin{align}
    \eta_k=(N+k)^{-3/5}, \quad \beta=1 \label{eq:parchoice}
\end{align}
for $k=1,2,\cdots,N$ we have  
\begin{align*}
    \expec{V(\theta_R,z_R)}{}=\mathcal{O}\left(N^{-\frac{2}{5}}\right),
\end{align*}
\item when Algorithm~\ref{alg:icg} is used to solve \eqref{eq:mainalgsubprob},choosing 
    \begin{align}
    \eta_k=(N+k)^{-3/5}, \quad t_k=\eta_k^{-2}, \quad \beta=1, \quad \omega=1, \quad \mu_i=2/(i+2) \label{eq:parchoice}
\end{align}
for $k=1,2,\cdots,N$ we have  
\begin{align*}
    \expec{V(\theta_R,z_R)}{}=\mathcal{O}\left(N^{-\frac{2}{5}}\right),
\end{align*}
\end{enumerate}
where the expectations are taken with respect to all the randomness of the algorithm, and an independent integer random variable $R\in\{1,2,\cdots,N\}$ with probability mass function,
\begin{align*}
    P\left(R=k\right)=\eta_k/\sum_{k=1}^N\eta_k \quad k\in \{1,2,\cdots,N\}.
\end{align*}
\end{theorem}}
\begin{remark}
Note that total number of LMO calls are $\sum_{k=1}^Nt_k=\sum_{k=1}^Nt_k=\sum_{k=1}^N(N+k)^{2a}=\mathcal{O}(N^{11/5})$. In other words, to achieve $\norm{\mathcal{G}_\Theta(\theta_R,\nabla f(\theta_R),\beta)}_2^2\leq \expec{V(\theta_R,z_R)}{}\leq \epsilon$, SFO and LMO complexities are respectively $\epsilon^{-2.5}$, and $\epsilon^{-5.5}$. Note that the SFO complexity will be $\epsilon^{-2.5}$ as long as one has an approximation of the projection operator with approximation error $\mathcal{O}(\eta_k)$. 
\end{remark}
\vspace{-0.1in}
\begin{remark}
\textcolor{black}{In Theorem~\ref{th:mainthm}, one obtains sublinear rate $\max(N^{a-1},N^{2-4a})$ with $\eta_k=(N+k)^{-a}$ for $1/2<a<1$. Choosing $a=3/5$ provides the fastest rate of convergence.}
\end{remark}
Before sketching the outline of the proof, we present the following lemma which provides a decomposition of the noise $\xi_{k}(\theta_{k-1},x_k)$ -- one of the key results used in the proof of the main theorem. The lemma and its proof are almost same as Lemma A.5 in \cite{liang2010trajectory} with the only difference that unlike \cite{liang2010trajectory}, where the iterates are of SGD, we need to prove it for the iterates of Algorithm~\ref{alg:asa}. We provide the proof in Appendix~\ref{pf:noisedecompbound}.   
\begin{lemma}\label{lm:noisedecompbound}
Let Assumption~\ref{aspt:constraint}-\ref{as:noise} be true. Then the following decomposition takes place:
\begin{align*}
    \xi_{k}(\theta_{k-1},x_{k})=e_{k}+\nu_{k}+\zeta_{k},
\end{align*}
where, $\{e_k\}_k$ is martingale difference sequence, $\expec{\norm{\nu_k}_2}{}\leq \eta_{k}$, and $\zeta_k=(\tilde{\zeta}_k-\tilde{\zeta}_{k+1})/\eta_k$, 
where $\{\tilde{\zeta}_k\}_k$ is a sequence such that $\expec{\|\tilde{\zeta}_k\|_2}{}\leq \eta_k$. The exact expressions of $\{e_k\}_k$, $\{\nu_k\}_k$, and $\{\tilde{\zeta}_k\}_k$ are provided in~\eqref{eq:compdef}. 
\end{lemma}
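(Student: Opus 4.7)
The plan is to use the Poisson equation~\eqref{eq:poissoneq} to rewrite the noise and then split it into three pieces by a standard add-and-subtract argument, following the Benveniste--M\'etivier--Priouret / Liang approach used in \cite{liang2010trajectory}. Assumption~\ref{as:noise}(a) (drift plus minorization) together with~\ref{as:noise}(b)--(c) (Lipschitz kernel and gradient field) guarantees the existence of a solution $u(\theta,x)$ to~\eqref{eq:poissoneq} that is bounded by $\mathcal{V}$ uniformly in $\theta\in\Theta$ and is Lipschitz in $\theta$ in the $\mathcal{V}$-norm; I would cite this regularity from~\cite{andrieu2005stability} rather than rederive it.

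Starting from $\xi_k(\theta_{k-1},x_k) = u(\theta_{k-1},x_k) - P_{\theta_{k-1}}u(\theta_{k-1},x_k)$, I would insert $\pm P_{\theta_{k-1}}u(\theta_{k-1},x_{k-1})$ and $\pm P_{\theta_k}u(\theta_k,x_k)$ to write
\begin{align*}
\xi_k
&= \bigl[u(\theta_{k-1},x_k) - P_{\theta_{k-1}}u(\theta_{k-1},x_{k-1})\bigr] \\
&\quad + \bigl[P_{\theta_k}u(\theta_k,x_k) - P_{\theta_{k-1}}u(\theta_{k-1},x_k)\bigr] \\
&\quad + \bigl[P_{\theta_{k-1}}u(\theta_{k-1},x_{k-1}) - P_{\theta_k}u(\theta_k,x_k)\bigr].
\end{align*}
The first bracket defines $e_k$: conditional on $\cF_{k-1}$ the law of $x_k$ is $P_{\theta_{k-1}}(x_{k-1},\cdot)$, so $\expec{u(\theta_{k-1},x_k)\mid\cF_{k-1}}{}=P_{\theta_{k-1}}u(\theta_{k-1},x_{k-1})$ and hence $\expec{e_k\mid\cF_{k-1}}{}=0$. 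The second bracket defines $\nu_k$; splitting it further as $(P_{\theta_k}-P_{\theta_{k-1}})u(\theta_k,\cdot)$ at $x_k$ plus $P_{\theta_{k-1}}[u(\theta_k,\cdot)-u(\theta_{k-1},\cdot)]$ at $x_k$, each summand is controlled in $\mathcal{V}$-norm by Assumption~\ref{as:noise}(c) and by the Lipschitzness of $u$ in $\theta$, respectively. The third bracket equals $A_k - A_{k+1}$ with $A_k \coloneqq P_{\theta_{k-1}}u(\theta_{k-1},x_{k-1})$; setting $\tilde\zeta_k \coloneqq \eta_k A_k$ gives $\expec{\|\tilde\zeta_k\|_2}{} = O(\eta_k)$ from the $\mathcal{V}$-bound on $u$, and $\zeta_k = (\tilde\zeta_k-\tilde\zeta_{k+1})/\eta_k$ up to a residue $(\eta_{k+1}/\eta_k - 1)A_{k+1}$ of size $O(1/k) = o(\eta_k)$ for $a<1$, which I would absorb into $\nu_k$.

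The only real departure from Liang's SGD-based proof is the bound on the parameter increment: our update produces $\theta_k-\theta_{k-1}=\eta_k(y_{k-1}-\theta_{k-1})$ with $y_{k-1},\theta_{k-1}\in\Theta$, so Assumption~\ref{aspt:constraint} yields $\|\theta_k-\theta_{k-1}\|_2\leq \eta_k D_\Theta$; this slots into the smoothness estimates above to give $\expec{\|\nu_k\|_2}{}\leq c D_\Theta \eta_k \expec{\mathcal{V}(x_k)}{} = O(\eta_k)$. The step I expect to require the most care is the uniform-in-$k$ control of $\expec{\mathcal{V}^{\alpha_0}(x_k)}{}$ along the iterate-dependent trajectory, since the kernel $P_{\theta_k}$ itself changes at every step. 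I would handle this by iterating the $l$-step drift bound $\sup_{\theta\in\Theta}P_\theta^l\mathcal{V}^{\alpha_0}(x)\leq\lambda\mathcal{V}^{\alpha_0}(x)+bI(x\in C)$ from Assumption~\ref{as:noise}(a) along the sample path -- valid because $\theta_k\in\Theta$ for every $k$ by convexity of $\Theta$ and the convex-combination form of the update -- which is the standard $\mathcal{V}$-uniform ergodicity argument and yields $\sup_k \expec{\mathcal{V}^{\alpha_0}(x_k)}{}<\infty$, completing the needed bounds on $e_k$, $\nu_k$, and $\tilde\zeta_k$.
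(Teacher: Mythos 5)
Your proposal is correct and follows essentially the same route as the paper: the identical Poisson-equation decomposition with $e_k=u(\theta_{k-1},x_k)-P_{\theta_{k-1}}u(\theta_{k-1},x_{k-1})$, the kernel/parameter-shift term as $\nu_k$, and $\tilde\zeta_k=\eta_k P_{\theta_{k-1}}u(\theta_{k-1},x_{k-1})$, with the stepsize-ratio residue $(\eta_{k+1}/\eta_k-1)P_{\theta_k}u(\theta_k,x_k)$ absorbed into $\nu_k$ exactly as in the paper's definition in~\eqref{eq:compdef}, and with $\|\theta_k-\theta_{k-1}\|_2\le\eta_k D_\Theta$ replacing the SGD increment bound. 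Your extra care about the uniform-in-$k$ control of $\expec{\mathcal{V}^{\alpha_0}(x_k)}{}$ via the drift condition is a detail the paper leaves implicit in its constants, not a different argument.
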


\noindent \textbf{Outline of the proof of Theorem~\ref{th:mainthm}:} The key step in the analysis of Algorithm~\ref{alg:asa} involves controlling the expectation of interaction with noise of the form $\langle \nabla f(\theta_{k})-\nabla f(\theta_{k-1}),\xi_{k+1}(\theta_k,x_{k+1})\rangle$. For $\iid$ or martingale difference data it is easy to control because $$\expec{\langle \nabla f(\theta_{k})-\nabla f(\theta_{k-1}),\xi_{k+1}(\theta_k,x_{k+1})\rangle|\cF_k}{}=0.$$ However, this is no longer true for Markov chain data. To resolve the issue, first notice that under our assumptions, the noise sequence $\xi_k$ can be decomposed into the sum of a martingale difference sequence $\{e_k\}$ and some residual terms $\{\nu_k\}$, and $\{\zeta_k\}$ as shown in Lemma~\ref{lm:noisedecompbound}. Then the key step is to introduce a different sequence of hypothetical iterates $(\tt_k,\ty_k,\tz_k)$ for which the noise is small enough so that we can bound $\expec{V(\tt_k,\tz_k)}{}$, and then show that these hypothetical iterates and the original sequence generated by Algorithm~\ref{alg:asa} are close enough so that $\expec{V(\theta_k,z_k)}{}$ is of the same order as $\expec{V(\tt_k,\tz_k)}{}$. This forms the main novelty in our analysis.

Specifically, the hypothetical sequence that we consider is given by:
\begin{align}
& \tt_0=\theta_0 \quad \tz_0=z_0\\
    &\ty_k=\argmin_{y\in\Theta}\left\lbrace\left\langle \tz_k,y-\tt_k\right\rangle+\frac{\beta}{2}\norm{y-\tt_k}_2^2\right\rbrace \label{eq:tilde1}\\
    &\tt_{k+1}=\tt_k+\eta_{k+1}(\ty_k-\tt_k)\label{eq:tilde2}\\
    %&\tz_{k+1}=(1-a\eta_{k+1})\tz_k+a\eta_{k+1}\left(\nabla f(\theta_k)+\tilde{\epsilon}_{k+1}\right)\\
    &\tz_{k+1}=z_{k+1}+\tilde{\zeta}_{k+2}.\label{eq:tilde3}
\end{align}
This also means that we have
\begin{align*}
    \tz_{k+1}=(1-a\eta_{k+1})\tz_k+a\eta_{k+1}\left(\nabla f(\theta_k)+\tilde{\epsilon}_{k+1}\right),\numberthis\label{eq:ztildek1update}
\end{align*}
where,
$\tilde{\epsilon}_{k}=e_{k}+\nu_k+\tilde{\zeta}_k$. Note that by Lemma~\ref{lm:noisedecompbound}, $\expec{e_k}{}=0$, and $\expec{\norm{\nu_k+\tilde{\zeta}_{k}}_2}{}\leq \eta_k$. First we show that by choosing $$\eta_k=(N+k)^{-a}, \quad 1/2<a<1 \quad \text{and} \quad t_k=1/\eta_k^2,$$ one has that 
$$
\expec{\|\tt_k-\theta_k\|_2^2}{}=\mathcal{O}\left(N^{2-4a}\right),\quad\text{and}\quad \expec{V(\theta_k,z_k)}{}\leq 2\expec{V(\tt_k,\tz_k)}{}+\mathcal{O}\left(N^{2-4a}\right).$$ 
Then we establish a similar bound on $V(\tt_k,\tz_k)$. Combining the above two facts proves Theorem~\ref{th:mainthm}. The full proof is deferred to Appendix~\ref{pf:mainthm}.
%\vspace{-0.05in}
\subsection{State-independent Markov Chain}
While our main goal in this work is to analyze Algorithm~\ref{alg:asa} for constrained nonconvex optimization with state-dependent Markov chain data, we provide the following result on the complexity of Algorithm~\ref{alg:asa} for Markov chain data with state-independent transition kernel for the sake of completion. Here we use $P$ to denote the transition kernel (as opposed to $P_\theta$ for state-dependent kernel). Note that under Assumption~\ref{as:noise}(a), for each $\theta$, the chain is $\mathcal{V}$-uniformly ergodic, and hence, exponentially mixing \cite{meyn2012markov} in the following sense.
%\vspace{-0.05in}
\begin{definition}
A Markov chain is said to be exponentially mixing, if there exists $C,r>0$ such that, for any initial state $x$, 
%\vspace{-0.05in}
\begin{align*}
    \norm{P^n(x,\cdot)-\pi}_\mathcal{V}\leq C\exp(-rn),\numberthis\label{eq:mixing}
\end{align*}
where $P^n(x,\cdot)$ is the distribution of $X_n$ with initial state $X_0=x$. 
\end{definition}
Now we present our result on the complexity of Algorithm~\ref{alg:asa} to find an $\epsilon$-stationary solution to \eqref{eq:mainproblem} for exponentially-mixing Markov chain data with state-independent transition kernel.
\textcolor{black}{
\begin{theorem}\label{th:mainthmhomog}
Let Assumption~\ref{aspt:constraint}-\ref{as:finitenoisevar} be true. Let Assumption~\ref{as:noise}(a)-(b) be true with $P_\theta$ replaced by $P$. Then, for Algorithm~\ref{alg:asa},
\begin{enumerate}[label=(\alph*)]
    \item when the projection operator is available, choosing
\begin{align}
    \eta_k=1/\sqrt{N}, \quad \beta=1 \label{eq:parchoicehomo}
\end{align}
for $k=1,2,\cdots,N$ we have  
\begin{align*}
    \expec{V(\theta_R,z_R)}{}=\mathcal{O}\left(\log N/\sqrt{N}\right),
\end{align*}
\item when Algorithm~\ref{alg:icg} is used,
choosing
\begin{align}
    \eta_k=1/\sqrt{N}, \quad t_k=\ceil{\sqrt{k}}, \quad \beta=1, \quad \omega=1, \quad \mu_i=2/(i+2) \label{eq:parchoicehomo}
\end{align}
for $k=1,2,\cdots,N$ we have  
\begin{align*}
    \expec{V(\theta_R,z_R)}{}=\mathcal{O}\left(\log N/\sqrt{N}\right),
\end{align*}
\end{enumerate}
where the expectation is taken with respect to all the randomness of the algorithm, and an independent integer random variable $R\in\{1,2,\cdots,N\}$ whose probability mass function is given by,
\begin{align*}
    P\left(R=k\right)=\eta_k/\sum_{k=1}^N\eta_k \quad k\in \{1,2,\cdots,N\}.
\end{align*}
\end{theorem}}
We defer the proof to the Appendix. 
\begin{remark}
 To find an $\epsilon$-stationary point, the total number of calls to SFO and LMO are $\mathcal{\tilde O}\left(\epsilon^{-2}\right)$, and $\mathcal{\tilde O}\left(\epsilon^{-3}\right)$, where $\mathcal{\tilde O}(\cdot)$ denotes the order ignoring logarithmic factors.  
\end{remark}
\iffalse
\begin{remark}
A comparison with the results in \cite{alacaoglu2022convergence} is in order. Note that in \cite[Theorem 8.1]{alacaoglu2022convergence}, the authors obtain a rate of $\tilde{\mathcal{O}}(\epsilon^{-2})$ under similar assumptions. The crucial difference between the setting considered in \cite{alacaoglu2022convergence}, and this work is that we allow the transition kernel of the Markov chain to be controlled by $\theta_k$. In doing so, we assume that the transition kernel is only Lipschitz smooth in $\theta$ \eqref{eq:transitionsmooth} implying that the chain does not necessarily exponentially mix. 
\end{remark}
\fi
\vspace{-0.05in}
\begin{remark}
The authors of \cite{alacaoglu2022convergence} obtain the same rate as in Theorem~\ref{th:mainthmhomog} for constrained (but projection-based) nonconvex optimization with state-independent exponentially mixing data. In the state-dependent case, since the transition kernel of the Markov chain is controlled by $\theta_k$, and the transition kernel is assumed to be only Lipschitz smooth in $\theta$ \eqref{eq:transitionsmooth}, the chain does not necessarily exponentially mix. In the state-independent case, since the chain mixes exponentially we obtain the same rate as well. While their results are for projection-based algorithms, we analyze a projection-free LMO-based algorithm since LMO is often computationally cheaper than projection.
\end{remark} 
%At every iterate $k$ the learner chooses an agent randomly to compute the stochastic gradient $\nabla L(\theta_{k-1};x_k,y_k)$ which is analogous to $\nabla F()$.
\vspace{-0.1in}
\section{Experimental Evaluation}\label{sec:expev}
\subsection{Strategic Classification}\label{sec:exp}
\begin{figure}[t]\label{fig:scnn}
    \centering
    \includegraphics[width=0.7\textwidth]{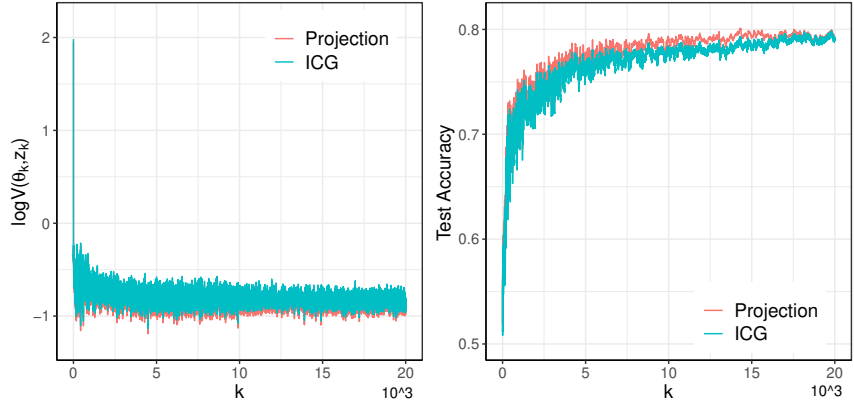}
    \caption{Strategic Classification: (\textit{Left}): Performance of Algorithm~\ref{alg:asa} with and without the projection operator. (\textit{Right}): Test Accuracy with Algorithm~\ref{alg:asa} with and without the projection operator.}
    \label{fig:my_label}
\end{figure}
In this section we illustrate our algorithm on the strategic classification problem as described in Section~\ref{sec:motivate} with the \texttt{GiveMeSomeCredit}\footnote{Available at \url{https://www.kaggle.com/c/GiveMeSomeCredit/data}} dataset. The main task is a credit score classification problem where the bank (learner) has to decide whether a loan should be granted to a client.  Given the knowledge of the classifier the clients (agents) can distort some of their personal traits  in order to get approved for a loan. Here we use a  2-layer neural network with width $m$ as the classifier, given by 
\begin{align*}
	h(u;\mathcal{W},\mathcal{A},\mathcal{B})=\sum_{i=1}^m\cA_i\upsilon(\cW_i^\top u+\cB_i),
\end{align*} 
where $\upsilon(\cdot)$ is the activation function, $\cW_i\in \mathbb{R}^d$ and  $$\cW=[\cW_1,\cW_2,\cdots,\cW_m]^\top\in\mathbb{R}^{m\times d}, \cA=\begin{pmatrix}
	\cA_1, \cA_2,\cdots, \cA_m
\end{pmatrix}\in\mathbb{R}^{m}, \cB=\begin{pmatrix}
\cB_1, \cB_2,\cdots, \cB_m
\end{pmatrix}\in\mathbb{R}^{m}.$$ We will use $\theta$ to collectively denote $(\cW,\cA,\cB)$. We impose the constraint of sparsity on the classifier given by $\|\theta\|_1\leq R$ for some $R>0$. As loss function we consider logistic loss as shown in \eqref{eq:logistic}. We consider a quadratic cost given by $c(u,u')=\norm{u_S-u_S'}_2^2/(2\lambda)$ where $\lambda$ is the sensitivity of the underlying distribution on $\theta$. We assume that the agents iteratively learn $u_S'$ similar to \cite{li2022state}. Note that unlike \cite{li2022state}, the closed form of best response is not known here. So we assume that the agents use Gradient Ascent (GA) to learn the best response. For $\|\theta\|_1\leq R$ constraint, the LMO in Algorithm~\ref{alg:icg} at iteration $k$ is given by,
\begin{align*}
    i=\underset{j=1,\cdots,d}{\argmax}\abs{q_j}\quad \text{LMO}=-R \sign\left(q_i\right),
\end{align*}
where $q=z+\beta(w_k-\theta)$, and $q_j$ is the $j$-th coordinate of $q$.
We select a subset of randomly chosen $M=2000$ samples (agents) such that the dataset is balanced. Each agent has $10$ features. Note that since Algorithm~\ref{alg:asa} computes the gradient on one sample at every iterate, the computation time is independent of the total number of agents. We assume that the agents can modify Revolving Utilization, Number of Open Credit Lines, and Number of Real Estate Loans or Lines. In this experiment we set $n_1=200$. Similar to \cite{li2022state}, we set $\alpha=0.5\lambda$, and $\lambda=0.01$.  For the classifier, the activation function is chosen as \emph{sigmoidal}, and $m=400$. We set $N=20000$, and $R=4000$. All the parameters of Algorithm~\ref{alg:asa} are chosen as described in \eqref{eq:parchoice}. Figure~\ref{fig:scnn} shows that Algorithm~\ref{alg:asa} finds an $\epsilon$-stationary point of the strategic classification problem. We show that Algorithm~\ref{alg:asa} performs comparably with Averaged Stochastic Approximation with the projection operator. Each curve in Figure~\ref{fig:scnn} is an average of 50 repetitions. 
\subsection{Single Index Model with Trace-norm Ball Constraint}\label{sec:tracenorm}
In this section we illustrate our algorithm on a synthetic example on single-index model regression with a nuclear-norm constraint on the model parameter. Let $\norm{\cdot}_*$ denote the nuclear norm. The features $\{x_k\}_k\in\mathbb{R}^{d_1\times d_2}$ are a matrix-valued time-series given by,
\begin{align*}
    x_{k}=Ax_{k-1}+E_k+W_k\upsilon \theta_k,
\end{align*}
where $A\in\mathbb{R}^{d_1\times d_1}$ matrix with spectral radius less than $1$, $E_k\in\mathbb{R}^{d_1\times d_2}$ is the noise matrix with each entry of $E_k$ is $\iid$ $N(0,1)$ random variable, $W_k$ is a $Bernoulli(0.5)$ random variable, and $\upsilon\in\mathbb{R}$. For a fixed $\theta_k=\theta$, $\{x_k\}_k$ has a stationary distribution as shown in Proposition 1 of \cite{chen2021autoregressive}. $\{E_k\}_k$, and $\{W_k\}_k$ are $\iid$ sequence. This Markov chain follows conditions
(b) and (c) of Assumption~\ref{as:noise} since the evolution of $x_k$ only involves linear terms in $\theta_k$. The responses $\{y_k\}_k$ are generated according to the following single index model,
\begin{align*}
    y_k=g(x_k^\top\theta^*)+\tilde{E}_k,
\end{align*}
where $\{\tilde{E}_k\}_k$ is an $\iid$ sequence of standard normal random variables, $\theta^*\in\mathbb{R}^{d_1\times d_2}$ is a matrix with $\norm{\theta^*}_*\leq 1$, and $g(\cdot):\mathbb{R}\to\mathbb{R}$ is the link function. For this experiment we choose $g(x)=3x+5\sin(x)$. Since $y_k$ only depends on $x_k$, and $g$ is a Lipschitz continuous function of $\theta$, Assumption~\ref{as:noise} holds for $(x_k,y_k)$. It is easy to see that Assumptions ~\ref{aspt:constraint} - \ref{as:finitenoisevar} holds for this example. The constraint set is given by $\norm{\theta}_*\leq 1$, i.e., we assume that $\theta^*$ has a low-rank structure. The goal is to minimize the expected squared loss with the constraint $\norm{\theta}_*\leq 1$, i.e., 
\begin{align*}
    \underset{\norm{\theta}_*\leq 1}{\min}~\expec{(y-g(x^\top\theta))^2}{}.\numberthis\label{eq:trnormprob}
\end{align*}
The advantages of conditional-gradient based method for nuclear-norm ball constrained problems have been studied extensively \cite{jaggi2010simple,jaggi2013revisiting,harchaoui2015conditional}. The main advantage of ICG-based method is that calculating the LMO in this case requires computation of the leading singular vector of gradient matrix whereas to calculate the projection on the trace-norm ball one needs to compute the complete singular value decomposition. Let $u_1$, $v_1$ are the leading left and right singular vectors of the noisy gradient matrix evaluated at $(\theta;x,y)$, $-2(y-g(x^\top\theta))g'(x^\top\theta)x$. Then the LMO is given by, 
\begin{align*}
    \text{LMO}=-u_1v_1^\top.
\end{align*}
For this experiment we choose $d_1=10$, $d_2=20$, $\upsilon=0.1$, and $N=2000$. Rest of the parameters of Algorithm~\ref{alg:asa} are chosen according to Theorem~\ref{th:mainthm}. In Figure~\ref{fig:tracenorm}, we compare the projection-based and ICG based version of Algorithm~\ref{alg:asa} with respect to $V(\theta_k,z_k)$, test Mean Squared Error (MSE), and $\norm{\theta_k-\theta^*}_F$ where $\norm{\cdot}_F$ is the Fröbenius norm. Figure~\ref{fig:tracenorm} shows that the performance of projection-based and the ICG-based versions of Algorithm~\ref{alg:asa} are almost same. Each plot in Figure~\ref{fig:tracenorm} is the average of 50 repetitions.
\begin{figure}[t]\label{fig:tracenorm}
    \centering
    \includegraphics[width=\textwidth]{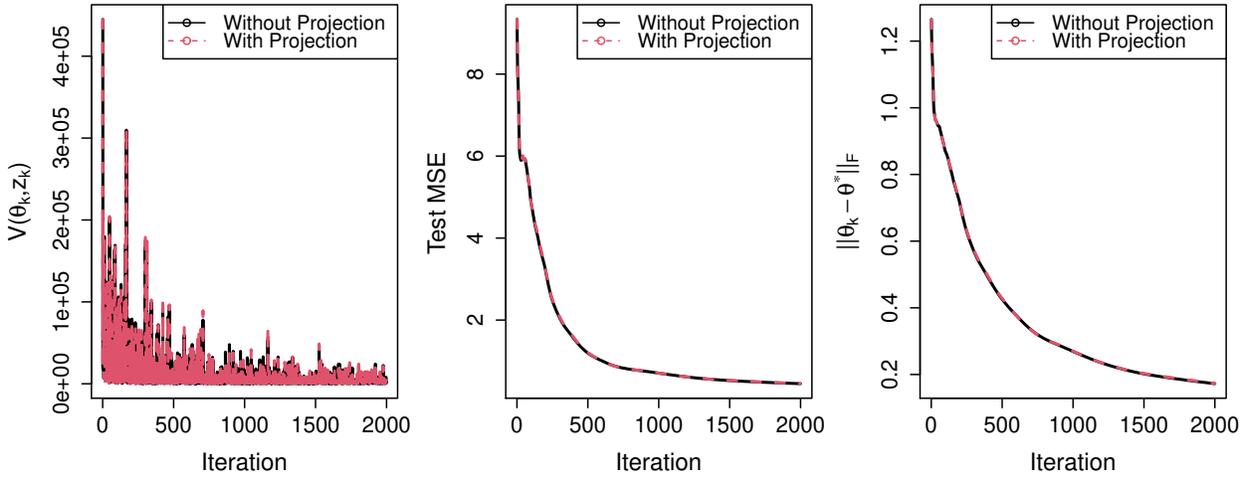}
    \caption{Single-index model with nuclear-norm constraint: (\textit{Left}): Performance of Algorithm~\ref{alg:asa} with and without the projection operator. (\textit{Middle}): Test Mean Squared Error (MSE) with Algorithm~\ref{alg:asa} with and without the projection operator. (\textit{Right}): $\norm{\theta-\theta^*}_F$ with Algorithm~\ref{alg:asa} with and without the projection operator.}
    \label{fig:tracenorm}
\end{figure}
\vspace{-0.1in}
\section{Discussion}
In this work we provide oracle complexity results for the stochastic conditional gradient algorithm to find an $\epsilon$-stationary point of a constrained nonconvex optimization problem with state-dependent Markovian data. In Theorem~\ref{th:mainthm}, we show that the number of calls to the SFO and LMO required by the stochastic conditional gradient-type method in Algorithm~\ref{alg:asa}, with \textit{state-dependent} Markovian data, is $\mathcal{O}(\epsilon^{-2.5})$ and $\mathcal{O}(\epsilon^{-5.5})$ respectively. To the best of our knowledge, these are the first oracle complexity results in this setting. In Theorem~\ref{th:mainthmhomog}, we  show also that the SFO and LMO complexity in the case of state-independent Markovian data is $\mathcal{\tilde{O}}(\epsilon^{-2})$ and $\mathcal{\tilde{O}}(\epsilon^{-3})$ respectively, which matches the corresponding results in the $\iid$ setting. 

There are various avenues for further extensions. Improving the established complexities and/or proving matching lower bounds on the oracle complexity of projection-free algorithms in the Markovian setting is extremely interesting. It is also intriguing to establish upper and lower bounds on the oracle complexity for more general types of dependent data sequences arising in applications, including $\phi$ and $\alpha$-mixing sequences. Yet another exciting direction is that of designing algorithms adaptive to the dependency in the data that achieve potentially better oracle complexity bounds.

\appendix
\appendix
\section{Proofs for the State-dependent Case}

Before proving Lemma~\ref{lm:noisedecompbound}, we present the following result on Poisson equation solution from \cite{liang2010trajectory}, and \cite{andrieu2005stability} which is crucial to the proof of Lemma~\ref{lm:noisedecompbound}.
\begin{lemma}[\cite{liang2010trajectory}]\label{lm:poisregular}
Let Assumption~\ref{as:noise} be true. Then we have the following:
\begin{enumerate}[label=(\alph*)]
    \item For any $\theta\in\Theta$, the Markov kernel $P_\theta$ has a unique stationary distribution $\pi_\theta$. Moreover, $\nabla F(\theta,x):\Theta\times\mathbb{R}^d\to\Theta$ is measurable for all $\theta\in\Theta$, and $\expec{\nabla F(\theta,x)}{x\sim\pi_\theta}<\infty$.
    \item For any $\theta\in\Theta$, the solution to the Poisson equation~\eqref{eq:poissoneq} exists.  %$$u(\theta,x)-P_\theta u(\theta,x)=\nabla F(\theta,x)-\nabla f(\theta),$$ where $P_\theta u(\theta,x)=\int_{\mathbb{R}^d}u(\theta,x')P_\theta (x,x')dx'$, exists. 
    Furthermore, there exist a function $\mathcal{V}:\mathbb{R}^d\to[1,\infty)$ such that for all $\theta\in\Theta$, the following holds:
    \begin{enumerate}[label=(\roman*)]
        \item $\sup_{\theta\in\Theta}\norm{\nabla F(\theta,x)}_\mathcal{V}<\infty$,
        \item $\sup_{\theta\in\Theta}\left(\norm{u(\theta,x)}_\mathcal{V}+\norm{P_\theta u(\theta,x)}_\mathcal{V}\right)<\infty$,
        \item $\sup_{\theta\in\Theta}\left(\norm{u(\theta,x)-u(\theta',x)}_\mathcal{V}+\norm{P_\theta u(\theta,x)-P_{\theta'} u(\theta',x)}_\mathcal{V}\right)<\norm{\theta-\theta'}_2$.
    \end{enumerate}
\end{enumerate}
\end{lemma}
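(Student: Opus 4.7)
The plan is to derive everything from the drift-minorization condition in Assumption~\ref{as:noise}(a), which is well known to imply $\mathcal{V}^{\alpha_0}$-uniform ergodicity uniformly over $\theta\in\Theta'$. Concretely, I would first invoke a standard Meyn-Tweedie type theorem (e.g., from \cite{douc2018markov}): the $l$-step drift $P_\theta^l \mathcal{V}^{\alpha_0}\le \lambda \mathcal{V}^{\alpha_0}+bI_C$, together with the minorization $\inf_{\theta\in\Theta'}P_\theta^l(x,\cdot)\ge \delta\nu(\cdot)$ on $C$, gives existence and uniqueness of an invariant probability measure $\pi_\theta$ and a geometric rate $\rho\in(0,1)$ and constant $K<\infty$ such that $\|P_\theta^n(x,\cdot)-\pi_\theta\|_{\mathcal{V}^{\alpha_0}}\le K\rho^n \mathcal{V}^{\alpha_0}(x)$ for all $\theta\in\Theta'$ and $x\in\mathbb{R}^d$, with $\rho,K$ independent of $\theta$. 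Part (a) then follows: measurability of $\nabla F(\theta,\cdot)$ is immediate from Assumption~\ref{as:noise}(b), and since $\sup_\theta \|\nabla F(\theta,\cdot)\|_{\mathcal{V}}\le c$ while $\pi_\theta(\mathcal{V})<\infty$ (a byproduct of the drift condition applied to $\mathcal{V}^{\alpha_0}\ge\mathcal{V}$), the expectation $\mathbb{E}_{x\sim\pi_\theta}[\nabla F(\theta,x)]$ is finite.

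For part (b), I would define the candidate solution by the standard series
\begin{equation*}
u(\theta,x) \;=\; \sum_{n=0}^{\infty}\bigl(P_\theta^n \nabla F(\theta,\cdot)(x)\,-\,\pi_\theta(\nabla F(\theta,\cdot))\bigr),
\end{equation*}
and verify that it satisfies the Poisson equation~\eqref{eq:poissoneq} termwise. Absolute convergence in $\mathcal{V}$-norm follows from geometric ergodicity and Assumption~\ref{as:noise}(b): each summand is bounded by $cK\rho^n \mathcal{V}(x)$, so $\|u(\theta,\cdot)\|_{\mathcal{V}}\le cK/(1-\rho)$ uniformly in $\theta\in\Theta'$, giving (ii) once we apply $P_\theta$ to the series and use $\|P_\theta u(\theta,\cdot)\|_\mathcal{V}\le \|u(\theta,\cdot)\|_\mathcal{V}+\|\nabla F(\theta,\cdot)-\pi_\theta(\nabla F)\|_\mathcal{V}$. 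Property (i) is exactly Assumption~\ref{as:noise}(b). The Poisson identity $u-P_\theta u=\nabla F(\theta,\cdot)-\pi_\theta(\nabla F(\theta,\cdot))$ is then a telescoping computation term by term.

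The hard part, and the real content, is the Lipschitz-in-$\theta$ bound (iii). Here I would use the standard perturbation identity
\begin{equation*}
P_\theta^n g - P_{\theta'}^n g \;=\; \sum_{k=0}^{n-1} P_\theta^{n-1-k}\bigl(P_\theta - P_{\theta'}\bigr)P_{\theta'}^{k} g,
\end{equation*}
apply Assumption~\ref{as:noise}(c) to the middle factor to extract $c\|\theta-\theta'\|_2\|P_{\theta'}^k g\|_\mathcal{V}$, and then use geometric ergodicity on the centered iterate $P_{\theta'}^k g-\pi_{\theta'}(g)$ together with the drift bound to control $\|P_{\theta'}^k g\|_\mathcal{V}$ by a constant times $\|g\|_\mathcal{V}$, and $\|P_\theta^{n-1-k}(\,\cdot\,)\|_\mathcal{V}$ again by geometric contraction on centered functions. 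Summing over $n$ and $k$ yields $\|u(\theta,\cdot)-u(\theta',\cdot)\|_\mathcal{V}\le C\|\theta-\theta'\|_2$; the Lipschitz continuity of $\theta\mapsto \pi_\theta(\nabla F(\theta,\cdot))$ — which I also need for the centering — comes from the same perturbation expansion combined with the Lipschitz property in Assumption~\ref{as:noise}(b) on $\nabla F$ itself. The bound on $P_\theta u(\theta,\cdot)-P_{\theta'} u(\theta',\cdot)$ then decomposes as $P_\theta(u(\theta,\cdot)-u(\theta',\cdot)) + (P_\theta-P_{\theta'})u(\theta',\cdot)$ and both pieces are controlled by what we have just proved plus Assumption~\ref{as:noise}(c).

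The main obstacle is making the geometric ergodicity rate and constants $\rho,K$ uniform over $\theta\in\Theta'$ (so that all the series converge uniformly) and simultaneously uniform across the two levels $\mathcal{V}$ and $\mathcal{V}^{\alpha_0}$; the latter is needed because Assumption~\ref{as:noise}(c) gives the perturbation bound in both norms, and the drift conditions are only assumed at level $\mathcal{V}^{\alpha_0}$. The resolution is to work entirely at level $\mathcal{V}^{\alpha_0}$ to get uniform geometric ergodicity, then use $\mathcal{V}\le \mathcal{V}^{\alpha_0}$ (since $\mathcal{V}\ge 1$ and $\alpha_0\ge 2$) to transfer the bounds back to the $\mathcal{V}$-norm required in the statement. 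Once this uniformity is in hand, everything else is bookkeeping on telescoping sums, and the conclusions (i)-(iii) drop out. Since this is essentially the argument of~\cite{liang2010trajectory,andrieu2005stability}, I would mostly cite their derivation and only redo the steps above that make explicit the uniformity in $\theta$ needed for the downstream use in Lemma~\ref{lm:noisedecompbound}.
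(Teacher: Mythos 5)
The paper does not actually prove this lemma: it is imported verbatim from \cite{liang2010trajectory} and \cite{andrieu2005stability}, so there is no in-paper argument to compare against. Your sketch reconstructs the standard derivation that those references use --- drift plus minorization giving $\theta$-uniform geometric ergodicity, the fundamental-solution series $u(\theta,\cdot)=\sum_{n\ge 0}\bigl(P_\theta^n \nabla F(\theta,\cdot)-\pi_\theta(\nabla F(\theta,\cdot))\bigr)$ for part (b), and the telescoping perturbation identity for the Lipschitz-in-$\theta$ estimate (iii) --- and the overall blueprint is sound, including your correct identification that the Lipschitz continuity of $\theta\mapsto\pi_\theta(\nabla F(\theta,\cdot))$ must be established alongside (iii) to handle the centering terms.

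One step as written would fail: the norm transfer at the end goes in the wrong direction. Since $\norm{g}_{\mathcal{V}}=\sup_x \norm{g(x)}_2/\mathcal{V}(x)$ and $\mathcal{V}\le\mathcal{V}^{\alpha_0}$, you get $\norm{g}_{\mathcal{V}^{\alpha_0}}\le\norm{g}_{\mathcal{V}}$, so a bound established ``entirely at level $\mathcal{V}^{\alpha_0}$'' yields only $\norm{u(\theta,\cdot)}_{\mathcal{V}^{\alpha_0}}<\infty$, which is \emph{weaker} than the $\norm{u(\theta,\cdot)}_{\mathcal{V}}<\infty$ claimed in (ii) and needed downstream (the noise decomposition in Lemma~\ref{lm:noisedecompbound} controls $\expec{\norm{P_{\theta_k}u(\theta_k,x_k)}_2}{}$ via the $\mathcal{V}$-norm). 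The standard fix, which is what \cite{andrieu2005stability} does, is to note that the drift inequality for $\mathcal{V}^{\alpha_0}$ implies the drift inequality for $\mathcal{V}^{\beta}$ for every $0<\beta\le\alpha_0$ by Jensen's inequality applied to the concave map $t\mapsto t^{\beta/\alpha_0}$; in particular the drift and hence uniform geometric ergodicity hold directly in the $\mathcal{V}$-norm, and all the series converge there. With that substitution your argument goes through; the two-level structure ($\mathcal{V}$ and $\mathcal{V}^{\alpha_0}$) is then only needed to accommodate the two perturbation bounds in Assumption~\ref{as:noise}(c) and the moment conditions, exactly as in the cited references.
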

We now prove Lemma~\ref{lm:noisedecompbound}.
\vspace{0.1in}
\iffalse
\begin{remark}
Note that Assumption \text{$A_1$} of \cite{liang2010trajectory} is true since $f$ is assumed to be continuously differentiable \cite{tadic1997convergence}. 
\end{remark}
\fi
\begin{proof}[Proof of Lemma~\ref{lm:noisedecompbound}]\label{pf:noisedecompbound}
	Let, 
	\begin{align}
	\begin{aligned}
	  &e_{k+1}=u(\theta_k,x_{k+1})-P_{\theta_k}u(\theta_k,x_{k})\\
	 &\nu_{k+1}=P_{\theta_{k+1}}u(\theta_{k+1},x_{k+1})-P_{\theta_{k}}u(\theta_{k},x_{k+1})+\frac{\eta_{k+2}-\eta_{k+1}}{\eta_{k+1}}P_{\theta_{k+1}}u(\theta_{k+1},x_{k+1})\\
	 &\tilde{\zeta}_{k+1}=\eta_{k+1}P_{\theta_{k}}u(\theta_{k},x_{k})\\
	 &\zeta_{k+1}=\frac{\tilde{\zeta}_{k+1}-\tilde{\zeta}_{k+2}}{\eta_{k+1}}. 
	\end{aligned}\label{eq:compdef}
	\end{align}
Now, one has, 
\begin{align*}
	\expec{e_{k+1}|\cF_{k}}{}=\expec{u(\theta_k,x_{k+1})|\cF_k}{}-P_{\theta_k}u(\theta_k,x_{k})=0
\end{align*}
We also have $\expec{\abs{e_{k+1}}}{}<\infty$. So $e_{k+1}$ is a martingale difference sequence. 
We also have, using Lemma~\ref{lm:poisregular}, and the fact that $\Theta$ is compact,
\begin{align*}
	\expec{\norm{\nu_{k+1}}_2}{}\leq c_1\norm{\theta_k-\theta_{k+1}}_2+c_2\eta_{k+2}\leq c_1\eta_{k+1}\norm{y_k-\theta_k}_2+c_2\eta_{k+2}\leq c_3\eta_{k+1}.
\end{align*}
Again, using Lemma~\ref{lm:poisregular}, we have
\begin{align*}
	\expec{\norm{\tilde{\zeta}_{k+1}}_2}{}\leq \eta_{k+1}\expec{\norm{P_{\theta_{k}}u(\theta_{k},x_{k})}_2}{}\leq c_4\eta_{k+1},
\end{align*}
where $c_i,\ i=1,2,3,4$ are constants. 
\end{proof}
\subsection{Proof of Theorem~\ref{th:mainthm}}
Before proving Theorem~\ref{th:mainthm}, we start with a few preliminaries. Let
\begin{align*}
    y_k'=\argmin_{y\in\Theta}\left\lbrace\left\langle z_k,y-\theta_k\right\rangle+\frac{\beta}{2}\norm{y-\theta_k}_2^2\right\rbrace,
\end{align*}
and let $\|y_k-y_k'\|_2\leq \delta_k$. 
We need the following result from \cite{jaggi2013revisiting} which bounds the distance between iterates generated by Algorithm~\ref{alg:icg}, $y_k$, and $y_k'$:
\begin{lemma}[\cite{jaggi2013revisiting}]
Under Assumption~\ref{aspt:constraint}, 
\begin{align*}
    \norm{y_k-y_k'}_2^2\leq \frac{4\mathcal{D}_\Theta^2(1+\omega)}{t_k+2},
\end{align*}
where $\omega$ is the accuracy of the LMO. 
\end{lemma}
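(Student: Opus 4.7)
The plan is to recognize that Algorithm~\ref{alg:icg} is precisely the Frank-Wolfe scheme of Jaggi applied to the strongly convex quadratic subproblem
\[
\phi(w) \coloneqq \langle z_k, w-\theta_k\rangle + \frac{\beta}{2}\|w-\theta_k\|_2^2,
\]
whose unique minimizer over $\Theta$ is $y_k'$ and whose iterate sequence is $\{w_i\}$ with $y_k = w_{t_k}$. The lemma is thus a statement about the iterate-distance suboptimality of $w_{t_k}$, which I will bound in two stages: first a Frank-Wolfe function-gap bound (citing Jaggi), then a conversion to squared iterate distance via strong convexity.

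First I would check that the inexact linear oracle in Algorithm~\ref{alg:icg} matches Jaggi's approximation model. Note $\nabla \phi(w_i) = z + \beta(w_i - \theta)$, so the step in Algorithm~\ref{alg:icg} asks for $v_i \in \Theta$ with
\[
\langle v_i, \nabla\phi(w_i)\rangle \leq \min_{v\in\Theta}\langle v, \nabla\phi(w_i)\rangle + \frac{\beta\omega \mathcal{D}_\Theta^2}{i+2} = \min_{v\in\Theta}\langle v, \nabla\phi(w_i)\rangle + \frac{\mu_i}{2}\,\omega\,\beta\mathcal{D}_\Theta^2.
\]
Since $\phi$ has $\beta$-Lipschitz gradient, its curvature constant over the diameter-$\mathcal{D}_\Theta$ set $\Theta$ satisfies $C_\phi \leq \beta\mathcal{D}_\Theta^2$. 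Identifying $\delta := \omega$ in Jaggi's framework, this additive tolerance is exactly of the form $\tfrac{1}{2}\mu_i \delta C_\phi$ allowed there. Applying Jaggi's standard convergence theorem for Frank-Wolfe with $\omega$-approximate LMO then yields
\[
\phi(w_{t_k}) - \phi(y_k') \leq \frac{2 C_\phi (1+\omega)}{t_k+2} \leq \frac{2\beta \mathcal{D}_\Theta^2 (1+\omega)}{t_k+2}.
\]

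Second, I would exploit the $\beta$-strong convexity of $\phi$. Because $y_k' = \arg\min_{y\in\Theta}\phi(y)$ is a constrained minimizer, the first-order optimality condition gives $\langle \nabla\phi(y_k'), w_{t_k}-y_k'\rangle \geq 0$, so strong convexity implies the quadratic growth inequality
\[
\phi(w_{t_k}) - \phi(y_k') \geq \langle \nabla\phi(y_k'), w_{t_k}-y_k'\rangle + \frac{\beta}{2}\|w_{t_k}-y_k'\|_2^2 \geq \frac{\beta}{2}\|w_{t_k}-y_k'\|_2^2.
\]
Chaining the two displays and recalling $y_k = w_{t_k}$ produces the claimed bound
\[
\|y_k-y_k'\|_2^2 \leq \frac{2}{\beta}\cdot\frac{2\beta \mathcal{D}_\Theta^2(1+\omega)}{t_k+2} = \frac{4\mathcal{D}_\Theta^2(1+\omega)}{t_k+2}.
\]

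There is no real obstacle here beyond bookkeeping: the proof is essentially a two-line combination of a cited Frank-Wolfe rate with a quadratic-growth step. The only place where one must be careful is in matching the constants of the inexact LMO in Algorithm~\ref{alg:icg} to Jaggi's parametrization and in verifying $C_\phi \leq \beta \mathcal{D}_\Theta^2$; both are immediate from the explicit quadratic form of $\phi$ and the boundedness of $\Theta$ assumed in Assumption~\ref{aspt:constraint}.
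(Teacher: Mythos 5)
Your proof is correct and follows exactly the route the paper intends: the paper gives no argument of its own but simply cites Jaggi's primal convergence rate for Frank--Wolfe with a $\delta$-approximate LMO, and your two steps (matching the inexact-oracle tolerance to Jaggi's $\tfrac{1}{2}\mu_i\,\omega\,C_\phi$ form with $C_\phi=\beta\mathcal{D}_\Theta^2$, then converting the function gap to squared distance via the $\beta$-strong convexity and first-order optimality of $y_k'$) are precisely the missing bookkeeping. The constants work out exactly as claimed.
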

\noindent Consider the following system:
\begin{align}
   & \tt_0=\theta_0 \quad \tz_0=z_0 \label{eq:tilde0}\\
    &\ty_k=\argmin_{y\in\Theta}\left\lbrace\left\langle \tz_k,y-\tt_k\right\rangle+\frac{\beta}{2}\norm{y-\tt_k}_2^2\right\rbrace \label{eq:tilde1}\\
    &\tt_{k+1}=\tt_k+\eta_{k+1}(\ty_k-\tt_k)\label{eq:tilde2}\\
    %&\tz_{k+1}=(1-a\eta_{k+1})\tz_k+a\eta_{k+1}\left(\nabla f(\theta_k)+\tilde{\epsilon}_{k+1}\right)\\
    &\tz_{k+1}=z_{k+1}+\tilde{\zeta}_{k+1}.\label{eq:tilde3}
\end{align}
Equivalently one can also write:
\begin{align}
\ty_k=\Pi_{\Theta}\left(\tt_k-\frac{1}{\beta}\tz_k\right),\label{eq:tilde4}
\end{align}
where $\Pi_\Theta$ is the orthogonal projection on the set $\Theta$.
\iffalse
and
\begin{align*}
    \tz_{k+1}=(1-a\eta_{k+1})\tz_k+a\eta_{k+1}\left(\nabla f(\theta_k)+\tilde{\epsilon}_{k+1}\right),\numberthis\label{eq:ztildek1update}
\end{align*}
where,
$\tilde{\epsilon}_{k}=e_{k}+\nu_k+\tilde{\zeta}_k$.
\fi
Let $\phi(\theta,z)$ be the following function:
\begin{align*}
    \phi(\theta,z)=\min_{y\in\Theta}\left(\left\langle z,y-\theta\right\rangle+\frac{\beta}{2}\|y-\theta\|_2^2\right).\numberthis\label{eq:phidef}
\end{align*}
By \cite[Lemma 4]{ghadimi2020single}, the function $\phi(\theta,z)$ has Lipschitz continuous gradient with Lipschitz constant $L_\phi$ for some $L_\phi>0$.
Define the merit function
\begin{align*}
    W(\theta,z)\coloneqq(f(\theta)-f^*)-\phi(\theta,z).\numberthis\label{eq:merit}
\end{align*}
Recall that as optimality measure we use the following:
\begin{align}\label{eq:vv}
    V(\theta_k,z_k)=\norm{\Pi_\Theta\left(\theta_k-\frac{z_k}{\beta}\right)-\theta_k}_2^2+\norm{z_k-\nabla f(\theta_k)}_2^2.
\end{align}

We now introduce a few intermediate results that are crucial in proving Theorem~\ref{th:mainthm}.  Specifically, in Lemma~\ref{lm:tildeorigdiff}, we show that the iterates generated by the auxiliary updates $\tt_k$ are close to the original updates $\theta_k$ of Algorithm~\ref{alg:asa}. Next, in Lemma~\ref{lm:vrelationorigtilde} we show that $V(\theta_k,z_k)$ is close to $V(\tt_k,\tz_k)$. 
Next, note that from ~\eqref{eq:vv}, we have 
$$
\sum_{k=1}^N\eta_kV(\tt_k,\tz_k) = \sum_{k=1}^N\eta_k\norm{\Pi_\Theta\left(\tt_k-\frac{\tz_k}{\beta}\right)-\tt_k}_2^2 + \sum_{k=1}^N\eta_k\norm{\tz_k-\nabla f(\tt_k)}_2^2
$$
We bound the first and the second term in the right hand side of the above equation in Lemma~\ref{lm:Vykthetakcompbound}, Lemma~\ref{lm:rk1bound} and \eqref{eq:gradfzwtdiffnormbound} respectively. 

%bounds the first component of $\sum_{k=1}^N\eta_kV(\tt_k,\tz_k)$, namely, $\sum_{k=1}^N\eta_k\norm{\Pi_\Theta\left(\tt_k-\frac{\tz_k}{\beta}\right)-\tt_k}_2^2$. Finally, we bound the second component $\sum_{k=1}^N\eta_k\norm{\tz_k-\nabla f(\tt_k)}_2^2$ in \eqref{eq:gradfzwtdiffnormbound}. 
\begin{lemma}\label{lm:tildeorigdiff}
Let the conditions of Lemma~\ref{lm:noisedecompbound} hold. Then, for $k\geq 1$, and for any $\gamma\in\mathbb{R}$, we have
\begin{align*}
    %&\expec{\norm{\tt_{k}-\theta_{k}}_2}{}\leq \sum_{i=1}^{k}\left(\frac{\eta_{i}^2}{\beta}+\eta_{i}\expec{\delta_{i-1}}{}\right),\\
    \expec{\norm{\tt_{k}-\theta_{k}}_2^2}{}\leq &\eta_{k}^3(1+\eta_{k}^{-\gamma})+2\sum_{i=1}^{k-1}\eta_{i}^3\left(1+\eta_i^{-\gamma}\right)\prod_{j=i+1}^{k}\left(1+\eta_{j}^{1+\gamma}\right)\\
    &+2\sum_{i=1}^{k-1}\eta_{i}\expec{\delta_{i-1}^2}{}\left(1+\eta_i^{-\gamma}\right)\prod_{j=i+1}^{k}\left(1+\eta_{j}^{1+\gamma}\right).\numberthis
\end{align*}
\end{lemma}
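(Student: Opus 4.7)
}

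My plan is to derive a one-step recursion on $\expec{\|\tt_k-\theta_k\|_2^2}{}$ and then unroll it. Starting from the definitions \eqref{eq:tilde2} and the update of $\theta_k$ in Algorithm~\ref{alg:asa}, I first write
\begin{align*}
\tt_{k+1}-\theta_{k+1} \;=\; (1-\eta_{k+1})(\tt_k-\theta_k) + \eta_{k+1}(\ty_k-y_k),
\end{align*}
so that the one-step increase is driven entirely by the discrepancy $\ty_k-y_k$ between the exact projection update in \eqref{eq:tilde4} and the ICG output $y_k$ produced by Algorithm~\ref{alg:icg}.

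Next I decompose $\ty_k-y_k=(\ty_k-y_k')+(y_k'-y_k)$, where $y_k'=\Pi_{\Theta}(\theta_k-z_k/\beta)$ is the exact projection at the original iterates. Non-expansiveness of $\Pi_\Theta$ together with $\tz_k-z_k=\tilde{\zeta}_{k+1}$ (cf.\ \eqref{eq:tilde3}) gives
\begin{align*}
\|\ty_k-y_k'\|_2 \;\leq\; \|\tt_k-\theta_k\|_2 + \tfrac{1}{\beta}\|\tilde{\zeta}_{k+1}\|_2,
\end{align*}
while the ICG accuracy bound from \cite{jaggi2013revisiting} gives $\|y_k'-y_k\|_2\leq \delta_k$. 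Triangle inequality then yields
\begin{align*}
\|\tt_{k+1}-\theta_{k+1}\|_2 \;\leq\; \|\tt_k-\theta_k\|_2 + \tfrac{\eta_{k+1}}{\beta}\|\tilde{\zeta}_{k+1}\|_2 + \eta_{k+1}\delta_k .
\end{align*}
Squaring both sides and applying Young's inequality with the free parameter $\eta_{k+1}^{1+\gamma}$, i.e.\ $(a+b)^2\leq (1+\eta_{k+1}^{1+\gamma})a^2+(1+\eta_{k+1}^{-(1+\gamma)})b^2$, followed by $(b_1+b_2)^2\leq 2b_1^2+2b_2^2$ on the residual, produces a recursion of the form
\begin{align*}
\expec{\|\tt_{k+1}-\theta_{k+1}\|_2^2}{} \;\leq\; (1+\eta_{k+1}^{1+\gamma})\,\expec{\|\tt_k-\theta_k\|_2^2}{} \;+\; 2\eta_{k+1}^{3}(1+\eta_{k+1}^{-\gamma}) \;+\; 2\eta_{k+1}(1+\eta_{k+1}^{-\gamma})\expec{\delta_k^2}{}.
\end{align*}
Here I need the second-moment bound $\expec{\|\tilde{\zeta}_{k+1}\|_2^2}{}\leq C\eta_{k+1}^2$; this is not quite stated in Lemma~\ref{lm:noisedecompbound}, but it follows from the explicit expression $\tilde{\zeta}_{k+1}=\eta_{k+1}P_{\theta_k}u(\theta_k,x_k)$ in \eqref{eq:compdef}, the $\mathcal{V}$-regularity of $P_\theta u$ in Lemma~\ref{lm:poisregular}, and the $\mathcal{V}^{\alpha_0}$-uniform ergodicity in Assumption~\ref{as:noise}(a) (with $\alpha_0\geq 2$), which guarantees a uniform bound on $\expec{\mathcal{V}^2(x_k)}{}$. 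This upgrade of the first-moment bound to a second-moment bound is the main obstacle I anticipate.

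Finally, using $\tt_0=\theta_0$ and $\tz_0=z_0$ from \eqref{eq:tilde0} so that the recursion has zero initial value, I unroll the inequality to obtain
\begin{align*}
\expec{\|\tt_k-\theta_k\|_2^2}{} \;\leq\; 2\eta_k^{3}(1+\eta_k^{-\gamma}) + 2\sum_{i=1}^{k-1}\bigl[\eta_i^{3}(1+\eta_i^{-\gamma}) + \eta_i(1+\eta_i^{-\gamma})\expec{\delta_{i-1}^2}{}\bigr]\prod_{j=i+1}^{k}(1+\eta_j^{1+\gamma}),
\end{align*}
which matches the claimed bound (absorbing the $\expec{\delta_{k-1}^2}{}$ contribution from the last step of the recursion into the sum, and noting that the leading constant in front of $\eta_k^3(1+\eta_k^{-\gamma})$ in the statement can be tightened by keeping the $B_k$ term separate). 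The argument uses only non-expansiveness of projection, Young's inequality, and the regularity of the Poisson solution, and introduces $\gamma$ purely to keep flexibility for choosing the right exponents in the subsequent analysis.
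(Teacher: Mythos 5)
Your proof follows essentially the same route as the paper's: the same recursion $\tt_{k+1}-\theta_{k+1}=(1-\eta_{k+1})(\tt_k-\theta_k)+\eta_{k+1}(\ty_k-y_k)$, the same splitting of $\ty_k-y_k$ through the exact solution $y_k'$ using non-expansiveness of $\Pi_\Theta$ and the ICG accuracy $\delta_k$, Young's inequality with the $\gamma$-dependent parameter, and unrolling from $\tt_0=\theta_0$; your minor reordering (triangle inequality before squaring, rather than Jensen on the convex combination first) only affects unimportant constants. Your remark that a second-moment bound $\expec{\|\tilde{\zeta}_{k+1}\|_2^2}{}=\mathcal{O}(\eta_{k+1}^2)$ is required is apt---the paper's own proof invokes Lemma~\ref{lm:noisedecompbound} for this even though that lemma only states a first-moment bound---and your sketch of how to obtain it from the explicit expression in \eqref{eq:compdef}, Lemma~\ref{lm:poisregular}, and the moment control on $\mathcal{V}$ under Assumption~\ref{as:noise}(a) is the right way to close that gap.
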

\begin{lemma}\label{lm:vrelationorigtilde}
Let the conditions of Lemma~\ref{lm:noisedecompbound} be true. Then, choosing $\eta_k=(N+k)^{-a}$, $a>1/2$, and setting $\gamma$ of Lemma~\ref{lm:tildeorigdiff} to $\gamma=1/a-1$, for $\delta_k\leq \eta_k$ we get
\begin{align*}
    \expec{V(\theta_k,z_k)}{}\leq 2\expec{V(\tt_k,\tz_k)}{}+(9+4L_G)\left(N^{1-4a}+8N^{2-4a}\right)+12N^{-2a}.
    \end{align*}
\end{lemma}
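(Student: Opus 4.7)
\textbf{Proof proposal for Lemma~\ref{lm:vrelationorigtilde}.} The strategy is to decompose $V(\theta_k,z_k)$ into $2\,V(\tt_k,\tz_k)$ plus error terms controlled by $\|\tt_k-\theta_k\|_2^2$ and $\|\tilde\zeta_k\|_2^2$, then estimate each of these errors. The key structural fact used throughout is that, from \eqref{eq:tilde3} (modulo an index shift), one has $\tz_k-z_k = \tilde\zeta_k$, and the latter is controlled in second moment by $C\eta_k^2$.

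First, I would split $V(\theta_k,z_k)$ into its two summands. For the projection-gap piece, using the triangle inequality together with non-expansiveness of $\Pi_\Theta$,
\begin{align*}
  \norm{\Pi_\Theta(\theta_k-z_k/\beta)-\theta_k}_2 \leq \norm{\Pi_\Theta(\tt_k-\tz_k/\beta)-\tt_k}_2 + 2\norm{\tt_k-\theta_k}_2 + \tfrac{1}{\beta}\norm{\tilde\zeta_k}_2.
\end{align*}
For the gradient-estimation piece, using $\tz_k-z_k=\tilde\zeta_k$ and Lipschitz continuity of $\nabla f$ (which holds by Assumption~\ref{as:noise} via the Proposition on Lipschitz gradients),
\begin{align*}
  \norm{z_k-\nabla f(\theta_k)}_2 \leq \norm{\tz_k-\nabla f(\tt_k)}_2 + \norm{\tilde\zeta_k}_2 + L_G\norm{\tt_k-\theta_k}_2.
\end{align*}
Squaring both displays via $(a+b+c)^2 \leq 2a^2+4b^2+4c^2$ (with an $L_G$-dependent Young split on the gradient side to avoid $L_G^2$), summing, and taking expectations produces an inequality of the form
\begin{align*}
  \expec{V(\theta_k,z_k)}{} \leq 2\,\expec{V(\tt_k,\tz_k)}{} + C(L_G)\,\expec{\|\tt_k-\theta_k\|_2^2}{} + C'\,\expec{\|\tilde\zeta_k\|_2^2}{},
\end{align*}
with $C(L_G)$ of the form $9+4L_G$ after careful bookkeeping.

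Next I would control the two expectations on the right. For $\tilde\zeta_k$, the explicit formula \eqref{eq:compdef} gives $\tilde\zeta_{k+1}=\eta_{k+1}P_{\theta_k}u(\theta_k,x_k)$; combining Lemma~\ref{lm:poisregular}(b)(ii) with the drift inequalities of Assumption~\ref{as:noise}(a) (which give $\sup_k\expec{\mathcal{V}^2(x_k)}{}<\infty$) yields $\expec{\|\tilde\zeta_k\|_2^2}{} \leq C\eta_k^2 \leq CN^{-2a}$, producing the $12N^{-2a}$ term. For $\|\tt_k-\theta_k\|_2^2$, I would plug $\eta_k=(N+k)^{-a}$ and $\gamma=1/a-1$ into Lemma~\ref{lm:tildeorigdiff}. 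With these choices, $\eta_j^{1+\gamma}=(N+j)^{-1}$ makes the product telescope,
\begin{align*}
  \prod_{j=i+1}^{k}(1+\eta_j^{1+\gamma}) = \prod_{j=i+1}^{k}\frac{N+j+1}{N+j} = \frac{N+k+1}{N+i+1} \leq 2
\end{align*}
for $1\le i\le k\le N$, while $\eta_i^{-\gamma}=(N+i)^{1-a}\geq 1$ gives $\eta_i^3(1+\eta_i^{-\gamma}) \leq 2(N+i)^{1-4a}$. The standalone term $\eta_k^3(1+\eta_k^{-\gamma})$ is then $\leq 2N^{1-4a}$, while each summation, using $\sum_{i=1}^{k-1}(N+i)^{1-4a} = O(N^{2-4a})$ for $a>1/2$ (and $\delta_{i-1}\leq \eta_{i-1}\leq C\eta_i$ for the second sum), contributes an $8N^{2-4a}$-scale term. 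Combining with the prefactor $C(L_G)$ gives the $(9+4L_G)(N^{1-4a}+8N^{2-4a})$ portion of the claim.

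The main obstacle is tracking the constants so that the Young-type splits produce exactly the $(9+4L_G)$ coefficient rather than a $L_G^2$-dependent one—this forces a particular choice of splitting parameter when converting $(a+b+c)^2$ into the decomposition; the other delicate point is verifying that the telescoping with $\gamma = 1/a-1$ truly collapses the product in Lemma~\ref{lm:tildeorigdiff} to an $O(1)$ factor while leaving the sum $\sum_i (N+i)^{1-4a}$ at the correct $N^{2-4a}$ scale (which requires $a>1/2$ so that $4a>1$). Once the algebra is lined up, the bound on $\expec{V(\theta_k,z_k)}{}$ follows by simple addition of the three contributions.
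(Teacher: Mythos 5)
Your proposal follows essentially the same route as the paper: it establishes $V(\theta_k,z_k)\le 2V(\tt_k,\tz_k)+C(L_G)\norm{\theta_k-\tt_k}_2^2+C'\norm{z_k-\tz_k}_2^2$ via non-expansiveness of $\Pi_\Theta$ and Lipschitzness of $\nabla f$, and then plugs in Lemma~\ref{lm:tildeorigdiff} with $\gamma=1/a-1$ and the $\tilde{\zeta}_k$ bound, exactly as the paper does (the paper controls the product $\prod_{j}\left(1+(N+j)^{-a(1+\gamma)}\right)$ by $\exp\left(N^{1-a(1+\gamma)}\right)\le e$ instead of your telescoping, which is equivalent, and it likewise leaves the precise $(9+4L_G)$ bookkeeping loose). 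One small point in your favor: your observation that bounding $\expec{\norm{z_k-\tz_k}_2^2}{}$ requires the explicit formula $\tilde{\zeta}_{k+1}=\eta_{k+1}P_{\theta_k}u(\theta_k,x_k)$ together with the drift condition (a second-moment argument) is more careful than the paper, which simply inserts $12\eta_{k+1}^2$ although Lemma~\ref{lm:noisedecompbound} only states a first-moment bound on $\tilde{\zeta}_k$.
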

\begin{lemma}\label{lm:Vykthetakcompbound}
Let Assumption~\ref{aspt:constraint}, Assumption~\ref{as:contdiff}, and Assumption~\ref{as:noise} be true. Let $\{\tt_k,\tz_k,\ty_k,\}_{k\geq 0}$ be the sequence generated by \eqref{eq:tilde0}-\eqref{eq:tilde3}. Then $\forall k\geq 0$,
\begin{align}
    \frac{\beta}{2}\sum_{k=0}^{N-1}\eta_{k+1}\norm{\ty_k-\tt_k}_2^2\leq W(x_0,z_0)+\sum_{k=0}^{N-1}r_{k+1} \quad \forall N\geq 1,\label{eq:ykthetakdiffnormbound}
\end{align}
where for $k\geq 0$, and
\begin{align*}
    r_{k+1}= \frac{(L_G+L_\phi)\eta_{k+1}^2}{2}\norm{\ty_k-\tt_k}_2^2+\frac{L_\phi}{2}\norm{\tz_{k+1}-\tz_k}_2^2+\eta_{k+1}\inprod{\tt_k-\ty_k,\tilde{\epsilon}_{k+1}}+\frac{\eta_{k+1}L_G^2}{\beta}\norm{\theta_k-\tt_k}_2^2.
\end{align*}
\end{lemma}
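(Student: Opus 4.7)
The plan is to establish a per-iterate descent inequality for the merit function $W(\theta,z)=(f(\theta)-f^*)-\phi(\theta,z)$ along the auxiliary trajectory $\{(\tt_k,\tz_k)\}$, and then telescope. Concretely, I aim to prove the single-step bound
\begin{align*}
W(\tt_{k+1},\tz_{k+1})-W(\tt_k,\tz_k)\;\le\;-\tfrac{\beta}{2}\eta_{k+1}\norm{\ty_k-\tt_k}_2^2+r_{k+1}.
\end{align*}
Summing from $k=0$ to $N-1$ and using that $W(\tt_N,\tz_N)\ge 0$ (because $f\ge f^*$, and $\phi(\tt_N,\tz_N)\le 0$ since $y=\tt_N$ is feasible in the minimization defining $\phi$), together with the initial conditions $\tt_0=\theta_0,\ \tz_0=z_0$, will then yield \eqref{eq:ykthetakdiffnormbound}.

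To derive the per-iterate inequality I would split $W_{k+1}-W_k$ as $[f(\tt_{k+1})-f(\tt_k)]-[\phi(\tt_{k+1},\tz_{k+1})-\phi(\tt_k,\tz_k)]$ and bound each term. The $L_G$-smoothness of $f$ (a consequence of Assumption~\ref{as:noise}) together with $\tt_{k+1}-\tt_k=\eta_{k+1}(\ty_k-\tt_k)$ gives an upper bound $\eta_{k+1}\inprod{\nabla f(\tt_k),\ty_k-\tt_k}+\tfrac{L_G\eta_{k+1}^2}{2}\norm{\ty_k-\tt_k}_2^2$. The joint $L_\phi$-smoothness of $\phi$ (Lemma~4 of~\cite{ghadimi2020single}) provides a lower bound on $\phi_{k+1}-\phi_k$ whose negation is controlled by the quadratics $\tfrac{L_\phi}{2}(\norm{\tt_{k+1}-\tt_k}_2^2+\norm{\tz_{k+1}-\tz_k}_2^2)$ and a linear term. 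The Danskin/envelope expressions $\nabla_\theta\phi(\tt_k,\tz_k)=-\tz_k+\beta(\tt_k-\ty_k)$ and $\nabla_z\phi(\tt_k,\tz_k)=\ty_k-\tt_k$ (valid because $\ty_k$ uniquely attains the minimum defining $\phi$) then convert the linear term into $\eta_{k+1}\inprod{\tz_k,\ty_k-\tt_k}+\eta_{k+1}\beta\norm{\ty_k-\tt_k}_2^2+\inprod{\tt_k-\ty_k,\tz_{k+1}-\tz_k}$.

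Next I would invoke the optimality of $\ty_k$ in the subproblem defining $\phi$, i.e.\ $\inprod{\tz_k,\ty_k-\tt_k}\le -\beta\norm{\ty_k-\tt_k}_2^2$. After writing $\nabla f(\tt_k)=\tz_k+(\nabla f(\tt_k)-\tz_k)$, this cancellation leaves a net coefficient of $-\eta_{k+1}\beta$ on $\norm{\ty_k-\tt_k}_2^2$ plus a cross-term $\eta_{k+1}\inprod{\nabla f(\tt_k)-\tz_k,\ty_k-\tt_k}$. To reduce it, I would plug the explicit hypothetical update $\tz_{k+1}-\tz_k=\eta_{k+1}(\nabla f(\theta_k)+\tilde{\epsilon}_{k+1}-\tz_k)$ from \eqref{eq:ztildek1update} into $\inprod{\tt_k-\ty_k,\tz_{k+1}-\tz_k}$. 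The $\tz_k$-pieces cancel algebraically and, after regrouping, the two cross-terms collapse to $\eta_{k+1}\inprod{\ty_k-\tt_k,\nabla f(\tt_k)-\nabla f(\theta_k)}+\eta_{k+1}\inprod{\tt_k-\ty_k,\tilde{\epsilon}_{k+1}}$. Bounding the first piece by Lipschitz continuity of $\nabla f$ followed by Young's inequality with weight calibrated to absorb $\tfrac{\eta_{k+1}\beta}{2}\norm{\ty_k-\tt_k}_2^2$ produces the residual $\tfrac{\eta_{k+1}L_G^2}{\beta}\norm{\theta_k-\tt_k}_2^2$ appearing in $r_{k+1}$, while the noise piece $\eta_{k+1}\inprod{\tt_k-\ty_k,\tilde{\epsilon}_{k+1}}$ is already exactly the form of the third summand in $r_{k+1}$. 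The remaining $\tfrac{(L_G+L_\phi)\eta_{k+1}^2}{2}\norm{\ty_k-\tt_k}_2^2$ and $\tfrac{L_\phi}{2}\norm{\tz_{k+1}-\tz_k}_2^2$ come directly from the two smoothness inequalities.

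The main obstacle is the cross-term $\eta_{k+1}\inprod{\nabla f(\tt_k)-\tz_k,\ty_k-\tt_k}$: in the state-dependent Markovian setting $\tz_k$ is neither unbiased for $\nabla f(\tt_k)$ nor close to it in expectation, because of both the drift $\tt_k\neq\theta_k$ and the Markov noise that the Poisson-equation decomposition only controls in a delayed sense. The key structural observation is that the hypothetical $\tz$-update couples $\tz_k$ to $\nabla f(\theta_k)$ rather than $\nabla f(\tt_k)$, so the algebraic cancellation of the cross-term yields precisely the Lipschitz-controllable gap $\nabla f(\tt_k)-\nabla f(\theta_k)$ together with the telescopable/martingale-like noise $\tilde{\epsilon}_{k+1}$; both are handled cleanly at later stages of the main theorem via Lemma~\ref{lm:tildeorigdiff} and Lemma~\ref{lm:noisedecompbound}, respectively. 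Once the one-step inequality is in place, summation and nonnegativity of $W$ conclude the proof.
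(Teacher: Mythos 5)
Your proposal matches the paper's proof essentially step for step: the same merit function $W=(f-f^*)-\phi$, the same $L_\phi$-smoothness bound on $\phi$ with the Danskin gradients, the same use of the optimality condition $\inprod{\tz_k+\beta(\ty_k-\tt_k),\ty_k-\tt_k}\le 0$, the same substitution of the hypothetical update $\tz_{k+1}-\tz_k=\eta_{k+1}(\nabla f(\theta_k)+\tilde{\epsilon}_{k+1}-\tz_k)$ to isolate the Lipschitz gap $\nabla f(\tt_k)-\nabla f(\theta_k)$ and the noise term, the same Young-inequality calibration yielding $\tfrac{\eta_{k+1}L_G^2}{\beta}\norm{\theta_k-\tt_k}_2^2$, and the same telescoping via $\phi(\tt_k,\tz_k)\le 0$ hence $W\ge 0$. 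The argument is correct and no substantive difference from the paper's route warrants further comment.
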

\begin{lemma}\label{lm:rk1bound}
Let $\{\tt_k,\tz_k,\ty_k,\}_{k\geq 0}$ be the sequence generated by \eqref{eq:tilde0}-\eqref{eq:tilde3}, and Assumption~\ref{aspt:constraint}-\ref{as:noise} hold. Then,
\begin{enumerate}
    \item If $\eta_0=1$, we have,
    \begin{align}
        \beta^2\expec{\norm{\ty_k-\tt_k}_2^2|\cF_{k-1}}{}\leq \expec{\norm{\tz_k}_2^2|\cF_{k-1}}{}\leq \sigma^2 \quad \forall k\geq 1; \label{eq:expeczkbound}
    \end{align}
    \item If $\eta_k\leq 1/\sqrt{2}$ for all $k\geq 1$, then,
    \begin{align}
        \sum_{k=0}^\infty \expec{\norm{\tz_{k+1}-\tz_k}_2^2|\cF_k}{}&\leq 2\left(\|\tz_0\|_2^2+24\sigma^2\sum_{k=0}^\infty\eta_k^2\right),\\
        \sum_{k=0}^\infty\expec{r_{k+1}|\cF_k}{}&\leq \sigma_3^2\sum_{k=0}^\infty\eta_k^2+\frac{L_G^2}{\beta}\sum_{k=0}^\infty\eta_{k+1}\expec{\norm{\theta_k-\tt_k}_2^2}{},\label{eq:rksumbound}
    \end{align}
    where
    \begin{align*}
        \sigma_3^2=\frac{1}{2}\left((3L_G+L_\phi)\frac{\sigma^2}{\beta^2}+4L_\phi(\|z_0\|_2^2+24\sigma^2)+2\right).
    \end{align*}
\end{enumerate}
\end{lemma}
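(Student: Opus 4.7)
My plan is to first establish the two bounds of part~(1), which control the sizes of $\tz_k$ and of $\|\ty_k-\tt_k\|_2$, and then leverage them together with the noise decomposition of Lemma~\ref{lm:noisedecompbound} to handle part~(2). For the first inequality of part~(1), I would write $\ty_k = \Pi_\Theta(\tt_k - \tz_k/\beta)$ via \eqref{eq:tilde4} and note that $\tt_k \in \Theta$ yields $\tt_k = \Pi_\Theta(\tt_k)$; non-expansiveness of Euclidean projection then gives $\|\ty_k - \tt_k\|_2 \le \|\tz_k\|_2/\beta$. For $\E[\|\tz_k\|_2^2 \mid \cF_{k-1}] \le \sigma^2$, I would unroll the moving-average recursion $z_{k+1} = (1-\eta_{k+1})z_k + \eta_{k+1}\nabla F(\theta_k,x_{k+1})$; the convention $\eta_0=1$ makes the coefficient on $z_0$ drop out, exhibiting $z_k$ as a bona fide convex combination of past stochastic gradients, so convexity of $\|\cdot\|_2^2$ combined with the uniform second-moment bound $\E[\|\nabla F(\theta_i,x_{i+1})\|_2^2 \mid \cF_i]\le\sigma^2$ from Assumption~\ref{as:finitenoisevar} delivers the bound on $\E[\|z_k\|_2^2\mid\cF_{k-1}]$. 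The $\tilde\zeta$-offset in \eqref{eq:tilde3} (of order $\eta_k$) is absorbed into the same estimate.

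For the first inequality of part~(2), I would combine the recursion \eqref{eq:ztildek1update} into $\tz_{k+1}-\tz_k = \eta_{k+1}\bigl(\nabla f(\theta_k)+\tilde\epsilon_{k+1}-\tz_k\bigr)$ and apply $\|\tz_{k+1}-\tz_k\|_2^2 \le 2\eta_{k+1}^2\bigl(\|\tz_k\|_2^2 + \|\nabla f(\theta_k)+\tilde\epsilon_{k+1}\|_2^2\bigr)$. The key preliminary step is to show that $\{\tz_k\}$ stays uniformly bounded on average: applying convexity of $\|\cdot\|_2^2$ to the same recursion and using $\eta_k\le 1/\sqrt{2}$ yields a Lyapunov-type inequality $\E[\|\tz_{k+1}\|_2^2] \le (1-c\eta_{k+1})\E[\|\tz_k\|_2^2] + c'\eta_{k+1}\sigma^2$, which iterates to a uniform bound in terms of $\|\tz_0\|_2^2$ and $\sigma^2$. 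Summing the squared-difference bound over $k$ and tracking the constants carefully produces the target $2(\|\tz_0\|_2^2 + 24\sigma^2\sum_k\eta_k^2)$.

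For the second inequality of part~(2), I would bound each of the four terms in the definition of $r_{k+1}$ separately. The term $\tfrac{(L_G+L_\phi)\eta_{k+1}^2}{2}\|\ty_k-\tt_k\|_2^2$ is bounded using part~(1) by $\tfrac{(L_G+L_\phi)\eta_{k+1}^2\sigma^2}{2\beta^2}$; the term $\tfrac{L_\phi}{2}\|\tz_{k+1}-\tz_k\|_2^2$ summed over $k$ is immediately controlled by the first bound of part~(2); and the last term $\tfrac{\eta_{k+1}L_G^2}{\beta}\|\theta_k-\tt_k\|_2^2$ is carried verbatim onto the right-hand side. The main technical hurdle is the inner-product term $\eta_{k+1}\langle \tt_k-\ty_k,\tilde\epsilon_{k+1}\rangle$, since in the Markovian setting $\tilde\epsilon_{k+1}$ is \emph{not} conditionally mean-zero. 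To handle it I would split $\tilde\epsilon_{k+1}=e_{k+1}+\nu_{k+1}+\tilde\zeta_{k+1}$ via Lemma~\ref{lm:noisedecompbound}: because $\tt_k-\ty_k$ is $\cF_k$-measurable, the martingale piece satisfies $\E[\langle \tt_k-\ty_k,e_{k+1}\rangle\mid\cF_k]=0$, while the remaining pieces are handled by Cauchy--Schwarz together with $\E[\|\nu_{k+1}\|_2]\le c\eta_{k+1}$, $\E[\|\tilde\zeta_{k+1}\|_2]\le c\eta_{k+1}$ and the diameter bound $\|\tt_k-\ty_k\|_2\le D_\Theta$, contributing order $\eta_{k+1}^2$. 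Packaging the constants then yields the stated $\sigma_3^2$. The essential obstacle throughout is exactly this Markovian cross term: absent the Poisson-equation decomposition, the inner product with $\tilde\epsilon_{k+1}$ could not be made summable, and it is for this reason that Lemma~\ref{lm:noisedecompbound} is indispensable here.
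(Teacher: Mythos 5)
Your proposal is correct and follows essentially the same route as the paper, which simply invokes \cite[Proposition 1]{ghadimi2020single} for the moving-average estimates in parts (1) and (2) and spells out only the one genuinely new step: the cross term $\eta_{k+1}\inprod{\tt_k-\ty_k,\tilde{\epsilon}_{k+1}}$ is no longer conditionally mean-zero, but after the Poisson-equation decomposition the martingale piece $e_{k+1}$ drops out in conditional expectation and the residual $\nu_{k+1}+\tilde{\zeta}_{k+1}$ contributes only $\mathcal{O}(\eta_{k+1})$ per step. The only cosmetic difference is that you bound $\norm{\tt_k-\ty_k}_2$ by $D_\Theta$ where the paper uses $\norm{\tz_k}_2/\beta$ via \eqref{eq:expeczkbound}; both suffice.
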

The proofs of Lemma~\ref{th:mainthm}, Lemma~\ref{lm:tildeorigdiff}, Lemma~\ref{lm:Vykthetakcompbound}, and Lemma~\ref{lm:rk1bound} are provided in Section~\ref{sec:proofoflemmaa3toa6}. We now prove Theorem~\ref{th:mainthm}.
\begin{proof}[Proof of Theorem~\ref{th:mainthm}]\label{pf:mainthm}
Define,
\begin{align*}
    \Gamma_1\coloneqq
    1 \qquad \Gamma_k\coloneqq \prod_{i=0}^{k-1}(1-\eta_{i+1}) \quad \forall k\geq 2.\numberthis\label{eq:gammadef}
\end{align*}
Using the update of Algorithm~\ref{alg:asa} we have
\begin{align*}
    \nabla f(\tt_{k+1})-\tz_{k+1}=&~(1-\eta_{k+1})\left(\nabla f(\tt_{k})-\tz_k+\nabla f(\tt_{k+1})-\nabla f(\tt_{k})\right)\\
    &~+\eta_{k+1}\left(\nabla f(\tt_{k+1})-\nabla f(\tt_{k})-\tilde{\epsilon}_{k+1}\right)+\eta_{k+1}\left(\nabla f(\tt_{k})-\nabla f(\theta_{k})\right).
\end{align*}
Dividing both sides of the above equation by $\Gamma_{k+1}$ we obtain
\begin{align*}
    \frac{\nabla f(\tt_{k+1})-\tz_{k+1}}{\Gamma_{k+1}}=& \frac{1}{\Gamma_k}\left(\nabla f(\tt_{k})-\tz_k+\nabla f(\tt_{k+1})-\nabla f(\tt_{k})\right)+\frac{\eta_{k+1}}{\Gamma_{k+1}}\left(\nabla f(\tt_{k+1})-\nabla f(\tt_{k})-\tilde{\epsilon}_{k+1}\right)\\
    &~~~~+\frac{\eta_{k+1}}{\Gamma_{k+1}}\left(\nabla f(\tt_{k})-\nabla f(\theta_{k})\right)\\
    =&\frac{1}{\Gamma_k}\left(\nabla f(\tt_{k})-\tz_k\right)+\frac{1}{\Gamma_{k+1}}\left(\nabla f(\tt_{k+1})-\nabla f(\tt_{k})\right)-\frac{\eta_{k+1}}{\Gamma_{k+1}}\left(\tilde{\epsilon}_{k+1}+\nabla f(\theta_{k})-\nabla f(\tt_{k})\right)
\end{align*}
Summing both sides from $k=1$ to $k=i-1$ we obtain
\begin{align*}
    \nabla f(\tt_{i})-\tz_{i}
    =\sum_{k=0}^{i-1}\frac{\Gamma_{i}}{\Gamma_{k+1}}\left(\nabla f(\tt_{k+1})-\nabla f(\tt_{k})\right)-\sum_{k=0}^{i-1}\frac{\eta_{k+1}\Gamma_{i}}{\Gamma_{k+1}}\left(\tilde{\epsilon}_{k+1}+\nabla f(\theta_{k})-\nabla f(\tt_{k})\right).
\end{align*}
Hence,
\begin{align*}
    \nabla f(\tt_{i})-\tz_{i}=&\frac{\Gamma_i}{\Gamma_{i-1}}\left(\nabla f(\tt_{i-1})-\tz_{i-1}\right)+\left(\nabla f(\tt_i)-\nabla f(\tt_{i-1})\right)-\eta_i\left(\tilde{\epsilon}_{i}+\nabla f(\theta_{i-1})-\nabla f(\tt_{i-1})\right)\\
    =&(1-\eta_{i})\left(\nabla f(\tt_{i-1})-\tz_{i-1}\right)+\frac{\eta_i}{\eta_i}\left(\nabla f(\tt_i)-\nabla f(\tt_{i-1})\right)-\eta_i\left(\tilde{\epsilon}_{i}+\nabla f(\theta_{i-1})-\nabla f(\tt_{i-1})\right).
\end{align*}
Using Young's inequality and Jensen's inequality, 
\begin{align*}
   \norm{\nabla f(\tt_{i})-\tz_{i}}_2^2\leq &\frac{1-\eta_i/4}{1-\eta_i/2}\norm{(1-\eta_{i})\left(\nabla f(\tt_{i-1})-\tz_{i-1}\right)+\frac{\eta_i}{\eta_i}\left(\nabla f(\tt_i)-\nabla f(\tt_{i-1})\right)-\eta_i\tilde{\epsilon}_{i}}_2^2\\
   &\qquad+\frac{4-\eta_i}{\eta_i}\eta_i^2\norm{\nabla f(\theta_{i-1})-\nabla f(\tt_{i-1})}_2^2\\
    \leq &\frac{1-\eta_i/4}{1-\eta_i/2} \cdot I_1 +4L_G^2\eta_i\norm{\theta_{i-1}-\tt_{i-1}}_2^2,\numberthis\label{eq:gradfzdiffnormintermed}
\end{align*}
where 
\begin{align*}
    I_1\coloneqq&(1-\eta_{i}) \norm{\nabla f(\tt_{i-1})-\tz_{i-1}}_2^2+\frac{\norm{\nabla f(\tt_i)-\nabla f(\tt_{i-1})}_2
    ^2}{\eta_i}+ \eta_i^2\norm{\tilde{\epsilon}_i}_2^2\\
    &-2\eta_i\left\langle (1-\eta_{i})\left(\nabla f(\tt_{i-1})-\tz_{i-1}\right)+\left(\nabla f(\tt_i)-\nabla f(\tt_{i-1})\right),\tilde{\epsilon}_i\right\rangle.
\end{align*}
Taking conditional expectation of $I_1$ with respect to $\cF_{i-1}$, using \eqref{eq:lipgrad}, Assumption~\ref{as:noise}, and \eqref{eq:tilde2}, we then obtain
\begin{align*}
    \expec{I_1 \mid\cF_{i-1}}{}\leq & (1-\eta_i)\norm{\nabla f(\tt_{i-1})-\tz_{i-1}}_2^2+\eta_i L_G^2\norm{\ty_{i-1}-\tt_{i-1}}_2
    ^2+ \eta_i^2\sigma^2\\
    &-2\eta_i\expec{\left\langle (1-\eta_{i})\left(\nabla f(\tt_{i-1})-\tz_{i-1}\right)+\left(\nabla f(\tt_i)-\nabla f(\tt_{i-1})\right),\tilde{\zeta}_i\right\rangle |\cF_{i-1} }{}\\
    \leq & (1-\eta_i)\norm{\nabla f(\tt_{i-1})-\tz_{i-1}}_2^2+\eta_i L_G^2\expec{\norm{\ty_{i-1}-\tt_{i-1}}_2
    ^2|\cF_{i-1}}{}+ \eta_i^2\sigma^2\\
    &+2\eta_i^2(1-\eta_i)^2\norm{\nabla f(\tt_{i-1})-\tz_{i-1}}_2^2+2\eta_i^4L_G^2\norm{\ty_{i-1}-\tt_{i-1}}_2^2+\eta_i^2\\
    \leq & \left(1-\frac{\eta_i}{2}\right)\norm{\nabla f(\tt_{i-1})-\tz_{i-1}}_2^2+2\eta_i L_G^2\expec{\norm{\ty_{i-1}-\tt_{i-1}}_2
    ^2|\cF_{i-1}}{}+ \eta_i^2(1+\sigma^2).
\end{align*}
Now, taking expectation on both sides of \eqref{eq:gradfzdiffnormintermed},
\begin{align*}
    \expec{\norm{\nabla f(\tt_{i})-\tz_{i}}_2^2 }{}&\leq  \left(1-\frac{\eta_i}{4}\right)\expec{\norm{\nabla f(\tt_{i-1})-\tz_{i-1}}_2^2}{}+4\eta_i L_G^2\expec{\norm{\ty_{i-1}-\tt_{i-1}}_2
    ^2}{}\\
    &~~+ 2\eta_i^2(1+\sigma^2)+4L_G^2\eta_i\expec{\norm{\theta_{i-1}-\tt_{i-1}}_2^2}{}\\
    &\leq  Y_0^{i}\expec{\norm{\nabla f(\tt_{0})-\tz_{0}}_2^2}{}+4 L_G^2\sum_{k=1}^{i}Y_k^i\eta_k\expec{\norm{\ty_{k-1}-\tt_{k-1}}_2
    ^2}{}
    +2\sum_{k=1}^i Y_{k-1}^i\eta_k^2(1+\sigma^2)\\
    &~~+4L_G^2\sum_{k=1}^iY_{k-1}^i\eta_k\expec{\norm{\theta_{k-1}-\tt_{k-1}}_2^2}{},
\end{align*}
where 
\begin{align*}
    Y_i^i=1~~ \text{for all $i$, \qquad and} \qquad Y_k^i=\prod_{j=k+1}^i\left(1-\frac{\eta_j}{4}\right) \ \text{ for } i> k.\numberthis\label{eq:Ykidef}
\end{align*}
Then,
\begin{align*}
    \sum_{i=1}^{N}\eta_i\expec{\norm{\nabla f(\tt_{i})-\tz_{i}}_2^2 }{}\leq& \sum_{i=1}^{N}\eta_iY_0^i\expec{\norm{\nabla f(\tt_{0})-\tz_{0}}_2^2}{}+4 L_G^2\sum_{i=1}^{N}\sum_{k=1}^{i}Y_k^i\eta_i\eta_k\expec{\norm{\ty_{k-1}-\tt_{k-1}}_2
    ^2}{}\\
    &+2\sum_{i=1}^{N}\sum_{k=1}^i Y_{k-1}^i\eta_i\eta_k^2(1+\sigma^2)+4L_G^2\sum_{i=1}^{N}\sum_{k=1}^iY_{k-1}^i\eta_i\eta_k\expec{\norm{\theta_{k-1}-\tt_{k-1}}_2^2}{}\\
    =& \expec{\norm{\nabla f(\tt_{0})-\tz_{0}}_2^2}{}+4 L_G^2\sum_{k=1}^{N}\sum_{i=k}^{N}Y_k^i\eta_i\eta_k\expec{\norm{\ty_{k-1}-\tt_{k-1}}_2
    ^2}{}\\
    &+2\sum_{k=1}^{N}\sum_{i=k}^{N} Y_{k-1}^i\eta_i\eta_k^2(1+\sigma^2)+4L_G^2\sum_{k=1}^{N}\sum_{i=k}^NY_{k-1}^i\eta_i\eta_k\expec{\norm{\theta_{k-1}-\tt_{k-1}}_2^2}{}\\
    \leq & \expec{\norm{\nabla f(\tt_{0})-\tz_{0}}_2^2}{}+4 L_G^2\sum_{k=0}^{N-1}\eta_k\expec{\norm{\ty_{k}-\tt_{k}}_2
    ^2}{}\\
    &+2\sum_{k=1}^{N}\eta_k^2(1+\sigma^2)+4L_G^2\sum_{k=0}^{N-1}\eta_{k+1}\expec{\norm{\theta_{k}-\tt_{k}}_2^2}{}.
\end{align*}
The last inequality follows by Lemma~\ref{lm:Ykibound}.
Combining \eqref{eq:ykthetakdiffnormbound}, and \eqref{eq:rksumbound}, we get, 
 \begin{align*}
    \sum_{i=1}^{N}\eta_i\expec{\norm{\nabla f(\tt_{i})-\tz_{i}}_2^2 }{}\leq& ~~\expec{\norm{\nabla f(\tt_{0})-\tz_{0}}_2^2}{}+\sum_{k=1}^{N}\eta_k^2(1+\sigma^2)+4L_G^2\sum_{k=0}^{N-1}\eta_{k+1}\expec{\norm{\theta_{k}-\tt_{k}}_2^2}{}\\
    &+4 L_G^2\left(W(x_0,z_0)+\sigma^2\sum_{k=0}^N\eta_k^2+\frac{L_G^2}{\beta}\sum_{k=0}^\infty\eta_{k+1}\expec{\norm{\theta_k-\tt_k}_2^2}{}\right).\numberthis\label{eq:gradfzwtdiffnormbound}
\end{align*}
Combining \eqref{eq:gradfzwtdiffnormbound}, and \eqref{eq:ykthetakdiffnormbound}, we get,
\begin{align*}
    \left(\sum_{k=1}^N\eta_k \right) \expec{V(\tt_k,\tz_k)}{}    \leq& \expec{\norm{\nabla f(\tt_{0})-\tz_{0}}_2^2}{}+4L_G^2W(x_0,z_0)+2 \sum_{k=1}^{N}\eta_k^2(1+\sigma^2)\\
    &~+(4L_G^2+4L_G^4/\beta)\sum_{k=0}^{N-1}\eta_{k+1}\expec{\norm{\theta_{k}-\tt_{k}}_2^2}{}.\numberthis\label{eq:Vtttzintermedbound}
\end{align*}
Choosing $\eta_k=(N+k)^{-a}$, using Lemma~\ref{lm:tildeorigdiff}, for $\gamma=1/a-1$, we get,
\begin{align*}
    \sum_{k=0}^{N-1}\eta_{k+1}\expec{\norm{\theta_{k}-\tt_{k}}_2^2}{}\leq& \sum_{k=0}^{N-1}\eta_{k+1}\eta_{k}^3(1+\eta_{k}^{-\gamma})+\sum_{k=0}^{N-1}\eta_{k+1}\sum_{i=1}^{k-1}\eta_{i}^3\left(1+\eta_i^{-\gamma}\right)\prod_{j=i+1}^{k}\left(1+\eta_{j}^{1+\gamma}\right)\\
    \leq& 2\sum_{k=0}^{N-1}(N+k+1)^{-a}\sum_{i=1}^{k-1}(N+i)^{-3a}\left(1+(N+i)^{a\gamma}\right)\prod_{j=i+1}^{k}\left(1+(N+j)^{-a(1+\gamma)}\right)\\
    \leq & 2\sum_{k=0}^{N-1}N^{-4a}\sum_{i=1}^{k-1}\left(1+(2N)^{1-a}\right)\left(1+N^{-1}\right)^{k-i}\\
    \leq & 2\sum_{k=0}^{N-1}N^{1-4a}\left(1+(2N)^{1-a}\right)\left((1+N^{-1})^k-1\right)\\
    \leq &8N^{2-5a}\left(N(1+N^{-1})^N-2N\right)\\
    \leq &8N^{3-5a}. 
\end{align*}
Hence, we have
\begin{align*}
    &\frac{\sum_{k=0}^{N-1}\eta_{k+1}\expec{\norm{\theta_{k}-\tt_{k}}_2^2}{}}{\sum_{k=1}^N\eta_k}\leq \frac{8N^{3-5a}}{\sum_{k=1}^N(2N)^{-a}}\leq 16N^{2-4a}.\numberthis\label{eq:thetattdiffwtnorm2}
\end{align*}
Using \eqref{eq:thetattdiffwtnorm2}, and \eqref{eq:Vtttzintermedbound}, we get
\begin{align*}
    \expec{V(\tt_k,\tz_k)}{}\leq \left(\expec{\norm{\nabla f(\tt_{0})-\tz_{0}}_2^2}{}+4 L_G^2W(x_0,z_0)\right)N^{a-1}+2(1+\sigma^2)N^{-a}+16N^{2-4a}.
\end{align*}
Then using Lemma~\ref{lm:vrelationorigtilde}, we get,
\begin{align*}
    \expec{V(\theta_k,z_k)}{}\leq &\left(\expec{\norm{\nabla f(\tt_{0})-\tz_{0}}_2^2}{}+4L_G^2W(x_0,z_0)\right)N^{a-1}\\
    &+2(1+\sigma^2)N^{-a}+ (9+4L_G)\left(N^{1-4a}+8N^{2-4a}\right)+12N^{-2a}+16N^{2-4a}.\numberthis\label{eq:vtttzbound}
\end{align*}
Now choosing, $a=3/5$, we get,
\begin{align*}
    \expec{V(\theta_k,z_k)}{}\leq &\left(\expec{\norm{\nabla f(\tt_{0})-\tz_{0}}_2^2}{}+4L_G^2W(x_0,z_0)\right)N^{-2/5}\\
    &+2(1+\sigma^2)N^{-3/5}+ (9+4L_G)\left(N^{-7/5}+8N^{-2/5}\right)+12N^{-6/5}+16N^{-2/5}\\
    =& \mathcal{O}\left(N^{-\frac{2}{5}}\right).
\end{align*}
\end{proof}

\noindent Now we the provide the proofs of the Lemmas required to prove Theorem~\ref{th:mainthm}.

\subsection{Proof of Lemmas related to Theorem~\ref{th:mainthm}}\label{sec:proofoflemmaa3toa6}
\begin{proof}[Proof of Lemma~\ref{lm:tildeorigdiff}]
\iffalse
By contraction property of the projection operator, we get
\begin{align*}
    \norm{\tt_{k+1}-\theta_{k+1}}_2\leq& (1-\eta_{k+1})\norm{\tt_{k}-\theta_{k}}_2+\eta_{k+1}\norm{\ty_k-y_k}_2\\
    \leq& (1-\eta_{k+1})\norm{\tt_{k}-\theta_{k}}_2+\eta_{k+1}\delta_k+\eta_{k+1}\norm{\ty_k-y_k'}_2\\
    \leq & (1-\eta_{k+1})\norm{\tt_{k}-\theta_{k}}_2+\eta_{k+1}\norm{\tt_k-\theta_k-\tz_k/\beta+z_k/\beta}_2+\eta_{k+1}\delta_k\\
    \leq &\norm{\tt_{k}-\theta_{k}}_2+\eta_{k+1}\norm{\tz_k/\beta-z_k/\beta}_2+\eta_{k+1}\delta_k.
\end{align*}
Now taking expectation on both sides,
\begin{align*}
    \expec{\norm{\tt_{k+1}-\theta_{k+1}}_2}{}\leq&\expec{\norm{\tt_{k}-\theta_{k}}_2}{}+\frac{\eta_{k+1}^2}{\beta}+\eta_{k+1}\expec{\delta_k}{}\\
    \leq&\sum_{i=1}^{k+1}\left(\frac{\eta_{i}^2}{\beta}+\eta_{i}\expec{\delta_{i-1}}{}\right).
\end{align*}
\fi
By Jensen's inequality, contraction property of the projection operator, and Young's inequality, we get
\begin{align*}
    \norm{\tt_{k+1}-\theta_{k+1}}_2^2
    \leq& (1-\eta_{k+1})\norm{\tt_{k}-\theta_{k}}_2^2+\eta_{k+1}\norm{\ty_k-y_k}_2^2\\
    \leq & (1-\eta_{k+1})\norm{\tt_{k}-\theta_{k}}_2^2+\eta_{k+1}\left(\norm{\tt_k-\theta_k}_2+\norm{\tz_k/\beta-z_k/\beta}_2+\delta_k\right)^2\\
    \leq &(1-\eta_{k+1})\norm{\tt_{k}-\theta_{k}}_2^2+\eta_{k+1}(1+\eta_{k+1}^\gamma)\norm{\tt_k-\theta_k}_2^2\\
    &~~+2\eta_{k+1}(1+\eta_{k+1}^{-\gamma})\norm{\tz_k/\beta-z_k/\beta}_2^2+2\eta_{k+1}(1+\eta_{k+1}^{-\gamma})\delta_k^2.
\end{align*}
Now taking expectation on both sides, and using Lemma~\ref{lm:noisedecompbound}, we have
\begin{align*}
   \expec{ \norm{\tt_{k+1}-\theta_{k+1}}_2^2}{}
   \leq&  \left(1+\eta_{k+1}^{1+\gamma}\right)\expec{\norm{\tt_{k}-\theta_{k}}_2^2}{}+2\eta_{k+1}^3(1+\eta_{k+1}^{-\gamma})+2\eta_{k+1}(1+\eta_{k+1}^{-\gamma})\expec{\delta_k^2}{}\\
   \leq & \eta_{k+1}^3(1+\eta_{k+1}^{-\gamma})+2\sum_{i=1}^{k}\eta_{i}^3\left(1+\eta_i^{-\gamma}\right)\prod_{j=i+1}^{k+1}\left(1+\eta_{j}^{1+\gamma}\right)\\
   &~~+2\sum_{i=1}^{k}\eta_{i}\expec{\delta_{i-1}^2}{}\left(1+\eta_i^{-\gamma}\right)\prod_{j=i+1}^{k+1}\left(1+\eta_{j}^{1+\gamma}\right).
\end{align*}
\end{proof}
\begin{proof}[Proof of Lemma~\ref{lm:vrelationorigtilde}]
Using \eqref{eq:lipgrad}, and contraction property of the projection operator,
\begin{align*}
    V(\theta_k,z_k)=&\norm{\Pi_{\Theta}\left(\theta_k-z_k\right)-\theta_k}_2^2+\norm{z_k-\nabla f(\theta_k)}_2^2\\
    \leq &2\norm{\Pi_{\Theta}\left(\theta_k-z_k\right)-\theta_k-\Pi_{\Theta}\left(\tt_k-\tz_k\right)+\tt_k}_2^2+2\norm{\tz_k-\nabla f(\tt_k)-z_k+\nabla f(\theta_k)}_2^2\\
    &+2\norm{\Pi_{\Theta}\left(\tt_k-\tz_k\right)-\tt_k}_2^2+2\norm{\tz_k-\nabla f(\tt_k)}_2^2\\
    \leq& 2V(\tt_k,\tz_k)+(8+4L_G)\norm{\theta_k-\tt_k}_2^2+12\norm{z_k-\tz_k}_2^2.
\end{align*}
Using Lemma~\ref{lm:tildeorigdiff}, and Lemma~\ref{lm:noisedecompbound}, we get,
\begin{align*}
    \expec{V(\theta_k,z_k)}{}\leq& 2\expec{V(\tt_k,\tz_k)}{}+(8+4L_G)\expec{\norm{\theta_k-\tt_k}_2^2}{}+12\expec{\norm{z_k-\tz_k}_2^2}{}\\
    \leq & 2\expec{V(\tt_k,\tz_k)}{}+(8+4L_G)\left(\eta_{k}^3(1+\eta_{k}^{-\gamma})+2\sum_{i=1}^{k-1}\eta_{i}^3\left(1+\eta_i^{-\gamma}\right)\prod_{j=i+1}^{k}\left(1+\eta_{j}^{1+\gamma}\right)\right.\\
    &\left.+2\sum_{i=1}^{k-1}\eta_{i}\expec{\delta_{i-1}^2}{}\left(1+\eta_i^{-\gamma}\right)\prod_{j=i+1}^{k}\left(1+\eta_{j}^{1+\gamma}\right)\right)+12\eta_{k+1}^2.
\end{align*}
For $\delta_{k-1}\leq \eta_k$, choosing $\eta_k=\frac{1}{(N+k)^a}$ with $a>1/2$, we get,
\begin{align*}
    \expec{V(\theta_k,z_k)}{}\leq & 2\expec{V(\tt_k,\tz_k)}{}+\frac{12}{(N+k+1)^{2a}}\\
    &~+(8+4L_G)\left(\frac{1+(N+k)^{a\gamma}}{(N+k)^{3a}}+4\sum_{i=1}^{k-1}\frac{1+(N+i)^{a\gamma}}{(N+i)^{3a}}\prod_{j=i+1}^{k}\left(1+(N+j)^{-a(1+\gamma)}\right)\right)\\
    \leq & 2\expec{V(\tt_k,\tz_k)}{}+(9+4L_G)\left(\frac{1}{N^{3a-a\gamma}}+\sum_{i=1}^{k-1}\frac{4}{N^{3a-a\gamma}}\left(1+\frac{1}{N^{a(1+\gamma)}}\right)^i\right)+\frac{12}{N^{2a}}\\
    \leq & 2\expec{V(\tt_k,\tz_k)}{}+(9+4L_G)\left(\frac{1}{N^{3a-a\gamma}}+\frac{4}{N^{2a-2a\gamma}}\left[\left(1+\frac{1}{N^{a(1+\gamma)}}\right)^N-1\right]\right)+\frac{12}{N^{2a}}\\
    \leq & 2\expec{V(\tt_k,\tz_k)}{}+(9+4L_G)\left(\frac{1}{N^{3a-a\gamma}}+\frac{4}{N^{2a-2a\gamma}}\left[\exp\left(N^{1-a(1+\gamma)}\right)-1\right]\right)+\frac{12}{N^{2a}}\\
    \leq & 2\expec{V(\tt_k,\tz_k)}{}+(9+4L_G)\left(\frac{1}{N^{3a-a\gamma}}+8N^{1-3a+a\gamma}\right)+\frac{12}{N^{2a}}.\numberthis\label{eq:thetattdiffnorm2bound}
\end{align*}
Setting $\gamma=1/a-1$, we get,
\begin{align*}
    &\expec{V(\theta_k,z_k)}{}
    \leq  2\expec{V(\tt_k,\tz_k)}{}+(9+4L_G)\left(N^{1-4a}+8N^{2-4a}\right)+12N^{-2a}.
\end{align*}
\end{proof}

\begin{proof}[Proof of Lemma~\ref{lm:Vykthetakcompbound}]\label{pf:Vykthetakcompbound}

Recall that,
\begin{align*}
    \phi(\theta,z)=\min_{y\in\Theta}\left(\left\langle z,y-\theta\right\rangle+\frac{\beta}{2}\|y-\theta\|_2^2\right).\numberthis\label{eq:phidef}
\end{align*}
It is easy to verify that $\phi(\theta,z)$ has a $L_\phi$-Lipschitz continuous gradient \cite[Lemma 3]{ghadimi2020single} where $$L_\phi=2\sqrt{(1+\beta)^2+(1+1/(2\beta))^2}.$$
Using the definition of $\phi(\theta,z)$ in \eqref{eq:phidef}, and Lipschitz continuity of its gradient, we have 
\begin{align*}
    \phi(\tt_k,\tz_k)-\phi(\tt_{k+1},\tz_{k+1})
    \leq& \inprod{\tz_k+\beta(\ty_k-\tt_k),\tt_{k+1}-\tt_k}-\inprod{\ty_k-\tt_k,\tz_{k+1}-\tz_k}\\&~+\frac{L_\phi}{2}\left[\norm{\tt_{k+1}-\tt_k}_2^2+\norm{\tz_{k+1}-\tz_k}_2^2\right].\numberthis\label{eq:philipgrad}
\end{align*}
By the optimality condition of the subproblem \eqref{eq:tilde1} we have,
\begin{align*}
    \inprod{\tz_k+\beta(\ty_k-\tt_k),y-\ty_k}\geq 0 \quad \forall y\in\Theta.\numberthis\label{eq:subopt1}
\end{align*}
For $y=\tt_k$ we have,
\begin{align*}
    \inprod{\tz_k+\beta(\ty_k-\tt_k),\ty_k-\tt_k}\leq 0.\numberthis\label{eq:subopt2}
\end{align*}
Note that this also implies 
\begin{align*}
    \phi(\tt_k,\tz_k)\leq 0.\numberthis\label{eq:phipos}
\end{align*}
We also have,
\begin{align*}
    \tz_{k+1}-\tz_k=& \tz_{k+1}-(1-\eta_{k+1})\tz_k-\eta_{k+1}\tz_k\\
    =& z_{k+1}-(1-\eta_{k+1})z_k-\eta_{k+1}\tz_k+\tilde{\zeta}_{k+2}-(1-\eta_{k+1})\tilde{\zeta}_{k+1}\\
    =& \eta_{k+1}\left(\nabla f(\theta_k)+e_{k+1}+\nu_{k+1}+\zeta_{k+1}\right)-\eta_{k+1}\tz_k+\tilde{\zeta}_{k+2}-(1-\eta_{k+1})\tilde{\zeta}_{k+1}\\
    =& \eta_{k+1}\left(\nabla f(\theta_k)+e_{k+1}+\nu_{k+1}\right)+(\tilde{\zeta}_{k+1}-\tilde{\zeta}_{k+2})-\eta_{k+1}\tz_k+\tilde{\zeta}_{k+2}-(1-\eta_{k+1})\tilde{\zeta}_{k+1}\\
    =& \eta_{k+1}\left(\nabla f(\theta_k)+\tilde{\epsilon}_{k+1}\right)-\eta_{k+1}\tz_k,
\end{align*}
where,
$\tilde{\epsilon}_{k}=e_{k}+\nu_k+\tilde{\zeta}_k$.
Then, using \eqref{eq:lipgrad} we have,
\begin{align*}
    \inprod{\ty_k-\tt_k,\tz_{k+1}-\tz_k}=& \inprod{\ty_k-\tt_k,\eta_{k+1}\left(\nabla f(\theta_k)+\tilde{\epsilon}_{k+1}\right)-\eta_{k+1}\tz_k}\\
    =& \inprod{\tt_{k+1}-\tt_k,\nabla f(\tt_k)}+\inprod{\tt_{k+1}-\tt_k,\nabla f(\theta_k)-\nabla f(\tt_k)}\\
    &~+\inprod{\ty_k-\tt_k,\eta_{k+1}\tilde{\epsilon}_{k+1}}-\inprod{\tt_{k+1}-\tt_k,\tz_k}\\
    \geq & f(\tt_{k+1})-f(\tt_k)-\frac{L_G}{2}\|\tt_{k+1}-\tt_k\|_2^2-\frac{\beta}{2\eta_{k+1}}\norm{\tt_{k+1}-\tt_k}_2^2\\
    &~+\inprod{\ty_k-\tt_k,\eta_{k+1}\tilde{\epsilon}_{k+1}}-\inprod{\tt_{k+1}-\tt_k,\tz_k}.\numberthis\label{eq:ythetazk1zkinprod}
\end{align*}
Combining \eqref{eq:philipgrad}, \eqref{eq:subopt1}, \eqref{eq:subopt2}, and \eqref{eq:ythetazk1zkinprod}, using \eqref{eq:lipgrad}, and rearranging, we get,
\begin{align*}
    &\phi(\tt_k,\tz_k)-\phi(\tt_{k+1},\tz_{k+1})\\
    \leq & f(\tt_k)-f(\tt_{k+1})+\frac{L_G}{2}\|\tt_{k+1}-\tt_k\|_2^2+\frac{\beta}{2\eta_{k+1}}\norm{\tt_{k+1}-\tt_k}_2^2+\frac{\eta_{k+1}}{\beta}\norm{\nabla f(\theta_k)-\nabla f(\tt_k)}_2^2\\
    &-\inprod{\ty_k-\tt_k,\eta_{k+1}\tilde{\epsilon}_{k+1}}-\eta_{k+1}\beta\norm{\ty_k-\tt_k}_2^2
    +\frac{L_\phi}{2}\left[\norm{\tt_{k+1}-\tt_k}_2^2+\norm{\tz_{k+1}-\tz_k}_2^2\right]\\
    &W(\tt_{k+1},\tz_{k+1})-W(\tt_{k},\tz_{k})\\
    \leq&  -\frac{\eta_{k+1}\beta}{2}\norm{\ty_k-\tt_k}_2^2+\frac{(L_G+L_\phi)\eta_{k+1}^2}{2}\norm{\ty_k-\tt_k}_2^2+\frac{L_\phi}{2}\norm{\tz_{k+1}-\tz_k}_2^2+\frac{\eta_{k+1}L_G^2}{\beta}\norm{\theta_k-\tt_k}_2^2\\
    &-\eta_{k+1}\inprod{\ty_k-\tt_k,\tilde{\epsilon}_{k+1}}
\end{align*}
Summing both sides from $k=0$ to $N-1$, and using \eqref{eq:phipos}, we get,
\begin{align*}
    \sum_{k=0}^{i}\frac{\eta_{k+1}\beta}{2}\norm{\ty_k-\tt_k}_2^2\leq W(\tt_0,\tz_0)+\sum_{k=0}^{N-1}r_{k+1},
\end{align*}
where 
\begin{align*}
    r_{k+1}=\frac{(L_G+L_\phi)\eta_{k+1}^2}{2}\norm{\ty_k-\tt_k}_2^2+\frac{L_\phi}{2}\norm{\tz_{k+1}-\tz_k}_2^2
    +\eta_{k+1}\inprod{\tt_k-\ty_k,\tilde{\epsilon}_{k+1}}+\frac{\eta_{k+1}L_G^2}{\beta}\norm{\theta_k-\tt_k}_2^2.
\end{align*}
\end{proof}

\begin{proof}[Proof of Lemma~\ref{lm:rk1bound}]\label{pf:rk1bound}
The proof is similar to that of \cite[Proposition 1]{ghadimi2020single} with minor modifications. First, note that we no longer have $\expec{(\tt_k-\ty_k)^\top\tilde\epsilon_{k}|\cF_{k-1}}{}=0$ since $\{\tilde\epsilon_{k}\}_k$ is no longer a martingale difference sequence. But we can show that the term is small enough, i.e., of the order of the stepsize. Indeed note that, using \eqref{eq:expeczkbound}, we have
\begin{align*}
    \expec{(\tt_k-\ty_k)^\top\tilde\epsilon_{k}|\cF_{k-1}}{}=&\expec{(\tt_k-\ty_k)^\top(\nu_k+\tilde\zeta_{k})|\cF_{k-1}}{}\\
    \leq& \sqrt{\expec{\|\tt_k-\ty_k\|_2^2|\cF_{k-1}}{}}\sqrt{\expec{\|\nu_k+\tilde\zeta_{k}\|_2^2|\cF_{k-1}}{}}\\
    \leq &  \sqrt{\expec{\frac{2\|\tz_k\|_2^2}{\beta^2}|\cF_{k-1}}{}}\eta_k\\
    \leq &\frac{2\sigma\eta_k}{\beta}.\numberthis\label{eq:interactionbound}
\end{align*}
Combining \eqref{eq:interactionbound} with the proof of \cite[Proposition 1]{ghadimi2020single} we obtain the result in Lemma~\ref{lm:rk1bound}.
\end{proof}

\begin{lemma}\label{lm:Ykibound}
Let $Y_k^i$ be defined as in \eqref{eq:Ykidef} and  %i.e., 
%\begin{align*}
%    Y_i^i=\mathbf{1} \qquad Y_i^k=\prod_{j=i+1}^k\left(1-\frac{\eta_j}{4}\right).
%\end{align*}
let $\eta_j$ be of the form $(N+j)^{-a}$ where $1/2<a<1$. Then, for $k\leq i\leq N$, we have
\begin{align*}
   & Y_{k-1}^i\leq \exp\left(-\frac{1}{8}\left((N+i)^{1-a}-(N+k)^{1-a}\right)\right)\\
   &\sum_{i=k}^NY^i_{k-1}\eta_i=\mathcal{O}(1).\numberthis\label{eq:Ykibound}
\end{align*}
\end{lemma}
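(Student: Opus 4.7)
The plan is to first establish the pointwise exponential bound on $Y_{k-1}^i$ by the standard product-to-exponential trick combined with an integral lower bound on the partial sum $\sum_{j=k}^{i}\eta_j$, and then to use this bound inside a sum that is in turn controlled by a converging integral.

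For the first inequality, I would start from the definition $Y_{k-1}^i=\prod_{j=k}^{i}(1-\eta_j/4)$ (valid for $i\ge k$, with the convention $Y_{k-1}^{k-1}=1$) and apply $1-x\le e^{-x}$ termwise, which yields
\[
Y_{k-1}^i\ \le\ \exp\!\left(-\tfrac{1}{4}\sum_{j=k}^{i}\eta_j\right).
\]
Plugging in $\eta_j=(N+j)^{-a}$ and comparing the sum with the integral of a decreasing function, one has
\[
\sum_{j=k}^{i}(N+j)^{-a}\ \ge\ \int_{k}^{i+1}(N+t)^{-a}\,dt\ =\ \frac{(N+i+1)^{1-a}-(N+k)^{1-a}}{1-a}.
\]
Since $1/2<a<1$, we have $1/(1-a)>2$, so the prefactor $1/[4(1-a)]$ is strictly larger than $1/8$; together with the trivial bound $(N+i+1)^{1-a}\ge (N+i)^{1-a}$, this gives the stated exponential estimate.

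For the second claim, I would use the bound just proved and replace the sum by its integral majorant:
\[
\sum_{i=k}^{N}Y_{k-1}^i\eta_i\ \le\ \sum_{i=k}^{N}(N+i)^{-a}\exp\!\left(-\tfrac{1}{8}\bigl((N+i)^{1-a}-(N+k)^{1-a}\bigr)\right),
\]
and compare with
\[
\int_{k}^{N+1}(N+t)^{-a}\exp\!\left(-\tfrac{1}{8}\bigl((N+t)^{1-a}-(N+k)^{1-a}\bigr)\right)dt.
\]
Making the substitution $u=(N+t)^{1-a}$, so that $du=(1-a)(N+t)^{-a}dt$, this integral becomes
\[
\frac{1}{1-a}\int_{(N+k)^{1-a}}^{(2N+1)^{1-a}}\exp\!\left(-\tfrac{1}{8}\bigl(u-(N+k)^{1-a}\bigr)\right)du\ \le\ \frac{8}{1-a},
\]
which is a constant independent of $k$ and $N$, giving the $\mathcal{O}(1)$ bound.

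The whole argument is routine; the only subtlety to watch is checking that the integral-vs-sum comparison is in the correct direction (a lower bound for $\sum\eta_j$ in step one, an upper bound for the weighted sum in step two, both of which hold because $(N+t)^{-a}$ is decreasing) and that the constant $1/[4(1-a)]$ indeed dominates $1/8$, which follows from $a<1$. No new ideas beyond this are needed.
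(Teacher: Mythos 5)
Your argument follows the same route as the paper's proof: termwise $1-x\le e^{-x}$, then a sum--integral comparison for the decreasing function $(N+t)^{-a}$, and finally the substitution $u=(N+t)^{1-a}$ to obtain a bound independent of $k$ and $N$. The first claim is handled correctly: $\sum_{j=k}^{i}(N+j)^{-a}\ge\int_k^{i+1}(N+t)^{-a}\,dt$ is the right direction for a decreasing integrand, and the constant bookkeeping ($\tfrac{1}{4(1-a)}\ge\tfrac18$ and $(N+i+1)^{1-a}\ge(N+i)^{1-a}$) is fine.

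In the second claim, however, the comparison you wrote goes the wrong way. Writing $g(t)=(N+t)^{-a}\exp\bigl(-\tfrac18\bigl((N+t)^{1-a}-(N+k)^{1-a}\bigr)\bigr)$, which is decreasing in $t$, one has $g(i)\ge\int_i^{i+1}g(t)\,dt$, hence $\sum_{i=k}^{N}g(i)\ge\int_k^{N+1}g(t)\,dt$: the integral over $[k,N+1]$ is a \emph{minorant} of the sum, not a majorant, so the displayed inequality is false as stated --- this is exactly the ``direction'' issue you flagged as the thing to check, and it is the step where it fails. The repair is immediate and does not affect the conclusion: use $g(i)\le\int_{i-1}^{i}g(t)\,dt$, so $\sum_{i=k}^{N}g(i)\le\int_{k-1}^{N}g(t)\,dt$ (equivalently, peel off the $i=k$ term, which is at most $1$, and integrate over $[k,N]$); the same substitution then yields a bound of the form $1+\tfrac{8}{1-a}$, still $\mathcal{O}(1)$ uniformly in $k$ and $N$. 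This shift-by-one is precisely how the paper's proof proceeds, bounding each summand by the integrand evaluated at $i-1$ before integrating.
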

\begin{proof}
First, using the fact that $1-x\leq \exp(-x)$, we obtain
\begin{align*}
 Y_{k-1}^i=\prod_{j=k}^i\left(1-\frac{\eta_j}{4}\right)\leq& \prod_{j=k}^i\exp\left(-\frac{\eta_j}{4}\right)= \exp\left(-\sum_{j=k}^i\frac{(N+j)^{-a}}{4}\right)\\
 \leq & \exp\left(-\int_{k}^i\frac{(N+j)^{-a}}{8}\right)dj\\
 \leq & \exp\left(-\frac{1}{8}\left((N+i)^{1-a}-(N+k)^{1-a}\right)\right),
\end{align*}
which proves the first claim. Next, we have
\begin{align*}
    \sum_{i=k}^NY^i_{k-1}\eta_i\leq & \sum_{i=k}^N\exp\left(-\frac{1}{8}\left((N+i)^{1-a}-(N+k)^{1-a}\right)\right)(N+i)^{-a}\\
    =& \exp\left(\frac{(N+k)^{1-a}}{8}\right)\sum_{i=k+N}^{2N}\exp\left(-\frac{i^{1-a}}{8}\right)i^{-a}\\
    \leq & \exp\left(\frac{(N+k)^{1-a}}{8}\right)\int_{k+N}^{2N}\exp\left(-\frac{(i-1)^{1-a}}{8}\right)(i-1)^{-a}di\\
    = & \frac{\exp\left(\frac{(N+k)^{1-a}}{8}\right)}{1-a}\int_{(k+N-1)^{1-a}}^{(2N-1)^{1-a}}\exp\left(-u\right)du\\
    \leq & \frac{\exp\left(\frac{(N+k)^{1-a}}{8}-\frac{(N+k-1)^{1-a}}{8}\right)}{1-a}\\
    \leq & \frac{e}{1-a},
\end{align*}
which proves the second claim.
\end{proof}
\section{Proofs for the State-independent Case}
Before proving the proof we present some notations. Recall that,
\begin{align*}
    \phi(\theta,z)=\min_{y\in\Theta}\left(\left\langle z,y-\theta\right\rangle+\frac{\beta}{2}\|y-\theta\|_2^2\right),
\end{align*}
and
\begin{align*}
    y_k'=\argmin_{y\in\Theta}\left\lbrace\left\langle z_k,y-\theta_k\right\rangle+\frac{\beta}{2}\norm{y-\theta_k}_2^2\right\rbrace.
\end{align*}
For a given $\theta$, and $z$, we introduce the following notation for convenience.
\begin{align*}
    H(y)=\left\langle z,y-\theta\right\rangle+\frac{\beta}{2}\|y-\theta\|_2^2
\end{align*}
Noting that $H(y)$ is $\beta$-strongly convex, we have,
\begin{align*}
    \frac{\beta}{2}\|y_k-y_k'\|_2^2\leq H(y_k)-H(y_k').
\end{align*}
We choose the parameters of Algorithm~\ref{alg:icg} such that $$H(y_k)-H(y_k')\leq \delta_k^2.$$ The specific choice of $\delta_k$ will be set later. 
Let us define the following merit function.
\begin{align*}
    W(\theta,z)=(f(\theta)-f^*)-\phi(\theta,z)+\alpha_1\norm{\nabla f(\theta)-z}_2^2, \quad \alpha_1>0,\numberthis\label{eq:merithommc}
\end{align*}
where $f^* > -\infty$ is a uniform lower bound on the function $f$. We also need the following result from \cite{andrieu2005stability} on mixing properties of the data under Assumption~\ref{as:noise}~(a).
\begin{lemma}\cite{andrieu2005stability}\label{lm:expmixing}
Let Assumption~\ref{as:noise}~(a) be true. Then, for any $\theta\in\Theta$, the chain $\{x_k\}_k$ is exponentially mixing in the sense of Definition~\ref{eq:mixing}. 
\end{lemma}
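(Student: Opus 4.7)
The plan is to invoke the classical machinery for geometrically ergodic Markov chains. Fix $\theta\in\Theta$ and observe that Assumption~\ref{as:noise}(a) supplies all three ingredients of the standard Foster--Lyapunov story: (i) a drift inequality $P_\theta^l \mathcal{V}^{\alpha_0}(x)\le \lambda \mathcal{V}^{\alpha_0}(x)+b\mathbf{1}_C(x)$ of geometric type toward the set $C$, (ii) a minorization $P_\theta^l(x,\cdot)\ge \delta\nu(\cdot)$ on $C$, which makes $C$ an $(l,\delta\nu)$-small set, and (iii) irreducibility and aperiodicity of $P_\theta$. These are precisely the hypotheses of the $\mathcal{V}$-uniform ergodicity theorems in Meyn--Tweedie (e.g.\ Theorem~15.0.1 / 16.0.1), so the existence of a unique stationary distribution $\pi_\theta$ and a geometric rate of convergence in the $\mathcal{V}^{\alpha_0}$-weighted total variation norm follow directly.

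More concretely, I would proceed in three steps. First, iterate the drift inequality to obtain $P_\theta^{kl}\mathcal{V}^{\alpha_0}(x)\le \lambda^k \mathcal{V}^{\alpha_0}(x)+b/(1-\lambda)$, which combined with the one-step bound $P_\theta\mathcal{V}^{\alpha_0}\le \kappa \mathcal{V}^{\alpha_0}$ controls excursions of the chain away from $C$ over arbitrary horizons. Second, use the minorization on $C$ to construct a coupling (or equivalently a regeneration/split-chain decomposition) showing that, each time the chain enters $C$, with probability at least $\delta$ two independent copies coalesce after $l$ additional steps; the drift then ensures that return times to $C$ have exponential moments in $\mathcal{V}^{\alpha_0}$. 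Combining these yields a bound of the form
\begin{equation*}
\sup_{|g|\le \mathcal{V}^{\alpha_0}} \left| P_\theta^n g(x) - \pi_\theta(g)\right| \;\le\; C_\theta\,\rho^{\,n}\,\mathcal{V}^{\alpha_0}(x),
\end{equation*}
for some $\rho\in(0,1)$ and $C_\theta<\infty$ depending only on $\lambda,b,\delta,l,\kappa$. Third, I would downgrade to the $\mathcal{V}$-weighted norm appearing in Definition~\eqref{eq:mixing}: since $\mathcal{V}\ge 1$ and $\alpha_0\ge 2$ we have $\mathcal{V}\le \mathcal{V}^{\alpha_0}$, so any test function bounded by $\mathcal{V}$ is also bounded by $\mathcal{V}^{\alpha_0}$, which gives the desired $\|P^n(x,\cdot)-\pi\|_{\mathcal{V}}\le C\exp(-rn)$ with $r=-\log\rho$.

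The only substantive work is bookkeeping: translating the abstract $\mathcal{V}^{\alpha_0}$-uniform ergodicity statement into the specific weighted supremum norm used in the paper, and checking that the constants $C,r$ are finite for every fixed $\theta$ (they need not be uniform in $\theta$ for this lemma, since Definition~\eqref{eq:mixing} is stated pointwise in $\theta$). I expect the main obstacle, if any, to be notational alignment between the norm $\|g\|_{\mathcal V}$ defined in Section~2.2 on vector-valued functions and the induced norm on signed measures that appears implicitly in~\eqref{eq:mixing}; once one interprets $\|P^n(x,\cdot)-\pi\|_{\mathcal V}$ as $\sup_{\|g\|_{\mathcal V}\le 1}|P^n g(x)-\pi(g)|$, the argument is essentially a direct citation of the classical geometric ergodicity theorem combined with the monotonicity $\mathcal{V}\le \mathcal{V}^{\alpha_0}$.
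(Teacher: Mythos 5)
The paper offers no proof of this lemma at all---it is invoked directly as a citation to \cite{andrieu2005stability}---and your sketch is exactly the standard drift-plus-minorization (Foster--Lyapunov / Meyn--Tweedie geometric ergodicity) argument that underlies that citation, so your approach is the same as the paper's. The only caveat is cosmetic and shared with the paper: the classical conclusion carries a state-dependent weight $\mathcal{V}(x)$ (or $\mathcal{V}^{\alpha_0}(x)$, as in your displayed bound) on the right-hand side, which both your final sentence and the paper's Definition of exponential mixing quietly drop.
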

\begin{proof}[Proof of Theorem~\ref{th:mainthmhomog}]
First we establish recursion relations on the three components of $W(\theta,z)$: $(f(\theta)-f^*)$, $\phi(\theta,z)$, and $\alpha_1\norm{\nabla f(\theta)-z}_2^2$. Using \eqref{eq:lipgrad}, Assumption~\ref{aspt:constraint}, Young's inequality,
\begin{align*}
   f(\theta_{k+1})-f(\theta_{k})\leq& \nabla f(\theta_k)^\top (\theta_{k+1}-\theta_k)+\frac{L_G}{2}\norm{\theta_{k+1}-\theta_k}_2^{2}\\
    =&\eta_{k+1}\nabla f(\theta_k)^\top (y_k'-\theta_{k})+\eta_{k+1}(\nabla f(\theta_k)-z_k)^\top (y_k-y_k')\\
    &+\eta_{k+1}(z_k+\beta (y_k'-\theta))^\top (y_k-y_k')-\eta_{k+1}\beta\inprod{y_k'-\theta_k,y_k-y_k'}+\frac{L_G\cD_\Theta^2\eta_{k+1}^2}{2}\\
    \leq& \eta_{k+1}\left(H(y_k)-H(y_k')-\frac{\beta}{2}\norm{y_k-y_k'}_2^2\right)+\frac{\eta_{k+1}\beta}{16}\norm{y_k'-\theta_k}_2^2+4\eta_{k+1}\beta\norm{y_k-y_k'}_2^2\\
    &+\frac{L_G\cD_\Theta^2\eta_{k+1}^2}{2}+\frac{\eta_{k+1}\beta}{16}\norm{\nabla f(\theta_k)-z_k}_2^2 +\frac{4\eta_{k+1}}{\beta}\norm{y_k-y_k'}_2^2+\eta_{k+1}\nabla f(\theta_k)^\top (y_k'-\theta_{k})\\
     \leq & \eta_{k+1}\left(H(y_k)-H(y_k')\right)+\frac{\eta_{k+1}\beta}{16}\norm{y_k'-\theta_k}_2^2+4\eta_{k+1}\beta\norm{y_k-y_k'}_2^2+\frac{L_G\cD_\Theta^2\eta_{k+1}^2}{2}\\
    &+\frac{\eta_{k+1}\beta}{16}\norm{\nabla f(\theta_k)-z_k}_2^2 +\frac{4\eta_{k+1}}{\beta}\norm{y_k-y_k'}_2^2+\eta_{k+1}\nabla f(\theta_k)^\top (y_k'-\theta_{k}).\numberthis\label{eq:fprogression}
\end{align*}
Using \eqref{eq:philipgrad},
\begin{align*}
    &\phi(\theta_k,z_k)-\phi(\theta_{k+1},z_{k+1})\\
    \leq& \inprod{z_k+\beta(y_k'-\theta_k),\theta_{k+1}-\theta_k}-\inprod{y_k'-\theta_k,z_{k+1}-z_k}+\frac{L_\phi}{2}\left[\norm{\theta_{k+1}-\theta_k}_2^2+\norm{z_{k+1}-z_k}_2^2\right]\\
    \leq& \eta_{k+1}\inprod{z_k+\beta(y_k'-\theta_k),y_k'-\theta_k}+\eta_{k+1}\inprod{z_k+\beta(y_k'-\theta_k),y_k-y_k'}-\inprod{y_k'-\theta_k,z_{k+1}-z_k}\\
    &+\frac{L_\phi}{2}\left[\norm{\theta_{k+1}-\theta_k}_2^2+\norm{z_{k+1}-z_k}_2^2\right]\\
    \leq& \eta_{k+1}\left(H(y_k)-H(y_k')-\frac{\beta}{2}\norm{y_k-y_k'}_2^2\right)-\eta_{k+1}\inprod{y_k'-\theta_k,\nabla F(\theta_k,x_{k+1})}\\
    &+\eta_{k+1}\inprod{y_k'-\theta_k,z_k}+\frac{L_\phi}{2}\left[\norm{\theta_{k+1}-\theta_k}_2^2+\norm{z_{k+1}-z_k}_2^2\right]\\
    \leq& -\eta_{k+1}\beta\norm{y_k'-\theta_k}_2^2+\eta_{k+1}\left(H(y_k)-H(y_k')\right)-\eta_{k+1}\inprod{y_k'-\theta_k,\nabla f(\theta_k)}\\
    &-\eta_{k+1}\inprod{y_k'-\theta_k,\xi_{k+1}(\theta_k,x_{k+1})}+\frac{L_\phi}{2}\left[\norm{\theta_{k+1}-\theta_k}_2^2+\norm{z_{k+1}-z_k}_2^2\right]
    .\numberthis\label{eq:phiprogression}
\end{align*}
Recall $\Gamma_i$ defined in \eqref{eq:gammadef}. Then
\begin{align*}
    \nabla f(\theta_{i})-z_{i}=&\frac{\Gamma_i}{\Gamma_{i-1}}\left(\nabla f(\theta_{i-1})-z_{i-1}\right)+\left(\nabla f(\theta_i)-\nabla f(\theta_{i-1})\right)-\eta_i\left(\tilde{\epsilon}_{i}+\nabla f(\theta_{i-1})-\nabla f(\theta_{i-1})\right)\\
    =&(1-\eta_{i})\left(\nabla f(\theta_{i-1})-z_{i-1}\right)+\frac{\eta_i}{\eta_i}\left(\nabla f(\theta_i)-\nabla f(\theta_{i-1})\right)-\eta_i\xi_{i}.
\end{align*}
Using Jensen's inequality, 
\begin{align*}
    \norm{\nabla f(\theta_{i})-z_{i}}_2^2\leq &(1-\eta_{i})\norm{\nabla f(\theta_{i-1})-z_{i-1}}_2^2+\frac{1}{\eta_i}\norm{\nabla f(\theta_i)-\nabla f(\theta_{i-1})}_2^2+\eta_i^2\norm{\xi_{i}}_2^2\\
    &-2\eta_i\inprod{\xi_i,(1-\eta_{i})\left(\nabla f(\theta_{i-1})-z_{i-1}\right)+\left(\nabla f(\theta_i)-\nabla f(\theta_{i-1})\right)}\\
    \leq &(1-\eta_{i})\norm{\nabla f(\theta_{i-1})-z_{i-1}}_2^2+2L_G^2\eta_i\norm{y_{i-1}'-\theta_{i-1}}_2^2+2L_G^2\eta_i\norm{y_{i-1}-y_{i-1}'}_2^2\\
    &+\eta_i^2\norm{\xi_{i}}_2^2-2\eta_i\inprod{\xi_i,(1-\eta_{i})\left(\nabla f(\theta_{i-1})-z_{i-1}\right)+\left(\nabla f(\theta_i)-\nabla f(\theta_{i-1})\right)}. \numberthis\label{eq:wcomp3recursion}
\end{align*}
Combining \eqref{eq:fprogression}, \eqref{eq:phiprogression}, and \eqref{eq:wcomp3recursion} we have,
\begin{align*}
    &W(\theta_{k+1},z_{k+1})-W(\theta_{k},z_{k})\\
    =&f(\theta_{k+1})-f(\theta_{k})-\phi(\theta_{k+1},z_{k+1})+\phi(\theta_k,z_k)+\alpha_1\norm{\nabla f(\theta_{k+1})-z_{k+1}}_2^2-\alpha_1\norm{\nabla f(\theta_{k})-z_{k}}_2^2\\
    \leq& 2\eta_{k+1}\left(H(y_k)-H(y_k')\right)-\frac{15\alpha_1\eta_{k+1}}{16}\norm{\nabla f(\theta_k)-z_k}_2^2-\left(\frac{15\beta\eta_{k+1}}{16}-2\alpha_1 L_G^2\eta_{k+1}\right)\norm{y_k'-\theta_k}_2^2\\
    &+\eta_{k+1}\left(4\beta+4/\alpha_1+2L_G^2\alpha_1\right)\|y_k-y_k'\|_2^2\\
    &+\eta_{k+1}^2\left(\frac{L_GD_\Theta^2}{2}+\frac{L_\phi D_\Theta^2}{2}+\norm{z_k-\nabla F(\theta_k,x_{k+1})}_2^2+\norm{\xi_{k+1}(\theta_k,x_{k+1})}_2^2+2\norm{\xi_{k+1}(\theta_k,x_{k+1})}_2\norm{\nabla f(\theta_{k})-z_k}_2\right)\\
    &-\eta_{k+1}\inprod{y_k'-\theta_k,\xi_{k+1}(\theta_k,x_{k+1})}-2\eta_{k+1}\inprod{\xi_{k+1}(\theta_k,x_{k+1}),\nabla f(\theta_{k+1})-z_k}.
\end{align*}
Rearranging, and choosing $\alpha_1=\beta/(32L_G^2)$ we get,
\begin{align*}
    &\frac{14\beta\eta_{k+1}}{16}\norm{y_k'-\theta_k}_2^2+\frac{15\beta\eta_{k+1}}{512L_G^2}\norm{\nabla f(\theta_k)-z_k}_2^2 \leq W(\theta_{k},z_{k})-W(\theta_{k+1},z_{k+1})\\&~~+\left(4/\beta+4\beta+4/\alpha_1+2L_G^2\alpha_1\right)\eta_{k+1}\delta_{k}^2
    +\eta_{k+1}^2U_{k}-\eta_{k+1}S_k-\eta_{k+1}Q_k, \numberthis\label{eq:finalboundintermed}
\end{align*}
where 
\begin{align*}
U_k&=\frac{L_GD_\Theta^2}{2}+\frac{L_\phi D_\Theta^2}{2}+\norm{z_k-\nabla F(\theta_k,x_{k+1})}_2^2+\norm{\xi_{k+1}(\theta_k,x_{k+1})}_2^2\\
&~~~~~~+2\norm{\xi_{k+1}(\theta_k,x_{k+1})}_2\norm{\nabla f(\theta_{k})-z_k}_2,\\ S_k&=\inprod{y_k'-\theta_k,\xi_{k+1}(\theta_k,x_{k+1})},\\
Q_k&=2\inprod{\xi_{k+1}(\theta_k,x_{k+1}),\nabla f(\theta_{k+1})-z_k}.
\end{align*}
Taking expectation on both sides and summing from $k=0$ to $k=N$, we get,
\begin{align*}
    &\sum_{k=0}^{N}\expec{\frac{14\beta\eta_{k+1}}{16}\norm{y_k'-\theta_k}_2^2+\frac{15\beta\eta_{k+1}}{512L_G^2}\norm{\nabla f(\theta_k)-z_k}_2^2}{}\leq W(\theta_{0},z_{0})\\
    +&\sum_{k=0}^N\left(4/\beta+4\beta+4/\alpha_1+2L_G^2\alpha_1\right)\eta_{k+1}\delta_{k}^2
    +\sum_{k=0}^N\eta_{k+1}^2\expec{U_{k}}{}-\sum_{k=0}^N\eta_{k+1}(\expec{S_k}{}+\expec{Q_k}{}),\numberthis\label{eq:hommcintermedbound}
\end{align*}
\textbf{Bound on $\expec{U_k}{}:$} Similar to \eqref{eq:expeczkbound}, we have $\expec{\norm{z_k}_2}{}\leq \sigma$. Using \L continuity of $f(\cdot)$, as explained in Section ~\ref{sec:assumption}, we have $\nabla f(\theta_k)\leq L$. Combining these with Assumption~\ref{as:finitenoisevar}, we have,
\begin{align*}
    \expec{U_k}{}=\mathcal{O}(1) \numberthis\label{eq:Ukbound}
\end{align*}
\textbf{Bound on $\expec{S_k}{}:$}
Using the fact that $\expec{\inprod{y_{k-l}'-\theta_{k-l},\xi_{k+1}(\theta_{k-l},x)}}{x\sim\pi}=0$, for $l\in\{1,\cdots,k-1\}$, we have
\begin{align*}
    \expec{S_k|\cF_{k-l}}{}=&\expec{\inprod{y_k'-\theta_k,\xi_{k+1}(\theta_k,x_{k+1})}|\cF_{k-l}}{}-\expec{\inprod{y_{k}'-\theta_{k},\xi_{k+1}(\theta_{k-l},x_{k+1})}|\cF_{k-l}}{}\\
    &+\expec{\inprod{y_k'-\theta_k,\xi_{k+1}(\theta_{k-l},x_{k+1})}|\cF_{k-l}}{}-\expec{\inprod{y_{k-l}'-\theta_{k-l},\xi_{k+1}(\theta_{k-l},x_{k+1})}|\cF_{k-l}}{}\\
    &+\expec{\inprod{y_{k-l}'-\theta_{k-l},\xi_{k+1}(\theta_{k-l},x_{k+1})}|\cF_{k-l}}{}-\expec{\inprod{y_{k-l}'-\theta_{k-l},\xi_{k+1}(\theta_{k-l},x)}}{x\sim\pi}\\
    =&\expec{\inprod{y_k'-\theta_k,\sum_{i=k-l+1}^k(\xi_{k+1}(\theta_i,x_{k+1})-\xi_{k+1}(\theta_{i-1},x_{k+1})}|\cF_{k-l}}{}\\
    &+\expec{\inprod{\sum_{i=k-l+1}^k(y_i'-\theta_i-y_{i-1}'+\theta_{i-1}),\xi_{k+1}(\theta_{k-l},x_{k+1})}|\cF_{k-l}}{}\\
    &+\inprod{y_{k-l}'-\theta_{k-l},\expec{\xi_{k+1}(\theta_{k-l},x_{k+1})|\cF_{k-l}}{}}-\inprod{y_{k-l}'-\theta_{k-l},\expec{\xi_{k+1}(\theta_{k-l},x)}{x\sim\pi}}\\
     =&\expec{\norm{y_k'-\theta_k}_2\sum_{i=k-l+1}^k\eta_i\norm{y_{i-1}-\theta_{i-1}}_2|\cF_{k-l}}{}\\
    &+\expec{\sum_{i=k-l+1}^k\left(\norm{z_i-z_{i-1}}_2/\beta+2\norm{\theta_i-\theta_{i-1}}_2\right)\norm{\xi_{k+1}(\theta_{k-l},x_{k+1})}_2|\cF_{k-l}}{}\\
    &+\norm{y_{k-l}'-\theta_{k-l}}_2\norm{\expec{\xi_{k+1}(\theta_{k-l},x_{k+1})|\cF_{k-l}}{}-\expec{\xi_{k+1}(\theta_{k-l},x)}{x\sim\pi}}_2.\numberthis\label{eq:Skintermedbound}
\end{align*}
Using Assumption~\ref{aspt:constraint} one has,
\begin{align*}
    \expec{\norm{y_k'-\theta_k}_2\sum_{i=k-l+1}^k\eta_i\norm{y_{i-1}-\theta_{i-1}}_2}{}=\mathcal{O}(l\eta_{k-l+1}).\numberthis\label{eq:bias1bound}
\end{align*}
Using Assumption~\ref{aspt:constraint}, Assumption~\ref{as:finitenoisevar},  $z_{k+1}-z_k=\eta_{k+1}(\nabla F(\theta_{k},x_{k+1})-z_k)$, and  $\expec{\|z_k\|_2}{}\leq \sigma$ one has that
\begin{align*}
    \expec{\sum_{i=k-l+1}^k\left(\norm{z_i-z_{i-1}}_2/\beta+2\norm{\theta_i-\theta_{i-1}}_2\right)\norm{\xi_{k+1}(\theta_{k-l},x_{k+1})}_2}{}=\mathcal{O}\left(l\eta_{k-l+1}\right).\numberthis\label{eq:bias2bound}
\end{align*}
Using Assumption~\ref{aspt:constraint}, Assumption~\ref{as:noise}, Lemma~\ref{lm:expmixing}, \eqref{eq:mixing}, and \L continuity of $f(\cdot)$, we have,
\begin{align*}
    \norm{y_{k-l}'-\theta_{k-l}}_2\norm{\expec{\xi_{k+1}(\theta_{k-l},x_{k+1})|\cF_{k-l}}{}-\expec{\xi_{k+1}(\theta_{k-l},x)}{x\sim\pi}}_2\leq \mathcal{O}(exp(-rl)), \numberthis\label{eq:mixingbias}
\end{align*}
where $r$ is as in ~\eqref{eq:mixing}.
Combining \eqref{eq:bias1bound}, \eqref{eq:bias2bound}, and \eqref{eq:mixingbias} with \eqref{eq:Skintermedbound} we get, 
\begin{align*}
    \expec{S_k}{}=\mathcal{O}(l\eta_{k-l+1}+exp(-rl)). \numberthis\label{eq:Skbound}
\end{align*}

\vspace{0.1in}
\noindent \textbf{Bound on $\expec{Q_k}{}$:} Following similar techniques used to establish bound on $\expec{S_k}{}$, we have,
\begin{align*}
    \expec{Q_k}{}=\mathcal{O}(l\eta_{k-l+1}+exp(-rl)) \numberthis\label{eq:Qkbound}
\end{align*}
Combining \eqref{eq:Ukbound}, \eqref{eq:Skbound}, and \eqref{eq:Qkbound} with \eqref{eq:finalboundintermed}, choosing $t_k=\ceil{\sqrt{k}}$ to ensure $\delta_k^2=\mathcal{\eta_k}$, setting $l=\ceil{\frac{\log (1/\eta_{k-l+1})}{r}}$, and choosing $\eta_k=(N+k)^{-a}$, $1/2<a<1$, we get,
\begin{align*}
    \sum_{k=0}^{N}\expec{\frac{14\beta\eta_{k+1}}{16}\norm{y_k'-\theta_k}_2^2+\frac{15\beta\eta_{k+1}}{512L_G^2}\norm{\nabla f(\theta_k)-z_k}_2^2}{}
    \leq W(\theta_{0},z_{0})+\mathcal{O}\left(N^{1-2a}\log N\right),\numberthis\label{eq:hommcintermedbound2}
\end{align*}
Dividing both sides by $\sum_{k=0}^{N}\eta_k$, and choosing $a=1/2$ we get,
\begin{align*}
    \expec{V(\theta_R,z_R)}{}=\mathcal{O}\left(\frac{\log N}{\sqrt{N}}\right).
\end{align*}
\end{proof}

\subsection*{Acknowledgements}
AR was affiliated with the Department of Statistics, UC Davis while this work was completed and was partially supported by National Science Foundation (NSF) grant CCF-1934568. KB was partially supported by a seed grant from the Center for Data Science and Artificial Intelligence Research, UC Davis and NSF Grant-2053918. SG was partially supported by an NSERC Discovery Grant. 
\bibliographystyle{alpha}
\bibliography{markov_asa}

\end{document}